\title{The Stembridge Equality for Skew Stable Grothendieck Polynomials and Skew Dual Stable Grothendieck Polynomials}
\author{Fiona Abney-McPeek, Serena An, and Jakin Ng}
\date{\today}
\theoremstyle{definition}
\newtheorem{definition}{Definition}[section]
\newtheorem{example}[definition]{Example}
\newtheorem{remark}[definition]{Remark}
\theoremstyle{plain} 
\newtheorem{theorem}[definition]{Theorem}
\newtheorem{lemma}[definition]{Lemma}
\newtheorem{corollary}[definition]{Corollary}
\newtheorem{proposition}[definition]{Proposition}
\DeclareMathOperator{\Par}{Par}
\renewcommand*\env@matrix[1][\arraystretch]{%
  \edef\arraystretch{#1}%
  \hskip -\arraycolsep
  \let\@ifnextchar\new@ifnextchar
  \array{*\c@MaxMatrixCols c}}
\begin{document}
\maketitle

\begin{abstract}
    
    The Schur polynomials $s_{\lambda}$ are essential in understanding the representation theory of the general linear group. They also describe the cohomology ring of the Grassmannians. For $\rho = (n, n-1, \dots, 1)$ a staircase shape and $\mu \subseteq \rho$ a subpartition, the Stembridge equality states that $s_{\rho/\mu} = s_{\rho/\mu^T}$. This equality provides information about the symmetry of the cohomology ring. The stable Grothendieck polynomials $G_{\lambda}$, and the dual stable Grothendieck polynomials $g_{\lambda}$, developed by Buch, Lam, and Pylyavskyy, are variants of the Schur polynomials and describe the $K$-theory of the Grassmannians. Using the Hopf algebra structure of the ring of symmetric functions and a generalized Littlewood-Richardson rule, we prove that $G_{\rho/\mu} = G_{\rho/\mu^T}$ and $g_{\rho/\mu} = g_{\rho/\mu^T}$, the analogues of the Stembridge equality for the skew stable and skew dual stable Grothendieck polynomials.

\end{abstract}

\section{Introduction}

In this paper, we prove a Stembridge-type equality for skew stable Grothendieck polynomials and skew dual stable Grothendieck polynomials, namely
\[
G_{\rho/\mu} = G_{\rho/\mu^T},\qquad g_{\rho/\mu} = g_{\rho/\mu^T},
\] where $\rho = (n, n-1, \dots, 1)$ is the staircase partition. 

The \textit{stable Grothendieck polynomials} $G_\lambda$ are $K$-theoretic analogues of the Schur polynomials $s_{\lambda}$, i.e., they provide information about the $K$-theory of the Grassmanian. These formal power series were introduced by Fomin and Kirillov \cite{fomin}. 

In \cite{buch}, Buch gave a combinatorial definition of the skew stable Grothendieck polynomials $G_{\lambda/\mu}$ using set-valued tableaux of shape $\lambda/\mu$, which are certain fillings of the skew Young diagram of the shape $\lambda/\mu$ with sets of positive integers.
The \textit{dual stable Grothendieck polynomials} $g_{\lambda}$, first introduced by Lam and Pylyavskyy in \cite{LP}, are dual to the $G_{\lambda}$'s under the Hall inner product. The formal power series $g_{\lambda/\mu}$ is defined using reverse plane partitions, which are certain fillings of the skew Young diagram of the shape $\lambda/\mu$ with positive integers.

The skew stable and skew dual stable Grothendieck polynomials can be viewed as deformations of the Schur polynomials in that their lowest and highest degree parts, respectively, are the Schur polynomials. Thus, it is natural to ask whether certain identities for the Schur polynomials can be extended to this context. For instance, it was conjectured in \cite[Conjecture 6.2]{involve} that there are analogues for $g_{\rho/\mu}$ and $G_{\rho/\mu}$ of the Stembridge equality \cite[Corollary 7.32]{reiner}, which states that
\[
s_{\rho/\mu} = s_{\rho/\mu^T},
\] for $\rho = (n, n-1, \dots, 1)$. 
In this paper, we verify that the conjectures hold. More precisely, we prove that there is a Stembridge-type equality for the skew stable and skew dual stable Grothendieck polynomials, thereby exhibiting additional symmetries on the $K$-theory of the Grassmanians.

\subsection{Outline of the paper}

In Section 2, we begin by going over the basics of symmetric functions, Schur polynomials, and skew stable and skew dual stable Grothendieck polynomials. Then, we state the problem and introduce the Hopf algebraic structure of the ring of symmetric functions $\Lambda$. 

In Section 3.1, we give a combinatorial proof of Lemma \ref{straight shape}, the Stembridge equality for skew dual stable Grothendieck polynomials $g_{\rho/\mu}$ in the special case where $\mu = (k)$, using a generalized Littlewood-Richardson rule for the stable Grothendieck polynomials proven by Buch \cite{buch}.
\begin{lemma}\label{straight shape}
Let $\rho=(n,n-1,\ldots,1)$ be the staircase partition, and $\mu = (k)$ where $k \le n$. Then,
\begin{align*}
    g_{\rho/\mu} = g_{\rho/{\mu}^T}.
\end{align*}
\end{lemma}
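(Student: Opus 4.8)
The plan is to pass through the Hopf-algebra duality between the $G_\lambda$ and the $g_\lambda$ set up in Section~2. Since $\{G_\lambda\}$ and $\{g_\lambda\}$ are dual bases of $\Lambda$ under the Hall inner product, and the skewing operation $g_\lambda \mapsto g_{\lambda/\mu}$ is adjoint to multiplication by $G_\mu$, one obtains the expansion
\[
g_{\rho/\mu} \;=\; \sum_{\nu} c^{\rho}_{\mu\nu}\, g_{\nu},
\qquad\text{where}\qquad
G_\mu\, G_\nu \;=\; \sum_{\lambda} c^{\lambda}_{\mu\nu}\, G_\lambda .
\]
Because the $g_\nu$ are linearly independent, proving $g_{\rho/(k)} = g_{\rho/(1^k)}$ is equivalent to proving the identity of structure constants $c^{\rho}_{(k),\nu} = c^{\rho}_{(1^k),\nu}$ for every partition $\nu$. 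This converts an identity of power series into a family of integer identities that can be attacked combinatorially.

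Next I would invoke Buch's generalized Littlewood--Richardson rule \cite{buch}, which expresses each $c^{\lambda}_{\mu\nu}$ as a signed count of set-valued tableaux subject to a reverse-lattice (Yamanouchi) reading-word condition, the sign being $(-1)^{|\lambda|-|\mu|-|\nu|}$. Since $|(k)| = |(1^k)| = k$, the signs attached to $c^{\rho}_{(k),\nu}$ and $c^{\rho}_{(1^k),\nu}$ are identical for each fixed $\nu$, so it suffices to exhibit a content-preserving bijection between the two underlying families of set-valued Littlewood--Richardson tableaux. The crucial geometric input is that $\rho$ is self-conjugate, so the two relevant skew shapes are literally transposes of one another: $(\rho/(k))^{T} = \rho^{T}/(k)^{T} = \rho/(1^k)$. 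At the level of top degree this already recovers the classical single-row Stembridge identity, because in the staircase the interlacing condition $\nu_i \ge \rho_{i+1} = \rho_i - 1$ forces any horizontal strip $\rho/\nu$ to occupy distinct rows and hence to be simultaneously a vertical strip; thus the partitions $\nu$ for which $\rho/\nu$ is a horizontal $k$-strip coincide with those for which $\rho/\nu$ is a vertical $k$-strip.

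The map underlying the bijection is therefore transposition of tableaux, adapted to the set-valued setting, and the main obstacle is precisely that naive transposition does not respect Buch's reverse-lattice condition: transposing interchanges the row-weak and column-strict constraints, reverses the reading order, and need not send a Yamanouchi word to a Yamanouchi word. The heart of the argument is to show that, for these particular transpose-related staircase shapes, the set-valued corrections are rigid enough that transposition (possibly followed by a canonical rectification) descends to a content-preserving bijection of the Littlewood--Richardson tableaux. I expect verifying this compatibility --- that each set-valued filling contributing to $c^{\rho}_{(k),\nu}$ is matched, with the same content, to exactly one filling contributing to $c^{\rho}_{(1^k),\nu}$ --- to be the technical crux, with the self-conjugate ``thinness'' of the staircase being the property that makes it go through.
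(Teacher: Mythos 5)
Your algebraic reduction is sound, and it is in fact the same one the paper uses (Lemma \ref{c's to grothendieck}): from $g_{\rho/\mu} = G_\mu^{\perp} g_\rho$ and the adjointness $\langle G_\nu, G_\mu^{\perp} g_\rho\rangle = \langle G_\mu G_\nu, g_\rho\rangle$, the lemma is equivalent to the structure-constant identities $c^{\rho}_{(k)\nu} = c^{\rho}_{(1^k)\nu}$ for all $\nu$, and the signs in Buch's rule agree because $|(k)| = |(1^k)|$. The gap is in the combinatorial half, and it is twofold. First, you have misidentified the objects that Buch's rule counts. In Theorem \ref{buch}, the coefficient attached to the product $G_\nu G_\mu$ counts set-valued tableaux of the \emph{glued} shape $\mu \ast \nu$ (two straight shapes joined corner to corner) whose reverse reading word is a lattice word of \emph{content} $\lambda$. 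So the two families you must put in bijection live on the shapes $\nu \ast (k)$ and $\nu \ast (1^k)$ and carry content $\rho$; they do not live on $\rho/(k)$ and $\rho/(1^k)$ with content $\nu$. Consequently your ``crucial geometric input,'' the identity $(\rho/(k))^{T} = \rho/(1^k)$, concerns the wrong shapes: $\nu\ast(k)$ and $\nu\ast(1^k)$ are not transposes of one another (the transpose of the former is $(1^k)\ast\nu^{T}$), and the self-conjugacy of $\rho$ enters only through the content of the reading words. Set-valued tableaux of shape $\rho/\mu$ with lattice word of content $\nu$ count a genuinely different family of coefficients, namely the expansion coefficients of $G_{\rho/\mu}$ in the basis $\{G_\nu\}$ (Theorem \ref{big G buch}); in the K-theoretic setting these no longer coincide with the product structure constants, unlike the Schur case, and the paper uses them for Theorem \ref{main result big g}, not for this lemma.

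Second, even granting a correct identification of the tableaux, the proposal contains no proof of the bijection: you yourself note that transposition destroys the lattice-word condition and defer the repair (``possibly followed by a canonical rectification'') as the technical crux. That crux is exactly the content of the paper's Theorem \ref{straightshapec's}, and the paper's argument is not transposition-plus-rectification at all; it is a rigidity argument. Lemmas \ref{contents of nu} and \ref{contents of k} show that in any valid filling of $\nu\ast(k)$ or $\nu\ast(1^k)$ with content $\rho_n$, every box in row $i$ of $\nu$ is forced to equal $\{i\}$, and the $(k)$ (respectively $(1^k)$) part contains at most one copy of each value; it follows that \emph{any} arrangement of those distinct entries appended after the reading word of $\nu$ remains a lattice word, so the bijection is simply a $90^\circ$ rotation of the $\mu$-part, with $\nu$ left untouched. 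Without either the correct identification of the shapes or this rigidity input, the proposal does not yet constitute a proof.
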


In Section 3.2, we prove the Stembridge equality for skew dual stable Grothendieck polynomials for general $\mu$, as stated in the following theorem.
\begin{theorem}\label{main result small g}
Let $\rho=(n,n-1,\ldots,1)$ be the staircase partition, and $\mu\subseteq\rho$ any subpartition. Then $$g_{\rho/\mu}=g_{\rho/{\mu}^T}.$$ 
\end{theorem}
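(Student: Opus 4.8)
The plan is to reduce an arbitrary subpartition $\mu$ to the single-row case already established in Lemma~\ref{straight shape}, exploiting the Hopf-algebraic structure of $\Lambda$. Since $g$ and $G$ are dual bases for the Hall inner product, skewing is realized as an adjoint: writing $f^{\perp}$ for the operator adjoint to multiplication by $f$, one has $\langle G_\mu^{\perp} g_\rho,\, G_\nu\rangle = \langle g_\rho,\, G_\mu G_\nu\rangle$, and matching this against the combinatorial branching for reverse plane partitions gives $g_{\rho/\mu} = G_\mu^{\perp} g_\rho$. Thus the theorem is equivalent to the operator statement $G_\mu^{\perp} g_\rho = G_{\mu^T}^{\perp} g_\rho$, and the whole problem becomes one of comparing how $G_\mu^\perp$ and $G_{\mu^T}^\perp$ act on the single element $g_\rho$.

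Next I would expand these operators through single-row building blocks. The single-row functions $G_{(k)}$ have lowest-degree part $h_k$, so they are algebraically independent and generate (the completion of) $\Lambda$; consequently $G_\mu$ is a universal polynomial in the $G_{(k)}$, and one may record this through a $K$-theoretic Jacobi--Trudi-type determinant whose entries are the modified complete homogeneous gadgets $\varphi^{j} h_{k}$. Dually, the single-column functions $G_{(1^k)}$ provide the conjugate expansion, and the transpose $\mu\mapsto\mu^{T}$ should interchange the row determinant with the column determinant without altering its combinatorial shape. The aim of this step is to present $g_{\rho/\mu}$ and $g_{\rho/\mu^{T}}$ as the same determinant evaluated, respectively, in the row-skewing operators $G_{(k)}^{\perp}$ and the column-skewing operators $G_{(1^k)}^{\perp}$.

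Lemma~\ref{straight shape} then supplies the base case in the form $G_{(k)}^{\perp} g_\rho = G_{(1^k)}^{\perp} g_\rho$, i.e. a single row-skew and a single column-skew agree on the staircase. I would organize the bootstrap as an induction on the number of parts of $\mu$ (or on $|\mu|$), peeling off one row at a time via the coproduct of $g$ and feeding the single-row identity into the determinantal expansion term by term, so that the row determinant for $g_{\rho/\mu}$ collapses onto the column determinant for $g_{\rho/\mu^{T}}$. Throughout, the generalized Littlewood--Richardson rule of Buch is what makes the structure constants $d^{\kappa}_{\alpha\beta}$ in $G_\alpha G_\beta = \sum_\kappa d^{\kappa}_{\alpha\beta} G_\kappa$ explicit enough to manipulate.

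The hard part is the \emph{propagation} of the single-row equality through compositions of skewing operators. Lemma~\ref{straight shape} compares $G_{(k)}^{\perp}$ and $G_{(1^k)}^{\perp}$ only on $g_\rho$ itself, whereas the determinantal expansion applies \emph{products} of such operators, and after the first skewing one no longer sits at $g_\rho$. The crux is therefore to upgrade the base case to $\bigl(\prod_k G_{(k)}^{a_k}\bigr)^{\perp} g_\rho = \bigl(\prod_k G_{(1^k)}^{a_k}\bigr)^{\perp} g_\rho$, which requires that the $K$-theoretic corrections encoded by the operators $\varphi^{j} h_{k}$ commute and straighten compatibly with the transpose. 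Matching these correction terms under $\mu\mapsto\mu^{T}$ — not merely the leading Schur-level terms, where the classical rotation symmetry would already suffice — is the delicate point, and it is precisely the self-conjugacy $\rho^{T}=\rho$ of the staircase that I expect to force the row and column determinants to cancel in the same pattern.
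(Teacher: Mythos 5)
Your overall strategy --- reduce to the operator statement $G_\mu^{\perp}g_\rho = G_{\mu^T}^{\perp}g_\rho$ via Theorem~\ref{skewing little g}, then bootstrap from the one-row case of Lemma~\ref{straight shape} --- is the same as the paper's, but the proposal stops exactly at the decisive point instead of crossing it. You correctly identify the crux: Lemma~\ref{straight shape} says only that $G_{(k)}^{\perp}$ and $G_{(1^k)}^{\perp}$ agree \emph{at the single element} $g_\rho$, whereas any expansion of $G_\mu$ as a (possibly infinite) polynomial in one-row pieces forces you to apply \emph{compositions} of skewing operators, and after the first application you are no longer sitting at $g_\rho$. Your resolution of this is the sentence that you ``expect'' the $K$-theoretic corrections to straighten compatibly because $\rho^T=\rho$; that is a restatement of the difficulty, not an argument, so as written the proof has a genuine gap. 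Two secondary problems compound it: the $K$-theoretic Jacobi--Trudi determinant in gadgets $\varphi^{j}h_{k}$ is an external ingredient not available in the paper (and the claim that conjugation ``interchanges the row determinant with the column determinant without altering its shape'' is essentially equivalent to the existence of Yeliussizov's involution $\tau$, which you never invoke); and the proposed induction ``peeling off one row via the coproduct'' hits the same propagation obstacle at every step.

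The paper closes this gap with a trick worth internalizing: adjointness lets you verify multiplicativity \emph{without ever evaluating a skewing operator anywhere other than $g_\rho$}. Let $\tau:\hat{\Lambda}\to\hat{\Lambda}$ be the continuous ring involution with $\tau(G_\lambda)=G_{\lambda^T}$, and set $A=\{f\in\hat{\Lambda}: f^{\perp}(g_\rho)=\tau(f)^{\perp}(g_\rho)\}$. Lemma~\ref{subalgebra for arbitrary homomorphism} shows $A$ is closed under products and infinite sums; closure under products is a chain of applications of $\langle g, f^{\perp}(a)\rangle=\langle fg,a\rangle$ (Lemma~\ref{skewingProperties1}): for every test element $k\in\Lambda$,
\[
\langle k,(f_1f_2)^{\perp}(g_\rho)\rangle
=\langle f_2k,\,f_1^{\perp}(g_\rho)\rangle
=\langle f_2k,\,\tau(f_1)^{\perp}(g_\rho)\rangle
=\langle \tau(f_1)k,\,f_2^{\perp}(g_\rho)\rangle
=\langle \tau(f_1)k,\,\tau(f_2)^{\perp}(g_\rho)\rangle
=\langle k,\,\tau(f_1f_2)^{\perp}(g_\rho)\rangle,
\]
where each intermediate pairing is shuttled back to an inner product against $g_\rho$ itself using commutativity of $\Lambda$ and the fact that $\tau$ is a ring map; no knowledge of how $f_i^{\perp}$ acts away from $g_\rho$ is needed. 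With this lemma your bootstrap finishes with no determinant at all: $G_{(1^n)}\in A$ is precisely Lemma~\ref{straight shape}; hence $e_k=\sum_{n\ge k}\binom{n-1}{k-1}G_{(1^n)}\in A$ by Proposition~\ref{e as linear combo of G} and closure under infinite sums (Lemma~\ref{perp distributes over infinite sums}); hence every $e_\lambda\in A$, and since the $e_\lambda$ topologically span $\hat{\Lambda}$, in fact $A=\hat{\Lambda}$. In particular $G_\mu\in A$, which by Theorem~\ref{skewing little g} is exactly $g_{\rho/\mu}=g_{\rho/\mu^T}$.
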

We extend Lemma \ref{straight shape}, the case for $\mu = (k)$, to Theorem \ref{main result small g}, the general case, by utilizing the skewing operator $\perp$ coming from the Hopf algebraic structure of $\Lambda$, along with an involution $\tau$ of the completion $\hat{\Lambda}$ constructed by Yelliusizov in \cite[Theorem 1.1]{duality and deformations} sending $G_\mu$ to $G_{\mu^T}$.

In Section 4, we use a similar strategy to prove the Stembridge equality for skew stable Grothendieck polynomials $G_{\rho/\mu}$.
\begin{theorem}\label{main result big g}
Let $\rho=(n,n-1,\ldots,1)$ be the staircase partition, and $\mu\subseteq\rho$ any subpartition. Then $$G_{\rho/\mu}=G_{\rho/{\mu}^T}.$$ 
\end{theorem}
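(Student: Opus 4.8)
The plan is to run the argument dual to the one behind Theorem \ref{main result small g}, working with the skewing operator adjoint to multiplication by $g_\mu$. The starting point is the Hopf-algebraic identity $G_{\rho/\mu} = g_\mu^{\perp} G_\rho$, where $g_\mu^{\perp}$ is the operator adjoint to multiplication by $g_\mu$ under the Hall inner product. The crucial structural fact is that $f \mapsto f^{\perp}$ is an algebra homomorphism from $\Lambda$ into $\mathrm{End}(\hat{\Lambda})$, so the skewing operators commute and $(fg)^{\perp} = f^{\perp} g^{\perp}$. I would first record two standard facts. First, the single-row polynomials $\{g_{(k)}\}_{k \ge 1}$ generate $\Lambda$ as a polynomial algebra: since $g_{(k)} = h_k + (\text{lower-degree terms})$, they are algebraically independent generators by triangularity, so every $g_\mu$ is a finite polynomial in them. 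Second, Yeliussizov's involution $\tau$ is an algebra automorphism of $\hat{\Lambda}$ with $\tau(G_\lambda) = G_{\lambda^T}$ and $\tau(g_\lambda) = g_{\lambda^T}$ for all $\lambda$; in particular $\tau(g_{(k)}) = g_{(1^k)}$, and $\tau$ restricts to an algebra automorphism of $\Lambda$.

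The combinatorial core is the single-row base case
\[
G_{\rho/(k)} = G_{\rho/(1^k)}, \qquad 1 \le k \le n,
\]
the $G$-analogue of Lemma \ref{straight shape}. I would prove this directly, by constructing a weight-preserving bijection between the set-valued tableaux of shape $\rho/(k)$ and those of shape $\rho/(1^k)$, using Buch's generalized Littlewood--Richardson rule \cite{buch} exactly as in the proof of Lemma \ref{straight shape}. This is the one place where the staircase shape is genuinely used: the equality $G_{\rho/(k)} = G_{\rho/(1^k)}$ fails for general $\rho$, and I expect the construction and verification of this bijection on set-valued (rather than ordinary) tableaux to be the main obstacle of the proof.

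Granting the base case, the extension to arbitrary $\mu \subseteq \rho$ is purely formal, and — notably — requires nothing further about the staircase. Write $g_\mu = P_\mu\!\big(g_{(1)}, g_{(2)}, \dots\big)$ as a finite polynomial in the single-row generators. Applying the algebra automorphism $\tau$ gives $g_{\mu^T} = \tau(g_\mu) = P_\mu\!\big(g_{(1^1)}, g_{(1^2)}, \dots\big)$, the same polynomial evaluated at the single-column generators. Hence it suffices to prove, for each monomial $\prod_j g_{(k_j)}$ appearing in $P_\mu$, that $\big(\prod_j g_{(k_j)}^{\perp}\big) G_\rho = \big(\prod_j g_{(1^{k_j})}^{\perp}\big) G_\rho$. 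I would establish this by changing one factor at a time and telescoping: using commutativity of the skewing operators, permute the factor being changed so that it acts on $G_\rho$ first, apply the base case in the form $g_{(k_i)}^{\perp} G_\rho = g_{(1^{k_i})}^{\perp} G_\rho$, and then reapply the remaining operators, which are identical on the two sides. Summing over the monomials of $P_\mu$ yields $g_\mu^{\perp} G_\rho = g_{\mu^T}^{\perp} G_\rho$, that is, $G_{\rho/\mu} = G_{\rho/\mu^T}$. The signs occurring in $P_\mu$ cause no difficulty, since every step of the telescoping is linear.
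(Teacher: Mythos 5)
Your reduction of the general case to the single-row case has the right overall shape, but it rests on a false identity. For stable Grothendieck polynomials, skewing $G_\rho$ by $g_\mu$ does \emph{not} produce $G_{\rho/\mu}$: the coproduct is $\Delta(G_\rho) = \sum_{\nu \subseteq \rho} G_\nu \otimes G_{\rho//\nu}$ (Buch, Example 6.8), where $G_{\rho//\nu} = \sum_{\sigma} (-1)^{|\nu/\sigma|} G_{\rho/\sigma}$ is summed over $\sigma$ such that $\nu/\sigma$ is a rook strip. Hence $g_\mu^{\perp}(G_\rho) = G_{\rho//\mu}$, not $G_{\rho/\mu}$. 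Already for $\mu = (1)$ your starting identity fails: $g_{(1)}^{\perp}(G_\rho) = G_{\rho//(1)} = G_{\rho/(1)} - G_\rho \neq G_{\rho/(1)}$. (Contrast this with the dual case, where $\Delta(g_\lambda) = \sum_{\mu\subseteq\lambda} g_\mu \otimes g_{\lambda/\mu}$ holds on the nose, which is exactly why $G_\mu^{\perp} g_\rho = g_{\rho/\mu}$ and why the argument for Theorem \ref{main result small g} goes through unmodified.) As a consequence, your telescoping argument, run correctly, needs as input $g_{(k)}^{\perp}G_\rho = g_{(1^k)}^{\perp}G_\rho$, i.e.\ $G_{\rho//(k)} = G_{\rho//(1^k)}$ --- which is not literally your combinatorial base case --- and its output is $G_{\rho//\mu} = G_{\rho//\mu^T}$, which is not the theorem. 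Two bridging steps are missing: (i) deduce $G_{\rho//(k)} = G_{\rho/(k)} - G_{\rho/(k-1)} = G_{\rho/(1^k)} - G_{\rho/(1^{k-1})} = G_{\rho//(1^k)}$ from the base case; and (ii) at the end, recover the skew polynomials via Buch's identity $G_{\rho/\mu} = \sum_{\sigma \subseteq \mu} G_{\rho//\sigma}$ together with the bijection $\sigma \mapsto \sigma^T$ between subpartitions of $\mu$ and of $\mu^T$, giving $G_{\rho/\mu} = \sum_{\sigma\subseteq\mu} G_{\rho//\sigma} = \sum_{\sigma\subseteq\mu} G_{\rho//\sigma^T} = G_{\rho/\mu^T}$. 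This is precisely how the paper closes the argument; without (i) and (ii) your proof does not reach the stated theorem.

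Two smaller problems. First, there is no single automorphism $\tau$ satisfying both $\tau(G_\lambda) = G_{\lambda^T}$ and $\tau(g_\lambda) = g_{\lambda^T}$: the $G$-transposing map does not even preserve $\Lambda$ (by Proposition \ref{e as linear combo of G} it sends $e_1 = \sum_{n\ge 1} G_{(1^n)}$ to $\sum_{n\ge 1} G_{(n)}$, which has nonzero components in every degree, so it is not $e_1 = g_{(1)} = g_{(1)^T}$). Yeliussizov's $\tau$ and $\overline{\tau}$ are genuinely different maps; your argument only needs $\overline{\tau}: g_\lambda \mapsto g_{\lambda^T}$, a ring homomorphism of $\Lambda$, so this is repairable by renaming, but as stated the ``standard fact'' is wrong. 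Second, the base case cannot be obtained ``exactly as in the proof of Lemma \ref{straight shape}'': the product Littlewood--Richardson rule used there computes $\langle G_\nu G_\mu, g_\rho \rangle$ and therefore controls $g_{\rho/\mu}$, not $G_{\rho/\mu}$. To control $G_{\rho/\mu}$ one needs Buch's expansion of the skew polynomials themselves, $G_{\lambda/\mu} = \sum_\nu (-1)^{|\nu|-|\lambda/\mu|}\alpha_{\lambda/\mu,\nu} G_\nu$, which the paper handles by a recurrence on the coefficients $\alpha_{\rho_n/(k),\nu}$ and induction on $n$ rather than by a single global bijection; your sketch points at the wrong tool, though a correct combinatorial proof does exist.
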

First, we prove the identity combinatorially for the case $\mu = (k)$ in Section 4.1, and then generalize it to arbitrary $\mu$ in Section 4.2 using the skewing operator and an involution $\overline{\tau}$ of $\Lambda$, an analogue of $\tau$ sending $g_\mu$ to $g_{\mu^T}$, introduced by Yelliusizov \cite{duality and deformations}.

\begin{remark}

In Section 2.8, we prove that the converses of Theorem \ref{main result small g} and Theorem  \ref{main result big g} are true. That is, if $G_{\rho/\mu}=G_{\rho/{\mu}^T}$ (respectively, $g_{\rho/\mu}=g_{\rho/{\mu}^T}$) for all $\mu\subseteq \rho$, then $\rho = (n, n-1, \dots, 1)$ for some nonnegative integer $n$. This follows from Corollary \ref{converse of stembridge}, the converse statement in the case of Schur polynomials, since $s_{\lambda/\mu}$ is the bottom degree component of $G_{\lambda/\mu}$ and the top degree component of $g_{\lambda/\mu}$.
\end{remark}

\section{Preliminaries}
\subsection{Partitions and Diagrams}
A \emph{partition} $\lambda$ of a nonnegative integer $n$ is a weakly decreasing sequence of positive integers $(\lambda_1, \lambda_2, \dots, \lambda_{\ell})$ whose sum is $n$. The integer $\lambda_i$ is the $i$th \emph{part} of $\lambda$. The number of parts of $\lambda$ is the \emph{length} of $\lambda$, denoted $\ell(\lambda)$. We define $|\lambda| = \lambda_1 + \lambda_2 + \cdots + \lambda_{\ell}$. Denote the set of all partitions of $n$ by $\Par(n)$, and let $\Par := \bigcup_{n\ge 0} \Par(n)$. 

\begin{definition}
    The \emph{Young diagram} of a partition $\lambda$, denoted $Y(\lambda)$, is a left-aligned array with $\lambda_i$ cells in the $i$th row from the top.
\end{definition}

\pagebreak

For example, 
\begin{center}
    \ydiagram{5, 3, 3, 1}
\end{center}
is the Young diagram of $\lambda = (5, 3, 3, 1)$.

If $\lambda$ and $\mu$ are two partitions such that $\mu_i \le \lambda_i$ for all $i$, then we write $\mu\subseteq \lambda$ and say that $\mu$ is a \emph{subpartition} of $\lambda$. We may additionally consider the \emph{skew partition} $\lambda/\mu$ whose \emph{skew Young diagram} consists of the cells belonging to $Y(\lambda)$ but not to $Y(\mu)$. 
For example, 
\begin{center}
    \ydiagram{2 + 3, 1 + 3, 1 + 1, 1}
\end{center}
is the Young diagram of $\lambda/\mu = (5, 4, 2, 1)/(2, 1, 1)$. For a skew partition $\lambda/\mu$, we define $|\lambda/\mu| = |\lambda| - |\mu|$. We also identify the partition $\lambda$ with the skew partition $\lambda/\emptyset$.

The \emph{conjugate} of a partition $\lambda$, denoted $\lambda^T$, is the partition whose $i$th part is the number of entries of $\lambda$ that are at least $i$. Equivalently, $Y(\lambda^T)$ is obtained from $Y(\lambda)$ by a reflection over the main diagonal. For example, $(4, 2, 1)$ and $(3, 2, 1, 1)$ are conjugates, as seen from their Young diagrams below.
\[
\begin{matrix}
        \begin{tikzpicture}[baseline=(current bounding box.center)]\node (n) {    \ydiagram{4, 2, 1}};\draw[dashed] (n.north west) -- ([xshift=-0.6cm]n.south east);\end{tikzpicture}
& \quad \iff \quad  &
\begin{tikzpicture}[baseline=(current bounding box.center)]
\node (m) {
    \ydiagram{3, 2, 1, 1}
   };
\draw[dashed] (m.north west) -- ([xshift=0.6cm]m.south east);
\end{tikzpicture}

\end{matrix}
\]

\begin{definition}
Let $\rho_n$ denote the staircase partition $(n, n-1, \dots, 1)$ for some $n\ge 1$. We may use $\rho$ (omitting the $n$) to denote a general staircase partition of unspecified size.
\end{definition}

\subsection{Symmetric Functions}
A \textit{weak composition} of $n$, for $n\in \mathbb{N}$, is an infinite sequence of nonegative integers $\alpha = (\alpha_1, \alpha_2, \dots)$ with $\sum \alpha_i = n.$ Define $x^{\alpha} \coloneqq x_1^{\alpha_1}x_2^{\alpha_2}\cdots$. 
A \textit{homogeneous symmetric function of degree $n$} is a formal power series
\[f(x) = \sum_{\alpha}c_{\alpha}x^{\alpha}\] 
such that $\alpha$ ranges over all weak composition of $n$, the $c_{\alpha}$ are elements of some commutative ring $R$, and for each permutation $\omega$ of the positive integers, $f(x_1, x_2, \dots) = f(x_{\omega(1)}, x_{\omega(2)}, \dots)$. For our purposes, we will take $R = \mathbb{Q}$ and let $\Lambda^n$ denote the set of all homogeneous symmetric functions of degree $n$ over $\mathbb{Q}$. Additionally, $\Lambda = \Lambda^0 \oplus \Lambda^1 \oplus \cdots$, the set of all symmetric functions, is a graded algebra over $\mathbb{Q}$. 

\begin{definition}
    The \textit{elementary symmetric function} $e_n$ is given by $$e_n \coloneqq \sum_{i_1 < \cdots < i_n} x_{i_1}\cdots x_{i_n}.$$ For a partition $\lambda = (\lambda_1, \lambda_2, \dots),$ let $$e_{\lambda} \coloneqq e_{\lambda_1}e_{\lambda_2} \cdots.$$  
\end{definition}
The set $\{e_{\lambda}\}$ for all partitions $\lambda$ forms a basis for $\Lambda.$
\begin{definition}
    The \textit{complete homogeneous symmetric function} $h_n$ is given by $$h_n \coloneqq \sum_{i_1 \leq \cdots \leq i_n} x_{i_1}\cdots x_{i_n}.$$ In particular, $h_n$ is the sum of all monomials with degree $n$. For a partition $\lambda = (\lambda_1, \lambda_2, \dots)$, let \[h_{\lambda} = h_{\lambda_1}h_{\lambda_2}\cdots .\]
\end{definition}
The set $\{h_{\lambda}\}$ for all partitions $\lambda$ forms a basis for $\Lambda$.

\subsection{Schur Polynomials}
\begin{definition}
    A \emph{semistandard Young tableau} (SSYT) of shape $\lambda/\mu$ is a filling of the cells of $Y(\lambda/\mu)$ with positive integers such that the entries weakly increase within each row and strictly increase within each column.
    A semistandard Young tableau $T$ has \emph{type} $\alpha = (\alpha_1, \alpha_2, \dots)$ where $\alpha_i$ is the number of entries of $T$ equal to $i$.
\end{definition}
 
For example, 
\begin{center}
    \ytableausetup{notabloids}
    \begin{ytableau}
    \none & \none & \none & 2 & 4\\
    \none & 1 & 1 & 4\\
    1 & 2 & 2\\
    3 & 4\\
    6
\end{ytableau}
\end{center}
is a SSYT of shape $(5, 4, 3, 2, 1)/(3, 1)$ and type $(3, 3, 1, 3, 0, 1)$.

For a SSYT $T$ of type $\alpha = (\alpha_1, \alpha_2, \dots)$, let $x^T$ denote $x_1^{\alpha_1} x_2^{\alpha_2}\cdots$.

\begin{definition}
    For a skew shape $\lambda/\mu$, the \emph{skew Schur polynomial} $s_{\lambda/\mu}$ in the variables $x = (x_1, x_2, \dots)$ is given by
    \[s_{\lambda/\mu} = \sum_{T} x^T,\]
    where the sum is over all SSYT $T$ of shape $\lambda/\mu$.
    When $\mu = \emptyset$, then $s_{\lambda}$ is the \emph{Schur polynomial} of $\lambda$.
\end{definition}

\begin{example}
    Every SSYT $T$ of shape $\lambda/\mu = (2, 1, 1) /(1)$ is of one of the following forms for some positive integers $i<j<k$.
    \[
    \ytableaushort{\none i, i, j}\quad
    \ytableaushort{\none j, i, j}\quad
    \ytableaushort{\none i, j, k}\quad
    \ytableaushort{\none j, i, k}\quad
    \ytableaushort{\none k, i, j}
    \ytableausetup{nosmalltableaux}
    \]
    Thus, 
    \begin{align*}
        s_{(2, 1, 1)/(1)} &= \sum_{i<j}x_i^2x_j + \sum_{i<j}x_ix_j^2 + 3\sum_{i<j<k}x_ix_jx_k.\\
    \end{align*}
\end{example}

Next, we state two well-known properties of Schur polynomials (see \cite[Chapter 7]{stanley}).
\begin{theorem}
    For all skew partitions $\lambda/\mu$, the skew Schur polynomial $s_{\lambda/\mu}$ is a symmetric function.
\end{theorem}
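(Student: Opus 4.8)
The plan is to show that $s_{\lambda/\mu}$ is invariant under every permutation of the variables. Since the adjacent transpositions $\sigma_i$ (interchanging $x_i$ and $x_{i+1}$ and fixing all other variables) generate the group of all finitely supported permutations of the positive integers, and invariance under this group is exactly the defining condition of a symmetric function, it suffices to exhibit, for each $i \ge 1$, an involution $t_i$ on the set of SSYT of shape $\lambda/\mu$ satisfying $x^{t_i(T)} = \sigma_i(x^T)$; that is, $t_i$ should swap the number of entries equal to $i$ with the number equal to $i+1$ while leaving all other entry counts unchanged. Summing over all SSYT then yields $\sigma_i(s_{\lambda/\mu}) = \sum_T \sigma_i(x^T) = \sum_T x^{t_i(T)} = s_{\lambda/\mu}$, and ranging over all $i$ gives full symmetry.

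The construction of $t_i$ is the Bender--Knuth involution. I would restrict attention to the entries equal to $i$ or $i+1$ in a given SSYT $T$. Because columns are strictly increasing, any column contains at most one $i$ and at most one $i+1$; I call a pair consisting of an $i$ directly above an $i+1$ in the same column a \emph{fixed pair}, and leave those cells untouched. After disregarding the fixed pairs, the remaining (``free'') $i$'s and $i+1$'s in each row form a contiguous block of some $a$ copies of $i$ followed by some $b$ copies of $i+1$; I replace this block by $b$ copies of $i$ followed by $a$ copies of $i+1$. I would then verify that this operation is well defined, produces a valid SSYT, is an involution, and swaps the total counts of $i$ and $i+1$, as required.

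The main obstacle is confirming that $t_i$ preserves semistandardness --- in particular, that after the swap the free $i$'s and $i+1$'s in vertically adjacent rows still satisfy the column-strict condition. This is precisely what motivates isolating the fixed pairs first: removing them ensures that the free entries in each row being rearranged are not column-constrained against one another, so the row-weak and column-strict conditions both survive the local rearrangement. Once this local check is complete, the global symmetry statement follows immediately from the summation identity above, and the argument specializes verbatim to the non-skew case $\mu = \emptyset$.
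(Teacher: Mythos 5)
Your proof is correct. The paper does not actually prove this statement---it cites it as a well-known fact from Stanley's \emph{Enumerative Combinatorics}, Chapter 7---and the Bender--Knuth involution argument you give (isolating fixed $i$/$(i{+}1)$ column pairs, then reversing the counts of free $i$'s and $i{+}1$'s within each row) is precisely the standard proof found in that reference, valid verbatim for skew shapes since the argument is entirely local; you also correctly identify the one point that needs care, namely that vertical neighbors of free cells carry values outside $\{i, i+1\}$, so the row-by-row rearrangement cannot violate column-strictness.
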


\begin{theorem}
    The set $\{s_{\lambda} : \lambda\in\Par(n)\}$ forms a basis for $\Lambda^n$, and the set $\{s_{\lambda} : \lambda\in\Par\}$ forms a basis for $\Lambda$.
\end{theorem}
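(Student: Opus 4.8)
The plan is to exhibit a unitriangular change of basis between the Schur functions and a known basis of $\Lambda^n$, namely the monomial symmetric functions. Recall that for a partition $\mu\in\Par(n)$ the \emph{monomial symmetric function} $m_\mu$ is the sum of all distinct monomials $x^\alpha$ as $\alpha$ ranges over the weak compositions of $n$ that rearrange to $\mu$; straight from the definition of a homogeneous symmetric function, every element of $\Lambda^n$ is a unique $\mathbb{Q}$-linear combination of the $m_\mu$, so $\{m_\mu:\mu\in\Par(n)\}$ is a basis and $\dim_{\mathbb{Q}}\Lambda^n=|\Par(n)|$. This already equals the number of Schur functions $s_\lambda$ with $\lambda\in\Par(n)$, each of which is homogeneous of degree $n$ and symmetric by the earlier theorem; hence it suffices to prove that these $s_\lambda$ are linearly independent.

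First I would expand each Schur function in the monomial basis. Fixing $\lambda\vdash n$ and a partition $\mu\vdash n$, the coefficient of the monomial $x^\mu=x_1^{\mu_1}x_2^{\mu_2}\cdots$ in $s_\lambda=\sum_T x^T$ is exactly the number of SSYT of shape $\lambda$ and type $\mu$, which we call the \emph{Kostka number} $K_{\lambda\mu}$. Because $s_\lambda$ is symmetric, every rearrangement of $x^\mu$ carries the same coefficient, so collecting these monomials gives
\[
s_\lambda=\sum_{\mu\in\Par(n)}K_{\lambda\mu}\,m_\mu.
\]

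The key step is to establish the triangularity of the matrix $(K_{\lambda\mu})$ with respect to the dominance order, in which $\mu\preceq\lambda$ means $\mu_1+\cdots+\mu_i\le\lambda_1+\cdots+\lambda_i$ for every $i$. I claim $K_{\lambda\mu}=0$ unless $\mu\preceq\lambda$, and $K_{\lambda\lambda}=1$. For the vanishing, fix an SSYT $T$ of shape $\lambda$ and type $\mu$: since entries strictly increase down each column, any cell in row $r$ carries an entry at least $r$, so every entry not exceeding $i$ must lie in the first $i$ rows. Those rows contain $\lambda_1+\cdots+\lambda_i$ cells, whereas $T$ has $\mu_1+\cdots+\mu_i$ entries not exceeding $i$; therefore $\mu_1+\cdots+\mu_i\le\lambda_1+\cdots+\lambda_i$, giving $\mu\preceq\lambda$. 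When $\mu=\lambda$ this inequality is tight for every $i$, which forces every entry of row $r$ to equal $r$, the unique such filling, so $K_{\lambda\lambda}=1$.

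Finally I would order $\Par(n)$ by any linear extension of dominance order. The expansion above then writes $s_\lambda=m_\lambda+\sum_{\mu\prec\lambda}K_{\lambda\mu}\,m_\mu$, so the transition matrix from $\{m_\mu\}$ to $\{s_\lambda\}$ is unitriangular and hence invertible over $\mathbb{Q}$. Consequently $\{s_\lambda:\lambda\in\Par(n)\}$ is a basis of $\Lambda^n$, and taking the union over all $n$ together with the grading $\Lambda=\Lambda^0\oplus\Lambda^1\oplus\cdots$ shows that $\{s_\lambda:\lambda\in\Par\}$ is a basis of $\Lambda$. The main obstacle is the dominance triangularity of the Kostka matrix; once the column-strictness argument above is in place, the remainder is formal linear algebra.
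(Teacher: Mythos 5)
Your proof is correct. Note, however, that the paper does not prove this statement at all: it is quoted as a well-known fact with a citation to Stanley's \emph{Enumerative Combinatorics II}, Chapter 7. What you have written out is precisely the standard argument from that reference: expand $s_\lambda = \sum_{\mu} K_{\lambda\mu} m_\mu$ in the monomial basis, prove that the Kostka matrix is unitriangular with respect to (any linear extension of) dominance order via the column-strictness argument, and conclude invertibility of the transition matrix. All the key steps are sound, including the two points that are easy to gloss over: the justification that entries in row $r$ of an SSYT are at least $r$ (by strict increase down columns), and the rigidity argument showing $K_{\lambda\lambda}=1$ (equality in the dominance inequalities forces row $r$ to be filled with $r$'s). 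So your proposal supplies a complete proof of a statement the paper leaves to the literature, and it is the same proof the cited source gives.
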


\begin{definition}
    
The \emph{Hall inner product} $\langle \cdot , \cdot \rangle$ on $\Lambda$ is defined so that the Schur polynomials are orthonormal; that is, $\langle s_{\lambda}, s_{\mu} \rangle = \delta_{\lambda \mu}$, the Kronecker delta. 
\end{definition}

\subsection{Dual Stable Grothendieck Polynomials}
\begin{definition}
    A \emph{reverse plane partition} of shape $\lambda/\mu$ is a filling of the cells of $Y(\lambda/\mu)$ with positive integers such that the entries weakly increase within each row and column. A reverse plane partition $P$ has \emph{weight} $w = (w_1, w_2, \dots)$, where $w_i$ is the number of columns of $P$ containing $i$.
\end{definition}

For example,
\begin{center}
    \begin{ytableau}
    \none & 1 & 2 & 2 & 4\\
    \none & 1 & 2 & 5\\
    1 & 2 & 2
\end{ytableau}
\end{center}
is a reverse plane partition of shape $(5, 4, 3)/(1, 1)$ and weight $(2, 3, 0, 1, 1)$.

For a reverse plane partition $P$ of weight $w = (w_1, w_2, \dots)$, let $x^P$ denote $x_1^{w_1}x_2^{w_2}\cdots$.

\begin{definition}
For a skew shape $\lambda / \mu$, define the \emph{skew dual stable Grothendieck polynomial} $g_{\lambda / \mu}$ to be
\begin{align*}
    g_{\lambda/\mu} = \sum_{P}x^P,
\end{align*}
where the sum is over all reverse plane partitions $P$ of shape $\lambda / \mu$. 
When $\mu = \emptyset$, then $g_{\lambda}$ is the \emph{dual stable Grothendieck polynomial} of $\lambda$.
\end{definition}

\begin{example}
Every reverse plane partition $P$ of shape $\lambda/\mu = (2, 2)/(1)$ takes on one of the following forms, for some positive integers $i<j<k$.
    \[
    \ytableaushort{\none i, i i}\quad
    \ytableaushort{\none i, i j}\quad
    \ytableaushort{\none j, i j}\quad
    \ytableaushort{\none i, j j}\quad
    \ytableaushort{\none i, j k}\quad
    \ytableaushort{\none j, i k}
    \]
Thus,
\begin{align*}
    g_{(2, 2)/(1)} &= \sum_{i}x_i^2 + \sum_{i<j}x_i^2 x_j + \sum_{i<j}x_ix_j + \sum_{i<j}x_ix_j^2 + 2\sum_{i<j<k}x_ix_jx_k.
\end{align*}
\end{example}

As shown in \cite{LP}, the dual stable Grothendieck polynomials $g_{\lambda}$ are symmetric functions and form a basis for $\Lambda$.

\begin{remark}\label{s is highest}
The terms of highest degree in $g_{\lambda/\mu}$ are achieved by reverse plane partitions in which there are no numbers repeated in any column; that is, the columns are strictly increasing. In other words, the reverse plane partition must also be a semi-standard Young tableau. Thus, the terms of highest degree in $g_{\lambda/\mu}$ form $s_{\lambda/\mu}$.
\end{remark}

\subsection{Stable Grothendieck Polynomials}
\begin{definition}
    For two nonempty sets $A$ and $B$ of positive integers, we say that $A\le B$ if $\max A\le \min B$ and $A < B$ if $\max A < \min B$.
    A \textit{set-valued tableau} of shape $\lambda/\mu$ is then a filling of the boxes of $Y(\lambda/\mu)$ with nonempty sets of positive integers such that the sets weakly increase along rows and strictly increase along columns.
\end{definition}
\begin{definition}
Let the \textit{size} of $T$, denoted by $|T|$, be the sum of the sizes of the sets appearing in $T$. 
\end{definition}
\begin{example}
The following is a set-valued tableau of shape $(5, 4, 3)/(2, 1)$ and size $15.$
    \begin{center}
    \ytableausetup{notabloids, boxsize = 3em}
    \begin{ytableau}
    \none & \none & 1,2 & 2,3,4 & 7\\
    \none & 3 & 3,5 & 5\\
    2 & 4,5,6 & 6
    \end{ytableau}
\end{center}
\end{example}
\begin{definition}
Let $m_i$ be the number of times that $i$ appears in the set-valued tableau $T$, and let $x^T = x_1^{m_1}x_2^{m_2}\cdots$. Then the \textit{skew stable Grothendieck polynomial} $G_{\lambda/\mu}$ is a formal power series given by
\[
G_{\lambda/\mu} \coloneqq \sum_{T}(-1)^{|T|-|\lambda/\mu|}x^T,
\]
where the sum is over all set-valued tableaux $T$ of shape $\lambda/\mu.$ When $\mu = \emptyset$, then $G_{\lambda}$ is the \emph{stable Grothendieck polynomial} of $\lambda$.
\end{definition}

\begin{remark}\label{s is lowest}
A set-valued tableau of shape $\lambda/\mu$ filled with sets of size one is a semi-standard Young tableau, corresponding to the monomials in $G_{\lambda/\mu}$ of lowest degree.
Thus, the terms of lowest degree in $G_{\lambda/\mu}$ form $s_{\lambda/\mu}.$ The stable Grothendieck polynomial has terms of arbitrarily large degree if $|\lambda/\mu| > 0$.
\end{remark}

\begin{remark}
Let $\hat{\Lambda}$ be the completion of $\Lambda$, given by allowing infinite linear combinations of a given basis (e.g. the Schur polynomials). The Hall inner product $\langle \cdot, \cdot \rangle: \Lambda \times \Lambda \rightarrow \mathbb{Q}$ can be extended to a pairing $$\langle \cdot, \cdot \rangle: \hat{\Lambda}\times\Lambda\rightarrow \mathbb{Q}$$ by linearly extending $\langle s_{\lambda}, s_{\mu} \rangle = \delta_{\lambda \mu}$ as in \cite{duality and deformations}. The $G_{\lambda}$ are symmetric functions, and any symmetric formal power series $f\in\hat{\Lambda}$ can be \textit{uniquely} represented as an infinite sum $\sum_{\lambda\in\Par}\alpha_{\lambda}G_{\lambda}$ with $a_{\lambda}\in\mathbb{Q}$.
The $G_{\lambda}$ are also dual to the $g_{\lambda}$ under the (extended) Hall inner product; that is, $\langle G_{\lambda}, g_{\mu} \rangle = \delta_{\lambda\mu}$. 
\end{remark}

\subsection{Hopf Algebras}
The ring of symmetric functions $\Lambda$ has a Hopf algebraic structure, as described in \cite{hopf}. To compute $\Delta(f)$ for a symmetric function $f\in\Lambda$, we introduce new indeterminates $y_1, y_2, \dots$ and write the power series $f(x_1, x_2, \dots, y_1, y_2, \dots)$ as a finite sum
\[f(x_1, x_2, \dots, y_1, y_2, \dots) = \sum_{i = 1}^{k} p_i(x_1, x_2, \dots)q_i(y_1, y_2, \dots),\]
for $p_i, q_i\in \Lambda$. Then
\[\Delta(f) = \sum_{i = 1}^{k} p_i\otimes q_i.\]

\begin{lemma}\label{skewing thingy of g lambda}
    The comultiplication acts on $g_{\lambda}$ as follows:  
    
\[\Delta(g_{\lambda}) = \sum_{\mu \subseteq \lambda} g_{\mu} \otimes g_{\lambda/\mu }.\]
\end{lemma}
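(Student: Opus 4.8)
The plan is to compute $\Delta(g_\lambda)$ directly from the substitution recipe given above: evaluate $g_\lambda$ in the merged alphabet $(x_1, x_2, \dots, y_1, y_2, \dots)$, totally ordered so that every $x_i$ precedes every $y_j$, and then factor the resulting power series into a sum of terms depending separately on the $x$'s and the $y$'s. Since $g_\lambda$ is a symmetric function, the series $g_\lambda(x_1, x_2, \dots, y_1, y_2, \dots)$ is well defined independently of this chosen order, so I am free to use the order to analyze the tableaux. The key structural observation is that in any reverse plane partition $P$ of shape $\lambda$ over the merged alphabet, the cells carrying $x$-values form a Young subdiagram $\mu \subseteq \lambda$, and the cells carrying $y$-values form the complementary skew shape $\lambda/\mu$.

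First I would justify this decomposition. Because entries of a reverse plane partition weakly increase rightward along rows and downward along columns, and because every $x$-value is strictly smaller than every $y$-value in our order, the cell directly to the left of, or directly above, any $x$-valued cell must again be $x$-valued. Hence the set of $x$-valued cells is closed under moving up and left, which is exactly the condition that it be the Young diagram of some $\mu \subseteq \lambda$. Conversely, given any $\mu \subseteq \lambda$, a reverse plane partition of shape $\mu$ in the $x$-alphabet and a reverse plane partition of shape $\lambda/\mu$ in the $y$-alphabet glue to a reverse plane partition of shape $\lambda$: the only new inequalities are across the $\mu$–$(\lambda/\mu)$ boundary, where an $x$-value sits above or left of a $y$-value, and these hold automatically since $x < y$. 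This yields a bijection between reverse plane partitions of $\lambda$ over the merged alphabet and pairs $(P_\mu, P_{\lambda/\mu})$, ranging over $\mu \subseteq \lambda$.

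Next I would check that the monomial weight factors across this bijection, which is the step demanding the most care because the weight of a reverse plane partition is counted by columns rather than by cells. For an $x$-value $i$, the value $i$ occurs only inside $\mu$, and the cells of $\mu$ in a given column are the top segment of that same column of $\lambda$; thus a column of $\lambda$ contains $i$ iff the corresponding column of $\mu$ does, so the $x_i$-exponent of $x^P$ equals that of $x^{P_\mu}$. The identical argument on the bottom column-segments of $\lambda/\mu$ shows the $y_j$-exponents agree with those of $y^{P_{\lambda/\mu}}$. Consequently $x^P = x^{P_\mu}\, y^{P_{\lambda/\mu}}$, and summing over all reverse plane partitions gives
\[
g_\lambda(x, y) \;=\; \sum_{\mu \subseteq \lambda} g_\mu(x)\, g_{\lambda/\mu}(y).
\]

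Finally, since this is a finite sum of products $p_i(x)\, q_i(y)$ with $p_i = g_\mu$ and $q_i = g_{\lambda/\mu}$ both symmetric functions, reading off the comultiplication from the defining recipe gives $\Delta(g_\lambda) = \sum_{\mu \subseteq \lambda} g_\mu \otimes g_{\lambda/\mu}$, as claimed. I expect the main obstacle to be the column-counting bookkeeping in the weight factorization: one must confirm that restricting the full columns of $\lambda$ to $\mu$ (respectively to $\lambda/\mu$) neither merges nor splits columns nor changes which values occupy which column, so that the column-based statistic genuinely splits as a product. This is resolved by observing that every value lies entirely within $\mu$ or entirely within $\lambda/\mu$, and that each column index is preserved under the restriction.
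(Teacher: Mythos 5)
Your proposal is correct and follows essentially the same route as the paper's proof: evaluate $g_\lambda$ on the merged ordered alphabet $x_1 < x_2 < \cdots < y_1 < y_2 < \cdots$, observe that the $x$-valued cells form a subdiagram $\mu \subseteq \lambda$ with the $y$-valued cells filling $\lambda/\mu$, and factor the sum as $\sum_{\mu \subseteq \lambda} g_\mu(x)\, g_{\lambda/\mu}(y)$. Your write-up is in fact slightly more careful than the paper's, since you explicitly verify the column-based weight statistic splits across the restriction (a point the paper passes over silently), which is exactly the place where the reverse-plane-partition weight differs from a cell-count.
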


\begin{proof}
    From the combinatorial definition, 
    \[
    g_{\lambda}(x_1, x_2, \dots, y_1, y_2, \dots) = \sum_{P} x^P,
    \]
    summed over all reverse plane partitions $P$ of shape $\lambda$ with entries in the alphabet $x_1 < x_2 < \cdots < y_1 < y_2 < \cdots$. Since the rows and columns of $P$ are weakly increasing, the restriction of the reverse plane partition to the alphabet $x$ gives a reverse plane partition $P_x$ of shape $\mu \subseteq \lambda$, and the restriction to the alphabet $y$ gives a reverse plane partition $P_y$ of shape $\lambda / \mu$. 
    
    Then, 
    
    \[
    g_{\lambda}(x, y) = \sum_P x^{P_x} \cdot y^{P_y} = \sum_{\mu \subseteq \lambda} \left(\sum_{P_x} x^{P_x}\right)\left(\sum_{P_y} y^{P_y}\right) = \sum_{\mu \subseteq \lambda} g_{\mu}(x) g_{\lambda/\mu}(y).
    \]
    
    Thus, we indeed have that 
    
    \[\Delta(g_{\lambda}) = \sum_{\mu \subseteq \lambda} g_{\mu} \otimes g_{\lambda/\mu }.\]

\vspace*{-2\baselineskip}
\end{proof}

We next define the skewing operator $\perp$ (see for instance \cite{hopf}, Section 2.8) which we will use throughout the paper.

\begin{definition}\label{skewing}
Let $f\in \Lambda$ or $f\in \hat{\Lambda}$. The \textit{skewing operator} $f^{\perp} : \Lambda\to\Lambda$ is defined by
\[f^{\perp}(a)\coloneqq\sum_{i = 1}^{k}\langle f, b_i\rangle c_i,\]
where $\Delta(a)$ is written as $\Delta(a) = \sum_{i = 1}^{k}b_i\otimes c_i$.

\end{definition}

\begin{theorem}\label{skewing little g} For any partition $\mu \subseteq \lambda$,
    \[G_{\mu}^{\perp}g_{\lambda} = g_{\lambda/\mu}.\]
\end{theorem}

\begin{proof}
    Recall from Lemma \ref{skewing thingy of g lambda} that $\Delta(g_{\lambda}) = \sum_{\mu \subseteq \lambda} g_{\mu} \otimes g_{\lambda/\mu }.$ 
    Then by the definition of the skewing operator, 
    \begin{align*}
        G_{\mu}^{\perp} g_{\lambda} &= \sum_{\nu} \langle G_{\mu}, g_{\nu}\rangle g_{\lambda /\nu} \\
    &= \sum_{\nu} \delta_{\mu, \nu} g_{\lambda/\nu} \\
    &= g_{\lambda/ \mu}.
    \end{align*}
    
    \vspace*{-2\baselineskip}
\end{proof}

The skewing operator also has the following useful properties. 

\begin{lemma}
[]
    \label{skewingProperties1}
    For $f, g \in \hat{\Lambda}$ and $a \in \Lambda,$ we have  
    \begin{align*}\langle g, f^{\perp}(a)\rangle = \langle fg,a\rangle ,\end{align*}
    where $\langle \cdot, \cdot \rangle$ denotes the extended Hall inner product. 
    
\end{lemma}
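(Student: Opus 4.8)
The plan is to reduce the claimed identity to the self-duality of the Hopf algebra $\Lambda$ under the Hall inner product, namely the statement that comultiplication is adjoint to multiplication:
\[\langle fg, a\rangle = \langle f\otimes g, \Delta(a)\rangle,\]
where the pairing on $\Lambda\otimes\Lambda$ is defined on pure tensors by $\langle p_1\otimes q_1, p_2\otimes q_2\rangle = \langle p_1, p_2\rangle\langle q_1, q_2\rangle$ and extended bilinearly. This adjunction is a standard feature of the Hopf structure on symmetric functions (see \cite{hopf}); it can be checked directly on the Schur basis, where self-duality makes the structure constants for multiplication and comultiplication literally transpose to each other. I would invoke this as the main input.

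Granting the adjunction, the proof is a short unwinding of definitions. Writing $\Delta(a) = \sum_{i=1}^k b_i\otimes c_i$, the right-hand side expands as
\[\langle fg, a\rangle = \langle f\otimes g, \Delta(a)\rangle = \sum_{i=1}^k \langle f, b_i\rangle\langle g, c_i\rangle.\]
For the left-hand side, I would apply the definition of the skewing operator, $f^{\perp}(a) = \sum_{i=1}^k \langle f, b_i\rangle c_i$, and then use bilinearity of the extended pairing:
\[\langle g, f^{\perp}(a)\rangle = \Big\langle g, \sum_{i=1}^k \langle f, b_i\rangle c_i\Big\rangle = \sum_{i=1}^k \langle f, b_i\rangle\langle g, c_i\rangle.\]
The two expressions agree term by term, which is exactly the claim.

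The one point requiring care — and the only real obstacle — is the presence of the completion $\hat{\Lambda}$: both $f$ and $g$ may be infinite linear combinations, so I must ensure every pairing above is well-defined and that the adjunction, originally an identity on $\Lambda$, extends to $\hat{\Lambda}$. Here the hypothesis $a\in\Lambda$ is what saves us: $a$ has bounded degree, $\Delta(a)\in\Lambda\otimes\Lambda$ is a genuine finite sum, and since the Hall pairing annihilates Schur functions of differing degrees, only the finitely many homogeneous components of $f$ and $g$ matching the degrees of the $b_i$ and $c_i$ contribute. Thus each $\langle f, b_i\rangle$ and $\langle g, c_i\rangle$ is a finite, well-defined quantity, and the identity for general $f,g\in\hat{\Lambda}$ follows from the identity on $\Lambda$ by linear extension degree by degree.
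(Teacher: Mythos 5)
Your proof is correct and follows essentially the same route as the paper's: both rest on the same key input from \cite{hopf} (the paper cites Proposition 2.8.2 in the form $\langle s_{\nu}, s_{\lambda}^{\perp}(a)\rangle = \langle s_{\lambda}s_{\nu}, a\rangle$ on the Schur basis, which is the same multiplication--comultiplication adjunction you invoke), and both handle the completion $\hat{\Lambda}$ by the identical observation that $a \in \Lambda$ has bounded degree, so only finitely many homogeneous components of $f$ and $g$ contribute and everything reduces to finite sums. The only difference is organizational: you pass through the intermediate quantity $\langle f \otimes g, \Delta(a)\rangle$ and unwind the definition of $f^{\perp}$, whereas the paper verifies the identity on Schur basis elements and then extends bilinearly.
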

\begin{proof}
For $f = s_{\lambda}, g = s_{\nu} \in \Lambda,$ the equation reduces to Proposition 2.8.2 of \cite{hopf}, which gives us that $\langle s_{\nu}, s_{\lambda}^{\perp}(a) \rangle = \langle s_{\lambda}s_{\nu}, a\rangle.$ Now, for any $f = \sum_{\lambda}a_{\lambda} s_{\lambda}, g = \sum_{\nu}b_{\nu}s_{\nu} \in \hat{\Lambda},$ the skewing operator distributes linearly, and the inner product is bilinear. Since $a$ is a finite linear combination of Schur polynomials, the terms of large degree in $f$ and $g$ do not contribute to the final sum, since the inner product of a higher-degree Schur polynomial with $a$ will be zero. As a result, both inner products are sums of finitely many terms. Thus since the Schur polynomials form a basis for $\hat{\Lambda}$, the equation holds true for general $f, g.$
\end{proof}

\begin{lemma}\label{skewingProperties2}
    For $f, g \in \Lambda$ and $a \in \hat{\Lambda},$ we have  
    \begin{align*}\langle g, f^{\perp}(a)\rangle = \langle fg,a\rangle .\end{align*}
\end{lemma}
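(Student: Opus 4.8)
The plan is to mirror the proof of Lemma \ref{skewingProperties1}, reducing to the Schur basis and invoking Proposition 2.8.2 of \cite{hopf}, but with the roles of the ``finite'' and ``infinite'' arguments interchanged: here $f,g\in\Lambda$ are finite while $a\in\hat{\Lambda}$ may be an infinite combination of Schur polynomials. First I would observe that since $f^{\perp}$ lowers degree by a fixed amount on each homogeneous piece of $f$, it extends to a well-defined degree-lowering map $f^{\perp}\colon\hat{\Lambda}\to\hat{\Lambda}$, acting separately on each graded component of its argument. Consequently $f^{\perp}(a)\in\hat{\Lambda}$, and, using the symmetry of the extended Hall pairing $\hat{\Lambda}\times\Lambda\to\mathbb{Q}$, the quantity $\langle g, f^{\perp}(a)\rangle$ is well-defined.

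Next I would reduce to the case $f=s_{\lambda}$, $g=s_{\nu}$ for fixed partitions $\lambda,\nu$. Because $f,g\in\Lambda$ are finite linear combinations of Schur polynomials and both the skewing operator and the inner product are bilinear, it suffices to prove the identity when $f$ and $g$ are single Schur polynomials. Writing $a=\sum_{\kappa}c_{\kappa}s_{\kappa}$ as its (possibly infinite) Schur expansion, linearity of $s_{\lambda}^{\perp}$ and of the pairing in the $\hat{\Lambda}$-slot gives
\[
\langle s_{\nu}, s_{\lambda}^{\perp}(a)\rangle=\sum_{\kappa}c_{\kappa}\,\langle s_{\nu}, s_{\lambda}^{\perp}(s_{\kappa})\rangle,
\qquad
\langle s_{\lambda}s_{\nu}, a\rangle=\sum_{\kappa}c_{\kappa}\,\langle s_{\lambda}s_{\nu}, s_{\kappa}\rangle.
\]
Proposition 2.8.2 of \cite{hopf} supplies $\langle s_{\nu}, s_{\lambda}^{\perp}(s_{\kappa})\rangle=\langle s_{\lambda}s_{\nu}, s_{\kappa}\rangle$ for each $\kappa$, so the two sums agree term by term, and the identity for general $f,g$ follows.

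The step I expect to require the most care is \emph{convergence}: a priori both displayed sums range over infinitely many partitions $\kappa$, so I must verify that they are in fact finite. This is exactly where the finiteness of $f$ and $g$ enters. Since $s_{\lambda}^{\perp}(s_{\kappa})$ is homogeneous of degree $|\kappa|-|\lambda|$, the coefficient $\langle s_{\nu}, s_{\lambda}^{\perp}(s_{\kappa})\rangle$ vanishes unless $|\kappa|=|\lambda|+|\nu|$; likewise $s_{\lambda}s_{\nu}$ is a finite sum of Schur polynomials all of degree $|\lambda|+|\nu|$, so $\langle s_{\lambda}s_{\nu}, s_{\kappa}\rangle$ vanishes unless $|\kappa|=|\lambda|+|\nu|$. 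As there are only finitely many partitions of any fixed size, each sum collapses to a finite sum supported on the same index set, which both justifies interchanging the infinite sum over $\kappa$ with the pairing and completes the argument.
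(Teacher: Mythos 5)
Your proposal is correct and follows essentially the same route as the paper, which simply declares the result ``analogous to Lemma~\ref{skewingProperties1}'' with the large-degree terms of $a$ ignored and the pairing extended in the second coordinate. You have merely made explicit the details the paper leaves implicit: the reduction to Schur polynomials via Proposition~2.8.2 of \cite{hopf}, and the observation that only the finitely many $\kappa$ with $|\kappa|=|\lambda|+|\nu|$ contribute, which is exactly the finiteness argument the paper gestures at.
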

The proof for this lemma follows in a way analogous to that of Lemma \ref{skewingProperties1}, since the terms of large degree in $a$ can now be ignored. Here the Hall inner product is linearly extended in the second coordinate. 

\subsection{The Stembridge Equality}
The Stembridge equality describes an important symmetry for the Schur polynomials and can be proved in a number of different ways (e.g. Corollary 7.32 in \cite{reiner} and Exercise 2.9.25 in \cite{hopf}). 

\begin{theorem}[Stembridge Equality]
    Let $\rho=(n,n-1,\dots,1)$ be the staircase partition, and $\mu \subseteq \rho$. Then    $$s_{\rho/\mu} = s_{\rho/\mu^T}.$$
\end{theorem}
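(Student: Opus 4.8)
The plan is to mirror the strategy the paper uses for $g$ and $G$: first settle the single-row case $\mu = (k)$ by a Pieri computation, then bootstrap to arbitrary $\mu$ via the skewing operator together with the involution $\omega$ on $\Lambda$ that exchanges the complete homogeneous and elementary symmetric functions (so $\omega s_{(k)} = \omega h_k = e_k = s_{(1^k)}$, and more generally $\omega s_\lambda = s_{\lambda^T}$), which plays the role of Yeliussizov's $\tau$ and $\overline{\tau}$. Using the standard identity $s_{\lambda/\mu} = s_\mu^{\perp}(s_\lambda)$ (the Schur analogue of Theorem~\ref{skewing little g}) together with the adjointness $\langle s_\mu s_\nu, s_\rho\rangle = \langle s_\nu, s_\mu^{\perp}(s_\rho)\rangle$ from Lemma~\ref{skewingProperties1}, one has $\langle s_\nu, s_{\rho/\mu}\rangle = \langle s_\mu s_\nu, s_\rho\rangle$ for every partition $\nu$. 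Since the $s_\nu$ form a basis, proving $s_{\rho/\mu} = s_{\rho/\mu^T}$ is therefore equivalent to showing $\langle s_\mu s_\nu, s_\rho\rangle = \langle s_{\mu^T} s_\nu, s_\rho\rangle$ for all $\nu$.

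First I would prove the base case $s_{\rho/(k)} = s_{\rho/(1^k)}$. By Pieri's rule, $\langle s_\rho, s_{(k)} s_\nu\rangle$ equals $1$ if $\rho/\nu$ is a horizontal strip of size $k$ and $0$ otherwise, while $\langle s_\rho, s_{(1^k)} s_\nu\rangle$ equals $1$ if $\rho/\nu$ is a vertical strip of size $k$ and $0$ otherwise. The key observation, special to the staircase, is that for $\rho = (n, n-1, \dots, 1)$ the interlacing condition forces any relevant $\nu$ to satisfy $\nu_i \in \{\rho_i, \rho_i - 1\}$, so that every removed cell is the last cell $(i, n+1-i)$ of its row, i.e. a cell on the main antidiagonal. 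Since the antidiagonal cells lie in pairwise distinct rows and pairwise distinct columns simultaneously, removing any $k$ of them yields a skew shape that is at once a horizontal and a vertical $k$-strip. Hence the two families of $\nu$ coincide, giving $s_{\rho/(k)} = s_{\rho/(1^k)}$.

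Finally I would bootstrap to general $\mu$. Rephrased through the adjointness above, the base case says $\langle h_k\, g, s_\rho\rangle = \langle e_k\, g, s_\rho\rangle$ for all $g \in \Lambda$ and all $k$. Applying this identity to one factor at a time shows $\langle h_{a_1}\cdots h_{a_r}\, g, s_\rho\rangle = \langle e_{a_1}\cdots e_{a_r}\, g, s_\rho\rangle$ for any indices $a_1, \dots, a_r$. The Jacobi--Trudi and dual Jacobi--Trudi identities give $s_\mu = \det(h_{\mu_i - i + j})$ and $s_{\mu^T} = \det(e_{\mu_i - i + j})$ with the \emph{same} index array, so expanding both determinants over permutations $\sigma$ matches their terms $h_{a_1^\sigma}\cdots h_{a_\ell^\sigma}$ and $e_{a_1^\sigma}\cdots e_{a_\ell^\sigma}$ one-to-one with equal signs; summing yields $\langle s_\mu s_\nu, s_\rho\rangle = \langle s_{\mu^T} s_\nu, s_\rho\rangle$ for every $\nu$, which is the desired equality. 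I expect the main obstacle to be the base case rather than the bootstrap: the bootstrap is a formal consequence of Jacobi--Trudi once the single-row identity is in hand, so the real content lies in the antidiagonal observation that makes horizontal and vertical strip removals from the staircase coincide.
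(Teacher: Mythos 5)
Your proof is correct, but note that the paper itself never proves this statement: it quotes the Stembridge equality as known, citing Corollary 7.32 of \cite{reiner} and Exercise 2.9.25 of \cite{hopf}. So the meaningful comparison is with the paper's proofs of the $g$- and $G$-analogues (Theorems \ref{main result small g} and \ref{main result big g}), and your argument is exactly that two-step scheme transported back to the Schur setting, with each ingredient replaced by a simpler classical counterpart. Your base case is sound: for the staircase, the horizontal-strip (interlacing) condition and the vertical-strip (one box per row) condition in Theorem \ref{pieri} both reduce to $\nu_i \in \{\rho_i, \rho_i - 1\}$, so every removed cell is an antidiagonal cell, and these lie in distinct rows and distinct columns simultaneously; hence the two Pieri expansions of $s_{\rho/(k)}$ and $s_{\rho/(1^k)}$ coincide. (This is the same observation the paper runs in the opposite direction to prove the converse theorem preceding Corollary \ref{converse of stembridge}.) In place of the paper's bootstrap --- Yeliussizov's involutions $\tau$, $\overline{\tau}$ together with the subalgebra lemma (Lemma \ref{subalgebra for arbitrary homomorphism}), which requires the completion $\hat{\Lambda}$ and continuity --- you expand $s_\mu = \det(h_{\mu_i - i + j})$ and $s_{\mu^T} = \det(e_{\mu_i - i + j})$ over permutations and swap $h$'s for $e$'s one factor at a time against $s_\rho$, using the adjointness of Lemma \ref{skewingProperties1} and commutativity of $\Lambda$. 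This does the same work: your factor-by-factor swap is precisely the multiplicativity step of the subalgebra lemma, and the fact that the two determinants share one index array is precisely the statement $\omega(s_\mu) = s_{\mu^T}$, so the involution $\omega$ you advertise at the outset is never actually needed as a ring map. What your route buys is that it is entirely elementary and self-contained --- no completion, no continuity, no involution machinery, all of which are genuinely necessary in the $G$/$g$ setting (where the transition matrices are infinite) but are overkill for Schur polynomials. The only minor points worth making explicit in a final write-up are that degenerate determinant entries are harmless (the swap identity holds trivially for indices $\le 0$ since $h_0 = e_0 = 1$ and $h_m = e_m = 0$ for $m < 0$), and that the final step rests on non-degeneracy of the Hall pairing with respect to the Schur basis.
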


In this paper, we extend the Stembridge equality to the skew stable and skew dual stable Grothendieck polynomials.

In addition, the converse is true. That is, if $s_{\lambda/\mu} = s_{\lambda/\mu^T}$ for all $\mu\subseteq\lambda$, then $\lambda = (n, n-1, \dots, 1)$ for some nonnegative integer $n$. To prove this, we use Pieri's rule, a well-known fact described in \cite[Theorem 7.15.7]{stanley}, for example. 

\begin{definition}
    A skew shape $\lambda/\nu$ is a \textit{horizontal strip} if it has no two squares in the same column, or a \textit{vertical strip} if no two squares are in the same row.
\end{definition}

\begin{theorem}\label{pieri}[Pieri's Rule]
    We have 
    \[s_{\lambda/(k)} = \sum_{\nu}s_{\nu},\]
    where $\nu$ ranges over all partitions $\nu\subseteq\lambda$ for which $\lambda/\nu$ is a horizontal strip of size $k$. Similarly,
    \[s_{\lambda/(1^k)} = \sum_{\nu}s_{\nu},\]
    where $\nu$ ranges over all partitions $\nu\subseteq\lambda$ for which $\lambda/\nu$ is a vertical strip of size $k$.
\end{theorem}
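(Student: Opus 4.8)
The plan is to deduce the skew (co-Pieri) form stated here from the classical product form of Pieri's rule, using the adjointness between skewing and multiplication that the paper has already set up. First I would record the two facts that turn skew Schur functions into skewing operators applied to straight-shape ones: that $s_{(k)} = h_k$ and $s_{(1^k)} = e_k$, and that the Schur functions obey the coproduct identity $\Delta(s_\lambda) = \sum_{\nu \subseteq \lambda} s_\nu \otimes s_{\lambda/\nu}$ (the Schur analogue of Lemma \ref{skewing thingy of g lambda}). As in Theorem \ref{skewing little g}, combining this with orthonormality of the Schur basis gives $s_{(k)}^{\perp}(s_\lambda) = s_{\lambda/(k)}$ and $s_{(1^k)}^{\perp}(s_\lambda) = s_{\lambda/(1^k)}$.

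Next, for an arbitrary partition $\nu$ I would extract the coefficient $\langle s_\nu, s_{\lambda/(k)}\rangle$. Applying the adjointness of Lemma \ref{skewingProperties1} with $f = s_{(k)}$, $g = s_\nu$, and $a = s_\lambda$ yields
\[
\langle s_\nu, s_{\lambda/(k)}\rangle = \langle s_\nu, s_{(k)}^{\perp}(s_\lambda)\rangle = \langle s_{(k)}\, s_\nu, s_\lambda\rangle = \langle h_k\, s_\nu, s_\lambda\rangle.
\]
The classical product form of Pieri's rule \cite[Theorem 7.15.7]{stanley} expands $h_k\, s_\nu = \sum_\kappa s_\kappa$, the sum being over all $\kappa$ for which $\kappa/\nu$ is a horizontal strip of size $k$; hence $\langle h_k\, s_\nu, s_\lambda\rangle$ is $1$ when $\lambda/\nu$ is a horizontal strip of size $k$ and $0$ otherwise. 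Since the Schur functions form an orthonormal basis,
\[
s_{\lambda/(k)} = \sum_\nu \langle s_\nu, s_{\lambda/(k)}\rangle\, s_\nu = \sum_{\nu} s_\nu,
\]
where $\nu$ ranges precisely over the partitions with $\lambda/\nu$ a horizontal strip of size $k$. The second identity follows by the identical argument with $e_k$ in place of $h_k$, invoking the vertical-strip form of the product Pieri rule, since multiplication by $e_k = s_{(1^k)}$ adds a vertical strip.

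The only genuine content buried in this reduction is the product form of Pieri's rule itself, which I would regard as the main obstacle if a fully self-contained proof were required. That form admits the standard combinatorial proof: one writes $h_k\, s_\nu = \sum_{(T,w)} x^T x^w$ over pairs of an SSYT $T$ of shape $\nu$ and a weakly increasing word $w$ of length $k$, and exhibits a weight-preserving bijection—via RSK row insertion, or equivalently a Bender--Knuth--type involution—showing that inserting a weakly increasing word grows the shape by a horizontal strip and that every resulting multiplicity is $0$ or $1$. Once the product form is in hand, the adjointness computation above is purely formal and transfers it to the skew statement with no further combinatorics; I would therefore present the argument as a short corollary of the cited result rather than reproving it from scratch.
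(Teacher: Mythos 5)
Your proof is correct, but note that the paper does not actually prove this theorem: it is quoted as a known fact with a citation to \cite[Theorem 7.15.7]{stanley}, and what is proved there is the \emph{product} form $h_k s_\nu = \sum_\kappa s_\kappa$ (summed over $\kappa \supseteq \nu$ with $\kappa/\nu$ a horizontal strip of size $k$). So your write-up supplies exactly the bridge the paper leaves implicit: the passage from the product rule to the stated skew rule via adjointness. Your steps are sound and, helpfully, they reuse the paper's own infrastructure: the coproduct identity $\Delta(s_\lambda) = \sum_{\nu \subseteq \lambda} s_\nu \otimes s_{\lambda/\nu}$ is the Schur analogue of Lemma \ref{skewing thingy of g lambda} (proved by the same alphabet-splitting argument), the identity $s_{(k)}^{\perp}(s_\lambda) = s_{\lambda/(k)}$ mirrors Theorem \ref{skewing little g}, and Lemma \ref{skewingProperties1} applies with $f = s_{(k)} = h_k$, $g = s_\nu$, $a = s_\lambda$, since all of these lie in $\Lambda \subseteq \hat{\Lambda}$. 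Extracting coefficients against the orthonormal Schur basis then converts $\langle h_k s_\nu, s_\lambda \rangle \in \{0,1\}$ into the stated expansion of $s_{\lambda/(k)}$, and the $e_k$/vertical-strip case is the same argument verbatim. Deferring the product form itself to the citation is appropriate, as that is precisely what the paper's reference covers, so there is no gap; the RSK-insertion sketch you offer for it is optional extra detail rather than a needed repair.
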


\begin{theorem}
    If $s_{\lambda/(k)} = s_{\lambda/(1^k)}$ for all nonnegative integers $k$, then $\lambda = \rho_n = (n, n-1, \dots, 1)$ for some nonnegative integer $n$.
\end{theorem}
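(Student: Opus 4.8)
The plan is to prove the contrapositive in a structured way: assuming $s_{\lambda/(k)} = s_{\lambda/(1^k)}$ for all $k$, I will show $\lambda$ must be a staircase. The key tool is Pieri's Rule (Theorem \ref{pieri}), which translates the two sides into sums over horizontal and vertical strips respectively. Since $s_{\lambda/(k)}$ counts horizontal strips $\lambda/\nu$ of size $k$ and $s_{\lambda/(1^k)}$ counts vertical strips of size $k$, the equality of these symmetric functions forces a bijection between the two families. I expect to extract information by comparing the number of parts and the shapes of the resulting $\nu$'s, or more cleverly by using the conjugation symmetry $s_{\lambda/(1^k)} = s_{\lambda^T/(k)}^{\text{conjugated}}$, but the cleanest route is likely to test the hypothesis against specific $k$ and specific monomial or Schur coefficients.

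First, I would observe that removing a horizontal strip of size $k$ and removing a vertical strip of size $k$ produce genuinely different collections of shapes unless $\lambda$ has a very constrained form. A natural way to pin down $\lambda$ is to look at $k=1$: both $s_{\lambda/(1)}$ count single-box removals, so this gives no information, but as $k$ grows the constraints tighten. The decisive step is to consider the longest horizontal strip versus the longest vertical strip that can be removed. The maximal horizontal strip removable from $\lambda$ has size equal to the number of \emph{distinct} column-contributions along the bottom boundary, while the maximal vertical strip relates to the conjugate. Matching the top-degree or extremal terms should force $\lambda_i - \lambda_{i+1} \le 1$ and simultaneously $\lambda^T_i - \lambda^T_{i+1} \le 1$, which together characterize exactly the staircase partitions $\rho_n$.

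Concretely, I would argue that $\lambda$ must be \emph{self-conjugate-like} in its gap structure: a partition $\lambda$ is a staircase iff consecutive parts differ by exactly $1$ and the last part is $1$. To see the equality fails otherwise, suppose $\lambda$ is not a staircase; then either some consecutive difference $\lambda_i - \lambda_{i+1} \ge 2$ or $\lambda_i - \lambda_{i+1} = 0$ (a repeated part) occurs, or the final part exceeds $1$ or the shape is too short. In each such case I would exhibit a specific $k$ and a specific Schur function $s_\nu$ appearing with different multiplicity on the two sides, using that horizontal strips forbid two boxes in one column while vertical strips forbid two boxes in one row. For instance, a repeated part $\lambda_i = \lambda_{i+1}$ obstructs certain horizontal-strip removals that have no vertical-strip counterpart (or vice versa), breaking the bijection. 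Tracking a single well-chosen coefficient discrepancy suffices, since equality of symmetric functions requires equality of all Schur coefficients.

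The main obstacle I anticipate is organizing the case analysis cleanly rather than the individual verifications, which are routine once the right $k$ and $\nu$ are selected. The subtlety is choosing an invariant that (i) is computable from Pieri's Rule, (ii) detects every deviation from the staircase shape, and (iii) does not require checking infinitely many $k$ by hand. I would aim to reduce everything to a single extremal value of $k$ — for example $k$ equal to the number of nonempty rows of $\lambda$, where the removable vertical strip of that size is essentially unique, and compare it against horizontal strips of the same size — so that one comparison simultaneously forces both $\lambda_i - \lambda_{i+1} = 1$ for all $i$ and $\ell(\lambda) = \lambda_1$, yielding $\lambda = \rho_n$.
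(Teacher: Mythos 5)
Your proposal is correct and takes essentially the same route as the paper: translate both sides via Pieri's rule, compare coefficients in the Schur basis (each $\nu$ appears with coefficient $0$ or $1$, so the horizontal-strip and vertical-strip families must coincide), and detect any deviation from the staircase by a targeted strip discrepancy --- in particular, a repeated part yields a two-box vertical strip that is not a horizontal strip, which is exactly the paper's contradiction. The only cosmetic difference is how $\ell(\lambda) = \lambda_1$ is forced: the paper uses the vanishing of $s_{\lambda/(k)}$ versus $s_{\lambda/(1^k)}$ for $k$ beyond the number of columns (resp.\ rows), while you compare at the single extremal value $k = \ell(\lambda)$, where the vertical strip is unique; both are extremal-$k$ arguments of the same kind.
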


\begin{proof}
    Note that $s_{\lambda/(k)}$ is zero if and only if $k$ is greater than the number of columns in the Young diagram of $\lambda,$ and $s_{\lambda/(1^k)}$ is zero if and only if $k$ is greater than the number of rows in the Young diagram of $\lambda.$ So we require that $Y(\lambda)$ has the same number of rows as columns; let this number be $n$.
    
    We then require that $\lambda/\nu$ referenced in Theorem \ref{pieri} (Pieri’s Rule) is a horizontal strip of size $k$ if and only if it is a vertical strip of size $k$, since the $s_{\nu}$ form a basis of $\Lambda$.
    
    For the sake of contradiction, suppose that $\lambda$ contains two consecutive parts of the same size. Then there exists some $\nu$ such that $\lambda/\nu$ consists of the rightmost box of these two rows. However, then $\lambda/\nu$ forms a vertical strip of length 2 but not a horizontal strip, which is a contradiction. 
    
    Combining the fact that $\lambda$ has $n$ rows and $n$ columns and that no two rows have the same size, we have that $\lambda$ must be $\rho_n = (n, n-1, \dots, 1)$ for some nonnegative integer $n$, as desired.
\end{proof}

\begin{corollary}\label{converse of stembridge}
    If for some partition $\lambda,$ $s_{\lambda/\mu} = s_{\lambda/\mu^T}$ for all partitions $\mu$, then $\lambda = \rho_n = (n, n-1, \dots, 1)$ for some nonnegative integer $n$.
\end{corollary}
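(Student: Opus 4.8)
The plan is to derive this corollary directly from the preceding theorem by specializing the hypothesis to a single family of partitions. Since the hypothesis $s_{\lambda/\mu} = s_{\lambda/\mu^T}$ is assumed to hold for \emph{all} partitions $\mu$, I am free to choose $\mu$ conveniently. The key elementary observation is that the conjugate of the single-row partition $(k)$ is the single-column partition $(1^k)$; that is, $(k)^T = (1^k)$, which is immediate from the definition of conjugation (reflecting the Young diagram of $(k)$, a horizontal strip of $k$ cells, across the main diagonal yields a vertical strip of $k$ cells).

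First I would instantiate the hypothesis at $\mu = (k)$ for each nonnegative integer $k$. This yields
\[
s_{\lambda/(k)} = s_{\lambda/(k)^T} = s_{\lambda/(1^k)}
\]
for every $k$. Thus the full family of equalities, restricted to horizontal strips versus vertical strips, reduces exactly to the hypothesis of the theorem immediately preceding this corollary.

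Having established $s_{\lambda/(k)} = s_{\lambda/(1^k)}$ for all $k$, the final step is simply to invoke that theorem, which concludes that $\lambda = \rho_n = (n, n-1, \dots, 1)$ for some nonnegative integer $n$. There is no substantive obstacle here: the entire content of the corollary is that the apparently weaker-looking single-row specialization of the general conjugation symmetry already forces $\lambda$ to be a staircase, and all the real work (the Pieri-rule argument ruling out two equal consecutive parts, and the row/column count matching) has been carried out in the theorem. The only point requiring any care is verifying the identity $(k)^T = (1^k)$, which is transparent from the diagrams.
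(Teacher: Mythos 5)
Your proof is correct and matches the paper's (implicit) argument exactly: the paper states this corollary immediately after the theorem precisely because it follows by specializing the hypothesis to $\mu = (k)$, using $(k)^T = (1^k)$, and then invoking the theorem. Nothing further is needed.
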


In Section 2.8, we will extend this converse to the skew stable and skew dual stable Grothendieck polynomials.

\subsection{Statement of the Problem}
Now, we are ready to introduce our first main result, Theorem \ref{main result small g}, an analogue of the Stembridge equality for the dual stable Grothendieck polynomials, which states that for $\rho = (n, n-1, \dots, 1),$
\begin{align*}
    g_{\rho/\mu} = g_{\rho/\mu^T}.
\end{align*}
We first prove a special case of this theorem, Lemma \ref{straight shape}, for when $\mu$ is the partition $(k)$ or $(k)^T = (1^k),$ in Section 3.1 using a bijection between set-valued tableaux. Then we extend this to general $\mu$ using the stable Grothendieck polynomials and Hopf algebraic structure of the symmetric functions in Section 3.2.

Our second main result, Theorem \ref{main result big g}, is an analogue of the Stembridge equality for the stable Grothendieck polynomials, stating that
\begin{align*}
    G_{\rho/\mu} = G_{\rho/\mu^T}.
\end{align*}
Similar to the dual stable Grothendieck polynomial case, we will first prove this theorem for $\mu = (k)$ or $(1^k)$ in Section 4.1 by finding a bijection between set-valued tableaux. Then, we use the Hopf algebraic structure to extend to general $\mu$ in Section 4.2.

\begin{example}
Consider $\rho = (3, 2, 1)$ and $\mu = (2)$. The diagrams for $\rho/\mu$ and $\rho/\mu^T$ are below.

 \begin{center}
    \ytableausetup{notabloids, boxsize = 2em}
    \begin{ytableau}
    \none & \none & \\
     & \\
    
    \end{ytableau} \quad $\iff$ \quad \begin{ytableau}
    \none &  & \\
    \none & \\
    \color{white} . 
    \end{ytableau}
\end{center}

For $\rho/\mu$, the top right section does not share any columns with the rest of the diagram, so the number occupying the top right square is unconstrained by the remainder of the diagram. Then 
$$g_{\rho / \mu} = g_{21}\cdot g_1,$$ the product of the two symmetric functions.
The same argument holds for $\rho / \mu^T,$ since the bottom left section is independent of the top right section, and so
$$g_{\rho/\mu^T} = g_1\cdot g_{21},$$ and the two polynomials are equal. 
\end{example}

We can also prove the converses of our main results (Theorems \ref{main result big g} and \ref{main result small g}) by extending Corollary \ref{converse of stembridge}.
    
    \begin{theorem}\label{converse for g's}
      If $g_{\rho/\mu} = g_{\rho/\mu^T}$ or $G_{\rho/\mu} = G_{\rho/\mu^T},$ for all $\mu,$ then $\rho = (n, n-1, \dots, 1)$ for some nonnegative integer $n$.
      
    \end{theorem}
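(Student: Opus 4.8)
The plan is to reduce both statements to the already-established converse for Schur polynomials, Corollary \ref{converse of stembridge}, by passing to an appropriate graded component. The key structural fact is that $s_{\rho/\mu}$ appears as a distinguished homogeneous piece of each Grothendieck polynomial: by Remark \ref{s is highest} it is the top-degree component of $g_{\rho/\mu}$, and by Remark \ref{s is lowest} it is the bottom-degree component of $G_{\rho/\mu}$. Since two equal symmetric formal power series have equal homogeneous components in every degree, extracting the correct component from each hypothesized identity should transfer it to an identity of skew Schur polynomials.

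First I would treat the dual stable case. Suppose $g_{\rho/\mu} = g_{\rho/\mu^T}$ for all $\mu \subseteq \rho$. Conjugation preserves size, so $|\rho/\mu| = |\rho| - |\mu| = |\rho| - |\mu^T| = |\rho/\mu^T|$, and the top-degree components of the two sides both lie in degree $|\rho| - |\mu|$. Extracting that top-degree component from both sides and invoking Remark \ref{s is highest} yields $s_{\rho/\mu} = s_{\rho/\mu^T}$ for all $\mu \subseteq \rho$.

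The stable case is symmetric. Suppose $G_{\rho/\mu} = G_{\rho/\mu^T}$ for all $\mu$; although each $G_{\rho/\mu}$ has terms of arbitrarily large degree, its bottom-degree component is well-defined and finite, again living in degree $|\rho| - |\mu| = |\rho| - |\mu^T|$. Extracting the lowest-degree component and applying Remark \ref{s is lowest} gives $s_{\rho/\mu} = s_{\rho/\mu^T}$ for all $\mu$. In either case, applying Corollary \ref{converse of stembridge} with $\lambda = \rho$ forces $\rho = (n, n-1, \dots, 1)$ for some nonnegative integer $n$.

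I do not anticipate a genuine obstacle here, since the argument is essentially a one-line consequence of the graded structure once the pieces are in place. The only point requiring care is the degree bookkeeping: I must confirm that conjugation preserves $|\mu|$ so that the distinguished components of the two sides live in the same degree and can be compared directly, and that taking a fixed-degree component is compatible with the infinite-degree nature of $G_{\rho/\mu}$ in the stable case. After these checks the result is immediate from the cited Schur-polynomial converse.
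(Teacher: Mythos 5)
Your proposal is correct and is essentially the paper's own argument: the paper likewise extracts $s_{\rho/\mu}$ as the top-degree component of $g_{\rho/\mu}$ (Remark \ref{s is highest}) and the bottom-degree component of $G_{\rho/\mu}$ (Remark \ref{s is lowest}) and then invokes Corollary \ref{converse of stembridge}. Your version merely makes explicit the degree bookkeeping (that $|\mu| = |\mu^T|$ puts the distinguished components in the same degree), which the paper leaves implicit.
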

    \begin{proof}
        As stated in Remarks \ref{s is highest} and \ref{s is lowest}, the equalities $g_{\rho/\mu} = g_{\rho/\mu^T}$ and $G_{\rho/\mu} = G_{\rho/\mu^T}$ both require the Stembridge equality $s_{\rho/\mu} = s_{\rho/\mu^T}$, which in turn requires $\rho = (n, n-1, \dots, 1)$ for some nonnegative integer $n$.
    \end{proof}

\section{Proof for Dual Stable Grothendieck Polynomials}
In this section we prove Theorem \ref{main result small g}, by first proving with the special case when $\mu = (k)$ or $(1^k)$ combinatorially, and then generalizing to arbitrary $\mu$ using the Hopf algebraic structure of the symmetric functions.

\subsection{Proof for $\mu = (k)$ or $(1^k)$}
Our proof for Lemma \ref{straight shape} makes use of the skewing operator $\perp,$ described in Section 2.6, and Theorem \ref{buch} by Buch \cite{buch}.

\begin{definition}
    Let $w(T)$ denote the \textit{reverse reading word} of a set-valued tableau $T$, read top to bottom along a column, starting with the rightmost column and moving left, and with the elements within a box read largest to smallest.\footnote{Here our definition of $w(T)$ is the reverse of the $w(T)$ as defined in \cite{buch}.} 

\end{definition}
For example, the following set-valued tableau has a reverse reading word of 743252153636542.
    \begin{center}
    \ytableausetup{notabloids, boxsize = 3em}
    \begin{ytableau}
    \none & \none & 1,2 & 2,3,4 & 7\\
    \none & 3 & 3,5 & 5\\
    2 & 4,5,6 & 6
    \end{ytableau}
\end{center}
    
\begin{definition}
    A reverse reading word is a \textit{lattice word} if the $i$th instance of $a+1$ comes after the $i$th instance of $a$ for all positive integers $i$ and $a$. The \textit{content} of a word is $(w_1, w_2, \dots)$ where $w_i$ is the number of times that $i$ appears in the word.
\end{definition}

For example, 1121322 is a lattice word, but 121221 is not.

\begin{definition}
    Let $\nu\ast\mu$ denote the skew shape formed by joining the partitions $\nu$ and $\mu$ such that the top right corner of $\mu$ touches the bottom left corner of $\nu$.\footnote{Our $\nu \ast \mu$ is the $\mu \ast \nu$ of \cite{buch}.} 
\end{definition} 

For example, we have $(2, 1) \ast (4) = (6, 5, 4) /(4, 4).$
\begin{center}
\begin{center}
    \ytableausetup{notabloids, boxsize = 1.2em}

    \ydiagram{2, 1}\quad  $\ast$ \quad \ydiagram{4}  \quad = \quad \ydiagram{4 + 2, 4 + 1, 4}
\end{center}
\end{center}

Next, we have the following theorem, a Littlewood-Richardson rule for stable Grothendieck polynomials, as shown by Buch (\cite{buch}, Theorem 5.4).

\begin{theorem}[Buch]\label{buch}

    Let $\nu$ and $\mu$ be two partitions. Then,
\[G_{\nu}G_{\mu} = \sum_{\lambda}(-1)^{|\lambda| - |\nu| - |\mu|}c_{\nu\mu}^{\lambda}G_{\lambda},\]
where $c_{\nu\mu}^{\lambda}$ is the number of set-valued tableaux $T$ of shape $\mu\ast\nu$ such that $w(T)$ is a lattice word with content $\lambda$.\footnote{Here our $c_{\nu\mu}^{\lambda}$ is in fact $(-1)^{|\lambda| - |\nu| - |\mu|}c_{\nu\mu}^{\lambda}$ as defined in \cite{buch}. Our Theorem 3.4 as stated here is equivalent to Theorem 5.4 in \cite{buch}, as our $w(T)$ is the reverse and our $\nu \ast \mu$ is flipped.} 
\end{theorem}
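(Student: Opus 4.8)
The plan is to reduce the product formula to a statement about a single \emph{skew} stable Grothendieck polynomial and then to prove that skew expansion combinatorially. The first step is to recognize the left-hand side as a skew polynomial of a disconnected shape. In the shape $\mu \ast \nu$ the $\mu$-part and the $\nu$-part share no row and no column, so a set-valued tableau $T$ of this shape factors uniquely as a pair $(A,B)$, with $A$ filling $\mu$ and $B$ filling $\nu$ under no mutual constraint. Since $x^{T} = x^{A}x^{B}$, $|T| = |A| + |B|$, and $|\mu \ast \nu| = |\mu| + |\nu|$, the monomials and the signs factor, giving $G_{\mu \ast \nu} = G_{\mu}G_{\nu} = G_{\nu}G_{\mu}$. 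Thus it suffices to prove the general skew rule
\[
G_{\sigma/\tau} = \sum_{\lambda} (-1)^{|\lambda| - |\sigma/\tau|}\, c_{\sigma/\tau}^{\lambda}\, G_{\lambda},
\]
where $c_{\sigma/\tau}^{\lambda}$ is the number of set-valued tableaux of shape $\sigma/\tau$ whose reverse reading word is a lattice word of content $\lambda$; taking $\sigma/\tau = \mu \ast \nu$ recovers the theorem.

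Next I would set up the coefficient extraction. Using the duality $\langle G_{\kappa}, g_{\lambda}\rangle = \delta_{\kappa\lambda}$, the coefficient of $G_{\lambda}$ in $G_{\sigma/\tau}$ is $\langle G_{\sigma/\tau}, g_{\lambda}\rangle$, so the skew rule is equivalent to a term-by-term identity of generating functions over set-valued tableaux. A convenient bookkeeping observation is that a set-valued tableau with lattice reverse reading word of content $\lambda$ has total size $|T| = |\lambda|$, so that the prefactor $(-1)^{|\lambda| - |\sigma/\tau|}$ matches the intrinsic sign $(-1)^{|T| - |\sigma/\tau|}$ in the definition of $G_{\sigma/\tau}$. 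This shows the signs are consistent and reduces the problem to a purely enumerative matching.

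The combinatorial heart, and the step I expect to be the main obstacle, is to show that the signed generating function $\sum_{T}(-1)^{|T|-|\sigma/\tau|}x^{T}$ over set-valued tableaux of shape $\sigma/\tau$ decomposes, along a rectification statistic, as $\sum_{\lambda}(\#\,\text{fibers})\,G_{\lambda}$. In the classical Schur setting this is exactly jeu de taquin: sliding every semistandard filling of $\sigma/\tau$ to straight shape partitions the tableaux so that each fiber over the highest-weight tableau of shape $\lambda$ consists precisely of the lattice fillings and contributes $s_{\lambda}$. I would develop the $K$-theoretic analogue, a $K$-jeu de taquin (equivalently a Hecke-insertion) rectification on set-valued tableaux, whose \emph{confluence}---independence of the order of slides---is the delicate point, together with the identification of the set-valued tableaux that rectify to the superstandard tableau of shape $\lambda$ as exactly those with lattice reverse reading word of content $\lambda$. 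Granting confluence, grouping the skew tableaux by rectification shape and matching each class with $G_{\lambda}$ yields the rule, and restricting to lowest degree recovers the classical Littlewood--Richardson rule as a sanity check. If confluence is awkward to establish directly, an alternative is a crystal/coplactic structure on set-valued tableaux whose highest-weight elements of weight $\lambda$ are the lattice-word tableaux; in either route, proving that the lattice-word count is invariant under the governing combinatorial moves is the crux, while the reductions in the first two steps are routine.
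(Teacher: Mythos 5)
First, a point of comparison: the paper does not prove this statement at all---it is imported directly from Buch (Theorem 5.4 of \cite{buch}), with the footnote only translating conventions (reversed reading word, flipped $\ast$). So your proposal is not paralleling an argument in the paper; it is attempting to reprove Buch's Littlewood--Richardson rule from scratch. Your two preliminary reductions are correct and essentially free: since the $\mu$-part and $\nu$-part of $\mu\ast\nu$ share no row or column, every set-valued filling factors independently and $G_{\mu\ast\nu}=G_{\mu}G_{\nu}$; and the sign bookkeeping is right, since a tableau whose reading word has content $\lambda$ satisfies $|T|=|\lambda|$, so $(-1)^{|T|-|\mu\ast\nu|}=(-1)^{|\lambda|-|\mu|-|\nu|}$. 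But these steps carry none of the content of the theorem: the entire difficulty is the skew expansion $G_{\sigma/\tau}=\sum_{\lambda}(-1)^{|\lambda|-|\sigma/\tau|}c^{\lambda}_{\sigma/\tau}G_{\lambda}$, which you explicitly defer (``granting confluence\dots yields the rule''). As written, the proposal is a plan, not a proof.

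Moreover, the specific mechanism you propose for the crux is known to fail. $K$-theoretic jeu de taquin (Thomas--Yong) is \emph{not} confluent in general: the rectification of a tableau genuinely depends on the order in which inner corners are slid, and well-definedness holds only for special targets---superstandard tableaux are ``unique rectification targets'' (Thomas--Yong; studied further by Buch--Samuel)---so the statement ``prove confluence'' cannot be established because it is false. The correct route through this machinery requires the weaker unique-rectification-target property, plus a nontrivial translation between the set-valued tableaux appearing in the definition of $G_{\sigma/\tau}$ and the increasing tableaux on which $K$-jdt actually operates (via Hecke insertion); note also that $K$-slides can merge entries, so your implicit assumption that fibers of rectification are weight-equidistributed needs separate justification. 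Buch's own proof in \cite{buch} avoids jeu de taquin entirely: he develops a reversible column-insertion algorithm for set-valued tableaux and a plactic-style algebra of words, from which the rule follows bijectively. Your crystal-structure alternative does exist in later literature (crystals on set-valued tableaux), but it too is a substantial theorem, not a routine fallback. In short: the reductions are sound, but the heart of the argument is missing, and the named strategy for it rests on a false premise.
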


A \textit{valid filling} of a set-valued tableau $T$ is a filling such that $w(T)$ is a lattice word with content $\rho_n=(n, n-1,\dots,1)$ for some $n$. 

\begin{lemma}\label{contents of nu}
    In a valid filling of $\nu\ast\mu$, all boxes in the $i$th row of $\nu$ contain the set $\{i\}$.
\end{lemma}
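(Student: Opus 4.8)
The plan is to isolate the $\nu$-part of the tableau, sandwich every entry between two bounds, and force equality. The first observation is that in $\nu\ast\mu$ the boxes of $\nu$ occupy columns strictly to the right of every box of $\mu$ (the two pieces meet only at a corner). Since the reverse reading word reads columns from right to left, all of $\nu$ is read before any box of $\mu$; hence the restriction of $w(T)$ to the boxes of $\nu$ is a prefix of $w(T)$, and a prefix of a lattice word is again a lattice word. So it suffices to show: any set-valued tableau of the straight shape $\nu$ whose reverse reading word is a lattice word has its $i$th row filled entirely with $\{i\}$. In particular the global content $\rho_n$ plays no role beyond guaranteeing the lattice condition.

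Next I would establish a lower bound from column-strictness. In a column the boxes strictly increase in the sense $A<B \iff \max A<\min B$, so if $A_1<A_2<\cdots$ are the boxes of a column then $\min A_j\ge j$ by induction down the column. Consequently every entry in the $i$th row of $\nu$ is at least $i$.

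The heart of the argument is the matching upper bound, which I would prove by induction on the row index $i$, with inductive hypothesis that rows $1,\dots,i-1$ consist entirely of the singletons $\{1\},\dots,\{i-1\}$. The key geometric point is that, because $\nu$ is a partition, any column strictly to the right of column $\nu_i$ can only reach rows above row $i$; by the inductive hypothesis all such boxes are singletons with values in $\{1,\dots,i-1\}$. Now consider the rightmost box $R_i$ of row $i$ and the moment its maximum $m$ is read. Everything read before it lies either in columns right of $\nu_i$ (values $\le i-1$) or in the boxes of column $\nu_i$ above row $i$ (again the singletons $\{1\},\dots,\{i-1\}$). If $m>i$, then neither $m$ nor $m-1$ (which is $\ge i$) has appeared yet, so at this moment the count of $m$ exceeds that of $m-1$, contradicting the lattice property. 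Hence $\max R_i=i$, and with the lower bound $R_i=\{i\}$. Finally, weak increase along row $i$ forces every box $B$ to its left to satisfy $\max B\le\min R_i=i$, so $B=\{i\}$, completing the induction.

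I expect the upper bound to be the only real obstacle: in the set-valued setting a box may hold several integers and the reverse reading order interleaves different rows, so one cannot simply quote the classical uniqueness of the highest-weight (Yamanouchi) tableau. The device that removes the difficulty is to look at the rightmost box of each row, where the partition shape guarantees there is nothing to its right at its own level, so the lattice condition has no room to support an entry larger than the row index.
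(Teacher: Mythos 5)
Your proof is correct and follows essentially the same route as the paper's: an induction on the row index in which column-strictness gives the lower bound $\min \ge i$, the lattice condition applied at the rightmost box of row $i$ gives the matching upper bound, and weak increase along rows propagates $\{i\}$ to the whole row. The paper's proof is terser but rests on exactly these observations; your explicit reduction to the prefix of $w(T)$ supported on $\nu$ merely makes precise what the paper leaves implicit.
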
 

\begin{proof}
    The rightmost box in the first row of $\nu$ must contain the set $\{1\}$, because a lattice word must begin with 1, so all boxes in the first row contain the set $\{1\}$, as rows are increasing left to right. The rightmost box in the second row may only contain numbers greater than 1, and in order for $\nu$'s reading word to be a lattice word, this box must contain the set $\{2\}$. Thus, all boxes of the second row contain the set $\{2\}$. Analogously, we may inductively show that all boxes in the $i$th row of $\nu$ contain the set $\{i\}$.
\end{proof}

If $w(T) = \rho_n,$ the integer $j$ appears in the filling $n-j + 1$ times. 
Since the columns in a set-valued tableau are strictly increasing, there is at most one $j$ in each column of $\nu\ast\mu$. In particular, in a valid filling of $\nu\ast (1^k)$, $(1^k)$ contains at most one of each of the numbers $1, 2, \dots, n$. In fact, the same holds for a valid filling of $\nu\ast (k)$.

\begin{lemma}\label{contents of k}
    In a valid filling of $\nu\ast (k)$ with content $\rho_n$, the filling of $(k)$ contains at most one of each of the numbers $1, 2, \dots, n$.
\end{lemma}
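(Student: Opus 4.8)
The plan is to reduce the statement to a count and then extract that count from the lattice condition by examining one well-chosen prefix of the reading word. By Lemma~\ref{contents of nu}, every box in the $i$th row of $\nu$ equals $\{i\}$, so the $\nu$ part of any valid filling contributes exactly $\nu_i$ copies of each value $i$. Since the whole filling has content $\rho_n$, the number of copies of a value $a$ lying in the $(k)$ part is forced to be $r_a \coloneqq (n-a+1) - \nu_a$. Thus the statement that $(k)$ contains at most one of each of $1,\dots,n$ is exactly the claim that $r_a \le 1$ for every $a$, which I would prove by showing $\nu_a \ge n-a$ whenever $(k)$ actually uses the value $a$.

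The key observation is that the portion of the reverse reading word coming from $(k)$ is weakly decreasing. Indeed, $(k)$ is a single row, so its boxes $B_1 \le B_2 \le \cdots \le B_k$ satisfy $\max B_i \le \min B_{i+1}$; the reverse reading word reads them right to left as $B_k, B_{k-1}, \dots, B_1$ with each box in decreasing order, and the inequality $\max B_i \le \min B_{i+1}$ guarantees that no ascent occurs across consecutive boxes. Hence, writing $w(T) = w_\nu \cdot w_{(k)}$ where $w_\nu$ is the reading word of the $\nu$ part, the suffix $w_{(k)}$ lists its values in weakly decreasing order.

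Now fix a value $a$ that appears in $(k)$ and consider the prefix of $w(T)$ ending just before the first occurrence of $a$ contributed by $(k)$. Because $w_{(k)}$ is weakly decreasing, every copy of $a+1$, from both $\nu$ and $(k)$, has already been read at this prefix, so the running count of $a+1$ equals its total $(\rho_n)_{a+1} = n-a$, while the running count of $a$ is still just $\nu_a$. The lattice condition demands that the count of $a$ be at least the count of $a+1$, giving $\nu_a \ge n-a$ and therefore $r_a = (n-a+1)-\nu_a \le 1$. For any value not appearing in $(k)$ we trivially have $r_a = 0$, so every value occurs at most once in $(k)$, as desired.

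The main obstacle is locating the argument at all: the naive bound from column-strictness used for $\nu \ast (1^k)$ is unavailable here, since the boxes of $(k)$ lie in distinct columns with no cells of $\nu$ above them. The crux is instead the monotonicity of $w_{(k)}$, which lets me choose precisely the prefix at which the constraint $\operatorname{count}(a) \ge \operatorname{count}(a+1)$ becomes binding, namely the moment all larger values have been exhausted but the value $a$ has not yet been introduced from $(k)$.
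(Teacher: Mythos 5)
Your proof is correct and rests on the same key insight as the paper's own proof: the reverse reading word of the $(k)$ part is weakly decreasing, so every copy of $a+1$ precedes the critical copy of $a$, and with content $\rho_n$ this forces the lattice condition to bind. The paper packages this as a contradiction (two $i$'s in $(k)$ put the $(n-i)$th $i+1$ before the $(n-i)$th $i$) while you run the count directly on a prefix using Lemma~\ref{contents of nu}, but the underlying argument is essentially identical.
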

\begin{proof}
    For the sake of contradiction, suppose that the filling of the shape $(k)$ contains at least two $i$'s. Note that $i\le n-1$ because the valid filling with content $\rho_n$ contains only one $n$. Then in the reverse reading word of the filling, all $i+1$'s are listed before the second-to-last $i$. In other words, the $(n-i)$th $i+1$ is listed before the $(n-i)$th $i$, contradicting the assumption that the reverse reading word of the filling of $\nu\ast (k)$ is a lattice word.
\end{proof}

\begin{theorem}\label{straightshapec's}
    We have $c^{\rho}_{(k)\nu} = c^{\rho}_{(1^k)\nu}$ for all positive integers $k\le n$.
\end{theorem}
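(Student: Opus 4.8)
The plan is to show that, once the filling of the $\nu$-part is pinned down, both $c^{\rho}_{(k)\nu}$ and $c^{\rho}_{(1^k)\nu}$ count the \emph{same} set of distributions, so that equality is immediate. By Lemma \ref{contents of nu}, in any valid filling of $\nu\ast(k)$ or $\nu\ast(1^k)$ the $i$th row of $\nu$ is forced to be $\{i\}$, so all freedom lies in the appended shape. By Lemma \ref{contents of k} (for the row) and the strict-increase-along-columns rule (for the single column $(1^k)$), the appended part contains each integer at most once; combined with the requirement that the total content be $\rho_n$, this forces the appended part to consist of exactly one copy of each integer in a fixed set $S$ depending only on $\nu$, namely $S=\{j:\nu_j=n-j\}$. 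This set is well defined precisely when $\rho_{n-1}\subseteq\nu\subseteq\rho_n$ (if some $\nu_j>n-j+1$ then $\nu$ alone over-counts $j$, and if some $\nu_j<n-j$ with $j\le n-1$ then the appended part would need two copies of $j$); outside this band both coefficients vanish. Crucially, $S$ is the same in the row and column cases.

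Once $S$ is fixed, a filling of the row $(k)$ or the column $(1^k)$ is exactly a way of splitting the elements of $S$, in increasing order, into $k$ nonempty consecutive blocks, one per box, so there are $\binom{|S|-1}{k-1}$ of them in either case. Thus it suffices to prove that \emph{every} such filling is valid, i.e.\ that $w(T)$ is a lattice word. Since $\nu$ occupies the rightmost columns, $w(T)$ factors as $w(\nu\text{-part})\cdot w(\text{appended part})$, and $w(\nu\text{-part})$ is the standard column word of $\nu$, which is always a lattice word. For the row, reading right-to-left and largest-to-smallest makes $w(\text{appended part})$ equal to $S$ in strictly decreasing order, \emph{independent} of the split; for the column it reads each block decreasingly with the blocks in increasing order, which \emph{does} depend on the split.

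I would then check the lattice condition incrementally as the elements of $S$ are appended, tracking the count of each value. The key computation is that when a value $a\in S$ is appended, the current count of $a-1$ is at least the new count of $a$: this follows from $\nu_{a-1}\ge n-(a-1)=(n-a)+1$, which holds because $\rho_{n-1}\subseteq\nu$, together with the fact that in both reading orders $a-1$ is never appended strictly before $a$ within the same block. Since appending $a$ can only threaten the inequality $\#\{a-1\}\ge\#\{a\}$, this shows every filling, row or column, yields a lattice word, whence $c^{\rho}_{(k)\nu}=\binom{|S|-1}{k-1}=c^{\rho}_{(1^k)\nu}$. The main obstacle is exactly this asymmetry: the row word is identical for every split while the column word varies with the split, so the equality is not a literal identification of reading words but the stronger statement that \emph{all} splits are valid in both cases; making this rigorous requires the band condition $\rho_{n-1}\subseteq\nu\subseteq\rho_n$ and the step-by-step lattice verification above.
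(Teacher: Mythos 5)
Your proposal is correct and takes essentially the same route as the paper: both arguments use Lemma \ref{contents of nu} and Lemma \ref{contents of k} to force the filling of the $\nu$-part and to show the appended entries are distinct, and then make the same key verification that concatenating those entries in \emph{any} order after $w(\nu)$ preserves the lattice property (via the count inequality $\nu_{a-1}\ge n-a+1$). The only cosmetic difference is that you count both sides explicitly as $\binom{|S|-1}{k-1}$, whereas the paper packages the identical fact as a bijection given by rotating the row $(k)$ into the column $(1^k)$ while keeping the box contents fixed.
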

\begin{proof}

Considering Lemma \ref{contents of nu} and Lemma \ref{contents of k}, there is at most one $i$ in the filling of $(k)$ (resp. $(1^k)$) in a valid filling of $\nu\ast (k)$ (resp. $\nu\ast (1^k)$) with content $\rho_n$. This means that filling of the $i$th row of $\nu$ contains exactly $n-i+1$ or $n-i$ $i$'s. 
If $i+1$ is in the filling $(k)$ or $(1^k)$, it must be the $(n-i)$th occurrence of the value $i+1$ in the valid filling of $\nu\ast (k)$ or $\nu\ast (1^k)$, respectively. Since there are at least $n-i$ occurrences of the value $i$ found in $\nu$, any arrangement of the numbers in $(k)$ concatenated after the reverse reading word of $\nu$ will form a lattice word.

So given a valid filling of $\nu\ast (k)$, we can obtain a corresponding valid filling of $\nu\ast (1^k)$ upon rotating $(k)$ by 90 degrees clockwise. Similarly, given a valid filling of $\nu\ast (1^k)$, we can obtain a corresponding valid filling of $\nu\ast (k)$ upon rotating $(1^k)$ by 90 degrees counterclockwise. Therefore, there is a bijection between valid fillings of $\nu\ast (k)$ and $\nu\ast (1^k)$, and $c^{\rho}_{(k)\nu} = c^{\rho}_{(1^k)\nu}$.
\end{proof}

For example, the following are corresponding set-valued tableaux under this bijection, for $\rho = (5, 4, 3, 2, 1)$, $\nu = (4, 4, 2, 1)$, and $(k) = (3)$.

\ytableausetup{boxsize = 1.5em}
    \[\begin{ytableau}
    \none & \none & \none & 1 & 1 & 1 & 1\\ 
    \none & \none & \none & 2 & 2 & 2 & 2\\ 
    \none & \none & \none & 3 & 3\\ 
    \none & \none & \none & 4\\ 
    1, 3 & 4 & 5
    \end{ytableau}
    \iff
    \begin{ytableau}
    \none & 1 & 1 & 1 & 1\\ 
    \none & 2 & 2 & 2 & 2\\ 
    \none & 3 & 3\\ 
    \none & 4\\ 
    1, 3 \\
    4 \\
    5
    \end{ytableau}
    \]

\begin{lemma}\label{c's to grothendieck}
Fix $\rho = (n, n-1, \dots, 1)$ and some partition $\mu \subseteq \rho$. If $c^{\rho}_{\mu\nu} = c^{\rho}_{\mu^T\nu}$ for all partitions $\nu$, then $g_{\rho/\mu} = g_{\rho/\mu^T}$.
\end{lemma}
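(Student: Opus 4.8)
The plan is to expand both $g_{\rho/\mu}$ and $g_{\rho/\mu^T}$ in the basis $\{g_\nu\}$ and show that the corresponding coefficients agree. The key observation is that $\{G_\nu\}$ and $\{g_\nu\}$ are dual bases under the extended Hall inner product, so the coefficient of $g_\nu$ in any symmetric function $f$ equals $\langle G_\nu, f\rangle$. Hence it suffices to compute $\langle G_\nu, g_{\rho/\mu}\rangle$ and $\langle G_\nu, g_{\rho/\mu^T}\rangle$ for every partition $\nu$ and check they coincide.

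First I would rewrite $g_{\rho/\mu} = G_\mu^{\perp} g_\rho$ using Theorem \ref{skewing little g}, so that $\langle G_\nu, g_{\rho/\mu}\rangle = \langle G_\nu, G_\mu^{\perp} g_\rho\rangle$. Applying the skewing adjunction of Lemma \ref{skewingProperties1} (with $f = G_\mu$ and $g = G_\nu$ in $\hat{\Lambda}$, and $a = g_\rho \in \Lambda$) converts this into $\langle G_\mu G_\nu, g_\rho\rangle$. I would then expand the product $G_\mu G_\nu$ via Buch's Littlewood--Richardson rule (Theorem \ref{buch}), writing $G_\mu G_\nu = \sum_\lambda (-1)^{|\lambda|-|\mu|-|\nu|} c^{\lambda}_{\mu\nu} G_\lambda$, and pair term-by-term with $g_\rho$ using $\langle G_\lambda, g_\rho\rangle = \delta_{\lambda\rho}$. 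Only the $\lambda = \rho$ term survives, giving $\langle G_\nu, g_{\rho/\mu}\rangle = (-1)^{|\rho|-|\mu|-|\nu|} c^{\rho}_{\mu\nu}$.

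Running the identical computation with $\mu^T$ in place of $\mu$ yields $\langle G_\nu, g_{\rho/\mu^T}\rangle = (-1)^{|\rho|-|\mu^T|-|\nu|} c^{\rho}_{\mu^T\nu}$. Finally I would invoke the hypothesis $c^{\rho}_{\mu\nu} = c^{\rho}_{\mu^T\nu}$ together with the elementary fact that conjugate partitions have equal size, $|\mu| = |\mu^T|$, so that the two signs $(-1)^{|\rho|-|\mu|-|\nu|}$ and $(-1)^{|\rho|-|\mu^T|-|\nu|}$ coincide. Thus $\langle G_\nu, g_{\rho/\mu}\rangle = \langle G_\nu, g_{\rho/\mu^T}\rangle$ for every $\nu$, and since these are precisely the $g$-expansion coefficients, $g_{\rho/\mu} = g_{\rho/\mu^T}$.

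I expect no serious obstacle in this argument, as it is essentially a dualization of the combinatorial identity $c^{\rho}_{\mu\nu} = c^{\rho}_{\mu^T\nu}$ through the Hopf-algebraic machinery already set up. The only points requiring genuine care are verifying that every pairing stays within the legal domain of the extended Hall inner product (the $G$'s lie in $\hat{\Lambda}$ while $g_\rho \in \Lambda$, so Lemma \ref{skewingProperties1} rather than Lemma \ref{skewingProperties2} is the correct tool), and tracking the sign and the index order in Buch's rule, where the decisive cancellation rests entirely on $|\mu| = |\mu^T|$.
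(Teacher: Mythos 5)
Your proposal is correct and follows essentially the same route as the paper's proof: both rewrite $g_{\rho/\mu}$ as $G_{\mu}^{\perp}g_{\rho}$ via Theorem \ref{skewing little g}, use the adjunction of Lemma \ref{skewingProperties1} to reduce to $\langle G_{\nu}G_{\mu}, g_{\rho}\rangle$, evaluate that pairing by Buch's rule (Theorem \ref{buch}) and the duality $\langle G_{\lambda}, g_{\rho}\rangle = \delta_{\lambda\rho}$, and conclude from the hypothesis together with $|\mu| = |\mu^T|$ that all pairings against the basis $\{G_{\nu}\}$ agree. The only cosmetic difference is the direction of the computation (you work from the skew function toward Buch's rule, the paper works the other way), which changes nothing of substance.
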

\begin{proof}
Assume that $c^{\rho}_{\mu\nu} = c^{\rho}_{\mu^T\nu}$ for all partitions $\nu$. 
Recall from Theorem \ref{buch} that
\[G_{\nu}G_{\mu} = \sum_{\rho}(-1)^{|\rho| - |\nu| - |\mu|}c_{\mu\nu}^{\rho}G_{\rho},\]
so 
\begin{align*}
    \langle G_{\nu}G_{\mu}, g_{\rho}\rangle &= (-1)^{|\rho|-|\nu|-|\mu|}c_{\mu\nu}^{\rho}\\
    &= (-1)^{|\rho|-|\nu|-|\mu^t|}c_{\mu^T\nu}^{\rho}\\
    &= \langle G_{\nu}G_{\mu^T}, g_{\rho}\rangle.
\end{align*}

Combining with Lemma \ref{skewingProperties1} and Theorem \ref{skewing little g}, we have 
\begin{align*}
    \langle G_{\nu}G_{\mu}, g_{\rho}\rangle &= \langle G_{\nu}G_{\mu^T}, g_{\rho}\rangle\\
    \implies\langle G_{\nu}, G_{\mu}^{\perp}g_{\rho} \rangle &= \langle G_{\nu}, G_{\mu^T}^{\perp}g_{\rho} \rangle\\
    \implies \langle G_{\nu}, g_{\rho/\mu} \rangle &= \langle G_{\nu}, g_{\rho/\mu^T} \rangle
\end{align*}
for all partitions $\nu$.
Since the $G_{\nu}$ form a basis for $\hat{\Lambda}$, we conclude that $g_{\rho/\mu}  = g_{\rho/\mu^T}.$
\end{proof}

Now, we give the proof of Lemma $\ref{straight shape}$.
\begin{proof}
    By Theorem \ref{straightshapec's}, we have that $c^{\rho}_{\nu (k)} = c^{\rho}_{\nu (1^k)}$. Then Lemma \ref{c's to grothendieck} gives $g_{\rho/(k)} = g_{\rho/(1^k)}$.
\end{proof}

\subsection{Proof for All Partitions}

Recall from Section 2.5 that $\hat{\Lambda}$ is the completion of $\Lambda$, the ring of symmetric functions. 

We take the linear map $\tau: \hat{\Lambda} \to \hat{\Lambda}$, given by continuously and linearly extending $G_{\lambda} \mapsto G_{\lambda^T}$; see for instance \cite{duality and deformations} or Corollary 6.7 of \cite{buch}. This map is a continuous ring endomorphism and an involution of $\hat{\Lambda}$.

We will begin by proving the following theorem.

\begin{theorem}\label{true for all e_k}
For $\rho = (n, n-1, \dots, 1)$ and all positive integers $k$, we have
    \[e_k^{\perp}g_{\rho} = \tau(e_k)^{\perp}g_{\rho}.\]
\end{theorem}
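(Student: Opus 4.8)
The plan is to expand $e_k$ in the basis of stable Grothendieck polynomials, observe that only single-column shapes $(1^j)$ occur, apply the involution $\tau$ term by term to pass to single-row shapes $(j)$, and then skew both resulting series against $g_\rho$ and invoke Lemma \ref{straight shape}.

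First I would establish the expansion
\[
e_k = \sum_{j \geq k} \binom{j-1}{k-1}\, G_{(1^j)}.
\]
To do this, compute the single-column Grothendieck polynomials directly from the definition: in a set-valued filling of the column $(1^j)$, strict increase down the column forces each positive integer to occur in at most one box, so a filling amounts to choosing a finite set $U$ of integers (weight $\prod_{i\in U} x_i$) and cutting its elements, in increasing order, into $j$ consecutive nonempty blocks, which can be done in $\binom{|U|-1}{j-1}$ ways, with sign $(-1)^{|U|-j}$. Summing and using $e_r = \sum_{|U|=r}\prod_{i\in U} x_i$ gives $G_{(1^j)} = \sum_{r\geq j}(-1)^{r-j}\binom{r-1}{j-1} e_r$. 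This transition is unitriangular with respect to degree, since the lowest-degree part of $G_{(1^j)}$ is $e_j = s_{(1^j)}$ (Remark \ref{s is lowest}), so it inverts in the completion $\hat\Lambda$; a short binomial-inversion computation yields the displayed formula. Since $\tau$ is continuous and linear with $\tau(G_{(1^j)}) = G_{((1^j))^T} = G_{(j)}$, applying it term by term gives
\[
\tau(e_k) = \sum_{j\geq k}\binom{j-1}{k-1}\, G_{(j)}.
\]

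Next I would skew both series against $g_\rho$. Because $\Delta(g_\rho) = \sum_{\nu\subseteq\rho} g_\nu\otimes g_{\rho/\nu}$ is a finite sum (Lemma \ref{skewing thingy of g lambda}) and $\langle G_\lambda, g_\nu\rangle = \delta_{\lambda\nu}$, applying $e_k^\perp$ and $\tau(e_k)^\perp$ selects exactly the terms with $(1^j)\subseteq\rho$, respectively $(j)\subseteq\rho$, i.e. $1\le j\le n$:
\[
e_k^\perp g_\rho = \sum_{j=1}^{n}\binom{j-1}{k-1}\, g_{\rho/(1^j)}, \qquad \tau(e_k)^\perp g_\rho = \sum_{j=1}^{n}\binom{j-1}{k-1}\, g_{\rho/(j)},
\]
using Theorem \ref{skewing little g} ($G_\mu^\perp g_\rho = g_{\rho/\mu}$) and the vanishing $g_{\rho/\mu}=0$ for $\mu\not\subseteq\rho$, so that the $j>n$ terms drop out automatically on both sides. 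Lemma \ref{straight shape} then gives $g_{\rho/(j)} = g_{\rho/(1^j)}$ for each $1\le j\le n$, so the two sums agree term by term and the theorem follows.

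The main obstacle is the first step: showing that the $G$-expansion of $e_k$ involves only single columns, with precisely these binomial coefficients. Everything afterward is formal, resting on the linearity and continuity of $\perp$ and $\tau$ and on the finiteness of $\Delta(g_\rho)$. One point to treat carefully is that $e_k\in\Lambda$ while $\tau(e_k)$ and the series $\sum_j\binom{j-1}{k-1}G_{(1^j)}$ live in $\hat\Lambda$; I would justify the term-by-term application of $\perp$ by pairing against the finitely many $g_\nu$ with $\nu\subseteq\rho$, which renders every sum that appears genuinely finite. As a sanity check, running the same computation with rows shows $\tau(e_k)=h_k$, consistent with $\tau$ restricting to $\omega$ on lowest-degree parts.
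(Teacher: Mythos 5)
Your proposal is correct and follows essentially the same route as the paper's proof: both expand $e_k = \sum_{j\geq k}\binom{j-1}{k-1}G_{(1^j)}$ (via the same count of set-valued column fillings and binomial inversion), apply $\tau$ term by term to obtain single-row shapes, justify distributing $\perp$ over the infinite sum, and conclude with Lemma \ref{straight shape} applied to each $g_{\rho/(1^j)} = g_{\rho/(j)}$. The only cosmetic difference is that you justify the term-by-term skewing by the finiteness of $\Delta(g_\rho)$ directly, whereas the paper isolates this as a separate lemma on $\perp$ distributing over infinite sums; the substance is identical.
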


In order to do so, we shall first prove the following lemmas and propositions.

\begin{lemma}\label{straight shape G}
The stable Grothendieck polynomial of shape $(1^k)$ can be written as 
\begin{align*}
G_{(1^k)} = \sum_{n \ge k}(-1)^{n-k}\binom{n-1}{k-1}e_n.
\end{align*}
\end{lemma}
\begin{proof}
The stable Grothendieck polynomial $G_{(1^k)}$ is a sum over set-valued tableaux $T$ of shape $(1^k)$. All set-valued tableaux are strictly increasing along the columns, so all entries are distinct in a set-valued tableau of shape $(1^k)$. Therefore, the monomial $x^T$ for each set-valued tableau $T$ is of the form $x_{i_1}x_{i_2}\cdots x_{i_n}$ for positive integers $i_1<\cdots < i_n,$ where $n = |T|$. Given $n$ distinct entries, the number of ways to fill in $T$ is equal to the number of compositions of $n$ into $k$ nonempty parts, which is $\binom{n-1}{k-1}.$

Therefore, 
\begin{align*}
G_{(1^k)} &= \sum_T (-1)^{|T|-|1^k|} x^T \\
&= \sum_n \sum_{|T| = n} (-1)^{n-k} x^T \\
&= \sum_{n \geq k} (-1)^{n-k} \binom{n-1}{k-1} \sum_{i_1 < \cdots < i_n} x_{i_1}\cdots x_{i_n} \\
&= \sum_{n \geq k} (-1)^{n-k} \binom{n-1}{k-1} e_n,
\end{align*}
as desired.
\end{proof}

\begin{proposition}\label{e as linear combo of G}
We can write the elementary symmetric function $e_k$ as an infinite sum of stable Grothendieck polynomials:
\begin{align*}
    e_k = \sum_{n \geq k} \dbinom{n-1}{k-1} G_{(1^n)}.
\end{align*}
\end{proposition}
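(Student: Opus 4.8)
The plan is to recognize Proposition \ref{e as linear combo of G} as the inversion of the triangular relation just established in Lemma \ref{straight shape G}. That lemma expresses each $G_{(1^k)}$ as a combination of the $e_n$ with $n \ge k$, with an upper-triangular coefficient matrix whose diagonal entries are all $1$; the proposition asserts the inverse expansion, which is likewise triangular. Since both directions are triangular with unit diagonal, the inverse coefficients are uniquely determined, so it suffices to substitute one expansion into the other and check that everything collapses to the identity. (One could equivalently package the same computation as a generating-function identity, but the direct substitution keeps the bookkeeping transparent.)

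Concretely, I would start from the proposed right-hand side, substitute Lemma \ref{straight shape G} for each $G_{(1^n)}$ (renaming the inner elementary index to $m$), and collect the coefficient of $e_m$:
\begin{align*}
\sum_{n \ge k}\binom{n-1}{k-1}G_{(1^n)}
&= \sum_{n\ge k}\binom{n-1}{k-1}\sum_{m\ge n}(-1)^{m-n}\binom{m-1}{n-1}e_m \\
&= \sum_{m\ge k}\left(\sum_{n=k}^{m}(-1)^{m-n}\binom{m-1}{n-1}\binom{n-1}{k-1}\right)e_m.
\end{align*}
The inner sum is then evaluated using the subset-of-a-subset identity $\binom{m-1}{n-1}\binom{n-1}{k-1} = \binom{m-1}{k-1}\binom{m-k}{n-k}$, which pulls out the factor $\binom{m-1}{k-1}$ and, after reindexing with $j = n-k$, leaves $\binom{m-1}{k-1}\sum_{j=0}^{m-k}(-1)^{m-k-j}\binom{m-k}{j} = \binom{m-1}{k-1}(1-1)^{m-k}$. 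This vanishes unless $m = k$, in which case it equals $1$. Hence the coefficient of $e_m$ is $\delta_{km}$, and the entire sum equals $e_k$, which is exactly the claimed identity.

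The one point that genuinely needs care is the manipulation of the double infinite sum inside $\hat{\Lambda}$: a priori one must justify interchanging the two summations before collecting coefficients of $e_m$. This is legitimate precisely because of the triangular structure — for each fixed $m$, only the indices $n$ with $k \le n \le m$ contribute to $e_m$, so the coefficient of each $e_m$ is a genuinely finite sum and no convergence issue arises. I expect the alternating binomial evaluation to be the only real computational step, with the reindexing routine; the thing to state explicitly is the validity of the rearrangement, which the triangularity guarantees.
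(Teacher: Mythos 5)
Your proposal is correct and follows essentially the same route as the paper's own proof: substitute Lemma \ref{straight shape G} into the claimed expansion, collect the coefficient of each $e_m$, apply trinomial revision (your ``subset-of-a-subset'' identity), and evaluate the resulting alternating binomial sum as $(1-1)^{m-k} = \delta_{km}$. Your explicit remark that the rearrangement of the double sum is justified by triangularity (each $e_m$ receives contributions from only finitely many $n$) is a point the paper leaves implicit, and is a welcome addition.
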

\begin{proof}
By Lemma \ref{straight shape G},
\begin{align}\label{uses trinomial revision}
    \sum_{n \geq k} \dbinom{n-1}{k-1} G_{(1^n)} &= \sum_{k \leq n} \sum_{n \leq j} (-1)^{j-n} \dbinom{n-1}{k-1} \dbinom{j-1}{n-1} e_j. 
\end{align}

For a given $j \ge k,$ the coefficient of $e_j$ on the right hand side of (\ref{uses trinomial revision}) is
\begin{align*}
     \sum_{k \le n \le j} (-1)^{j-n} \dbinom{n-1}{k-1} \dbinom{j-1}{n-1} &=\sum_{k \le n \le j} (-1)^{j-n} \dbinom{j-1}{k-1} \dbinom{j-k}{n-k} \\
     &= \dbinom{j-1}{k-1} \delta_{kj} = \delta_{kj},
\end{align*}
where the first simplification comes from trinomial revision and the second comes from the fact that the alternating sum of a row of binomial coefficients (besides the first row) is $0$. This means that the coefficient of $e_j$ in (\ref{uses trinomial revision}) is 0 for all $j \ne k$ and 1 for $j = k$, so 
\begin{align*}
    e_k = \sum_{n \geq k} \dbinom{n-1}{k-1} G_{(1^n)}.
\end{align*}

\vspace*{-2\baselineskip}
\end{proof}

\begin{lemma}\label{perp distributes over infinite sums}
The skewing operator $\perp$ distributes over well-defined infinite sums; that is, for any $a\in \Lambda$ and $f_1, f_2, \cdots \in \hat{\Lambda}$,
\begin{align*}
    \Big(\sum_{i\ge 1}f_i\Big)^{\perp}(a) = \sum_{i \ge 1}f_i^{\perp}(a).
\end{align*}
\end{lemma}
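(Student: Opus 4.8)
The plan is to unwind the definition of the skewing operator and reduce the claim to linearity of the extended Hall inner product over well-defined infinite sums in its first argument. The crucial structural observation is that $a\in\Lambda$ has bounded degree, so its coproduct $\Delta(a)=\sum_{j=1}^{r}b_j\otimes c_j$ is a \emph{finite} sum with each $b_j,c_j\in\Lambda$ again of bounded degree. This finiteness is exactly what makes the remaining manipulations legitimate, since the genuinely infinite sum appears only in the first slot of the pairing, paired against the finitely many bounded-degree elements $b_j$.

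First I would write, directly from Definition \ref{skewing},
\[\Big(\sum_{i\ge 1}f_i\Big)^{\perp}(a)=\sum_{j=1}^{r}\Big\langle \sum_{i\ge 1}f_i,\,b_j\Big\rangle\,c_j.\]
The heart of the argument is then to establish, for each fixed $j$, the identity
\[\Big\langle \sum_{i\ge 1}f_i,\,b_j\Big\rangle=\sum_{i\ge 1}\langle f_i,b_j\rangle.\]
To see this, expand $b_j=\sum_{\mu}\beta_{\mu}s_{\mu}$, a finite sum since $b_j\in\Lambda$, and expand each $f_i=\sum_{\lambda}a_{i,\lambda}s_{\lambda}$ in the Schur basis of $\hat{\Lambda}$. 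Because $\sum_{i\ge 1}f_i$ is a well-defined element of $\hat{\Lambda}$, for each partition $\mu$ the coefficient $\sum_{i\ge 1}a_{i,\mu}$ of $s_{\mu}$ is a well-defined rational number. Using $\langle s_{\lambda},s_{\mu}\rangle=\delta_{\lambda\mu}$, both sides of the displayed identity evaluate to $\sum_{\mu}\beta_{\mu}\big(\sum_{i\ge 1}a_{i,\mu}\big)$, a finite sum over the finitely many $\mu$ with $\beta_{\mu}\neq 0$. Substituting back and interchanging the now-justified infinite $i$-sum with the finite $j$-sum yields
\[\Big(\sum_{i\ge 1}f_i\Big)^{\perp}(a)=\sum_{i\ge 1}\sum_{j=1}^{r}\langle f_i,b_j\rangle\,c_j=\sum_{i\ge 1}f_i^{\perp}(a),\]
as desired.

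The main obstacle is making precise the middle display, namely the interchange of the infinite sum with the Hall pairing. Everything hinges on the fact that each $b_j$ lies in $\Lambda$ rather than merely $\hat{\Lambda}$, and hence has bounded degree, so that the pairing pins down only finitely many Schur components; combined with the well-definedness of $\sum_{i\ge 1}f_i$, this collapses each pairing to a finite computation and licenses reordering the sums. Care should be taken to spell out what ``well-defined infinite sum'' means here, so that the coefficients $\sum_{i\ge 1}a_{i,\mu}$ are guaranteed to exist, after which the rest of the argument is bookkeeping.
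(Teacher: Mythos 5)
Your proposal is correct and follows essentially the same route as the paper: unwind Definition \ref{skewing} using the finiteness of $\Delta(a)$ for $a\in\Lambda$, pull the infinite sum out of the Hall pairing in its first slot, and then interchange it with the finite coproduct sum. The only difference is that where the paper simply asserts the extended pairing is bilinear and continuous in its first argument, you justify that step explicitly by expanding in the Schur basis, which is a welcome but not structurally different elaboration.
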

\begin{proof}
Let $\Delta(a) = \sum_{(a)} a_{(1)}\otimes a_{(2)}$. Then 
\begin{align*}
    \Big(\sum_{i\ge 1}f_i\Big)^{\perp}(a) = \sum_{(a)} \Big\langle\sum_{i\ge 1}  f_i, a_{(1)} \Big\rangle a_{(2)}.
\end{align*}
Since the Hall inner product on $\hat{\Lambda} \times \Lambda \mapsto \mathbb{Z}$ is bilinear and continuous in its first argument, we expand to get
\begin{align*}
\Big(\sum_{i\ge 1}f_i\Big)^{\perp}(a) &= \sum_{(a)} \sum_{i\ge 1}\langle f_i, a_{(1)} \rangle a_{(2)}  \\
&= \sum_{i \ge 1} \sum_{(a)}\langle f_i, a_{(1)} \rangle a_{(2)} \\
&= \sum_{i \ge 1}f_i^{\perp}(a).
\end{align*}

\vspace*{-2\baselineskip}
\end{proof}

Finally, we can give the proof of Theorem \ref{true for all e_k}.

\begin{proof}
 Observe that $\tau$ is a continuous ring endomorphism of $\hat{\Lambda}$, as is the skewing operator by Lemma \ref{perp distributes over infinite sums}. Applying Proposition \ref{e as linear combo of G} to decompose $e_k$ as a linear combination of $G_{(1^n)}$, and applying Lemma \ref{straight shape} 
to write $G_{(1^n)}^{\perp}(g_{\rho}) = G_{(n)}^{\perp}(g_{\rho})$ gives:
\begin{align*}
    \tau(e_k)^{\perp}(g_{\rho}) &= \tau \Big( \sum_{n \ge k} \dbinom{n-1}{k-1} G_{(1^n)}\Big)^{\perp}(g_{\rho})\\
    &= \sum_{n \ge k} \dbinom{n-1}{k-1} \tau(G_{(1^n)})^{\perp}(g_{\rho}) \\
    &= \sum_{n \ge k} \dbinom{n-1}{k-1} G_{(n)}^{\perp}(g_{\rho}) \\
    &= \sum_{n \ge k} \dbinom{n-1}{k-1} G_{(1^n)}^{\perp}(g_{\rho}) \\
    &=  \Big( \sum_{n \ge k} \dbinom{n-1}{k-1} G_{(1^n)}\Big)^{\perp}(g_{\rho}) \\
    &= e_k^{\perp}g_{\rho}.
    \end{align*}
    
    \vspace*{-2\baselineskip}
    \end{proof}
    
Now, in order to prove Theorem \ref{main result small g}, we need the following lemmas, which are inspired by Exercises 2.9.24 and 2.9.25 in \cite{hopf}.

\begin{lemma}\label{subalgebra for arbitrary homomorphism}
Let $\psi$ be an arbitrary continuous ring endomorphism of $\hat{\Lambda}$. Then for any given $a \in \Lambda$, the set $A = \{f \in \hat{\Lambda} : f^{\perp}(a) = \psi(f)^{\perp} (a)\}$ is closed under finite multiplication and (possibly infinite) addition of its elements.
\end{lemma}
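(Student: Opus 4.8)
The plan is to reformulate membership in $A$ in terms of the extended Hall pairing, which linearizes the skewing operator and makes both closure properties transparent. Using Lemma \ref{skewingProperties1}, I would first observe that for $f \in \hat{\Lambda}$ the two elements $f^{\perp}(a)$ and $\psi(f)^{\perp}(a)$ lie in $\Lambda$, so they are equal if and only if $\langle h, f^{\perp}(a)\rangle = \langle h, \psi(f)^{\perp}(a)\rangle$ for every $h \in \hat{\Lambda}$ (the pairing $\hat{\Lambda}\times\Lambda\to\mathbb{Q}$ being nondegenerate in its second argument). Applying Lemma \ref{skewingProperties1} to each side rewrites this as
\[
f \in A \quad\Longleftrightarrow\quad \langle fh, a\rangle = \langle \psi(f)h, a\rangle \ \text{ for all } h \in \hat{\Lambda}.
\]
This characterization is what I would work with throughout, since it converts a statement about the operator $f^{\perp}$ acting on the single fixed element $a$ into a family of scalar identities indexed by a free test function $h$.

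For closure under multiplication it suffices, by induction, to treat a product of two elements $f, g \in A$. I want $\langle (fg)h, a\rangle = \langle \psi(fg)h, a\rangle$ for all $h \in \hat{\Lambda}$; since $\psi$ is a ring endomorphism, $\psi(fg) = \psi(f)\psi(g)$, so I would establish the chain
\begin{align*}
\langle f(gh), a\rangle
&= \langle \psi(f)(gh), a\rangle = \langle g\,(\psi(f)h), a\rangle \\
&= \langle \psi(g)\,(\psi(f)h), a\rangle = \langle \psi(f)\psi(g)h, a\rangle,
\end{align*}
where the first equality invokes the characterization of $f \in A$ with the test function $gh \in \hat{\Lambda}$, the second and fourth use commutativity of $\hat{\Lambda}$, and the third invokes the characterization of $g \in A$ with the test function $\psi(f)h \in \hat{\Lambda}$. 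This is the crux of the argument and the step I expect to be the main obstacle: because $A$ is defined relative to a single fixed $a$, one cannot naively feed $a$ into both hypotheses at once. The resolution is precisely to apply each membership condition with a different, cleverly chosen test function, which is only possible because the adjoint reformulation frees up the variable $h$.

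For closure under addition, suppose $f_1, f_2, \dots \in A$ and $\sum_{i\ge 1} f_i$ converges in $\hat{\Lambda}$. Since $\psi$ is continuous and linear, $\psi\bigl(\sum_i f_i\bigr) = \sum_i \psi(f_i)$ is also well-defined, so by Lemma \ref{perp distributes over infinite sums} (applied in both directions) together with $f_i \in A$ I would compute
\[
\Big(\sum_{i\ge 1} f_i\Big)^{\perp}(a) = \sum_{i\ge 1} f_i^{\perp}(a) = \sum_{i\ge 1} \psi(f_i)^{\perp}(a) = \Big(\sum_{i\ge 1}\psi(f_i)\Big)^{\perp}(a) = \psi\Big(\sum_{i\ge 1} f_i\Big)^{\perp}(a),
\]
whence $\sum_i f_i \in A$. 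I expect this step to be essentially immediate once Lemma \ref{perp distributes over infinite sums} and the continuity of $\psi$ are in hand, so the entire weight of the proof rests on the multiplicative closure handled above.
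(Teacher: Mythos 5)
Your proof is correct and follows essentially the same route as the paper's: the infinite-addition step is the identical computation via Lemma \ref{perp distributes over infinite sums} and continuity of $\psi$, and your multiplicative step is the paper's chain of Hall-pairing manipulations, merely repackaged by stating the adjoint characterization $f \in A \iff \langle fh, a\rangle = \langle \psi(f)h, a\rangle$ for all test functions $h$ up front instead of invoking Lemma \ref{skewingProperties1} inline at each step. In particular, your key move---applying the hypothesis on $f$ with test function $gh$ and the hypothesis on $g$ with test function $\psi(f)h$---is exactly what the paper does with its test element $k$.
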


\begin{proof}
First, we show that if $f_1, f_2,\cdots\in A$, then $\sum_{i \ge 1} f_i\in A$. By considering Lemma \ref{perp distributes over infinite sums} and using the fact that $\psi$ is a continuous ring endomorphism of $\hat{\Lambda}$, we have
\begin{align*}
    \Big( \psi \Big(\sum_{i \ge 1} f_i\Big) \Big)^{\perp} (a) &= \Big(\sum_{i \ge 1}\psi(f_i)\Big)^{\perp} (a)\\
    &=\sum_{i \ge 1}\psi(f_i)^{\perp}(a)\\
    &= \sum_{i \ge 1}f_i^{\perp} (a)\\
    &= \Big(\sum_{i \ge 1}f_i\Big)^{\perp}(a),
\end{align*}
so indeed, $\sum_{i \ge 1} f_i \in A$.

Next, we show that if $f_1, f_2 \in A,$ then $f_1f_2 \in A.$ 
First, notice that if $\langle k, f_1 \rangle = \langle k, f_2 \rangle$ for all $k \in \Lambda$, then $f_1 = f_2$. This is because each $f_1$ and $f_2$ can be written uniquely as a (possibly infinite) linear combination of Schur polynomials $s_{\lambda}$, and taking $k$ to be $s_{\lambda}$ for all $\lambda$ in turn, gives us that $f_1$ and $f_2$ have the same coefficient for all  $s_{\lambda}$. Thus, it suffices to show that $\langle k, (f_1f_2)^{\perp}(a) \rangle = \langle k, (\psi(f_1f_2))^{\perp}(a) \rangle$ for all $k \in \Lambda$.

Repeatedly applying the property from Lemma \ref{skewingProperties1}, and using the fact that $f_1, f_2 \in A$, we can do the following manipulation:
\begin{align*}
\langle k, (f_1f_2)^{\perp}(a)\rangle &= \langle f_1f_2k, a \rangle \\
&= \langle f_2k, f_1^{\perp}(a) \rangle \\
&= \langle f_2k, {\psi}(f_1)^{\perp}(a) \rangle \\
&= \langle {\psi}(f_1)f_2k, a \rangle \\
&= \langle f_2{\psi}(f_1)k, a \rangle \\
&= \langle {\psi}(f_1)k, f_2^{\perp}(a) \rangle \\
&= \langle {\psi}(f_1)k, {\psi}(f_2)^{\perp}(a) \rangle \\
&= \langle {\psi}(f_2){\psi}(f_1)k, a \rangle \\
&= \langle {\psi}(f_1f_2)k, a \rangle \\
&= \langle k, {\psi}(f_1f_2)^{\perp}(a) \rangle.
\end{align*}

This means that $(f_1f_2)^{\perp}(a) = {\psi}(f_1f_2)^{\perp}(a),$ so indeed $f_1f_2 \in A$.
Thus $A$ is closed under finite multiplication and (possibly infinite) addition.
\end{proof}

\begin{corollary}\label{set is algebra for tau}
The set $A = \{f \in \hat{\Lambda} : f^{\perp}(g_{\rho}) = \tau(f)^{\perp} (g_{\rho})\}$, where $\tau$ is the ring homomorphism defined earlier by linearly extending $G_{\lambda} \mapsto G_{\lambda^T},$ is closed under finite multiplication and infinite addition.
\end{corollary}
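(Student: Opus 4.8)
The plan is to obtain this corollary as an immediate specialization of Lemma \ref{subalgebra for arbitrary homomorphism}. That lemma asserts, for an \emph{arbitrary} continuous ring endomorphism $\psi$ of $\hat{\Lambda}$ and an \emph{arbitrary} element $a \in \Lambda$, that the set $\{f \in \hat{\Lambda} : f^{\perp}(a) = \psi(f)^{\perp}(a)\}$ is closed under finite products and under (possibly infinite) sums. The present statement is exactly this set in the case $\psi = \tau$ and $a = g_{\rho}$, so the entire task reduces to verifying that these two choices satisfy the hypotheses of the lemma.

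First I would check the hypothesis on $\psi$. The map $\tau$ was introduced above by continuously and linearly extending $G_{\lambda} \mapsto G_{\lambda^T}$, and it was recorded there that $\tau$ is a continuous ring endomorphism (indeed an involution) of $\hat{\Lambda}$. Thus $\psi = \tau$ is an admissible choice, and no further verification of the endomorphism property is needed.

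The one point that genuinely warrants a word, and hence the only candidate for an obstacle, is the hypothesis on $a$: Lemma \ref{subalgebra for arbitrary homomorphism} requires $a \in \Lambda$, not merely $a \in \hat{\Lambda}$. This restriction is present because its proof invokes the comultiplication $\Delta$ and the extended pairing $\langle \cdot, \cdot \rangle : \hat{\Lambda} \times \Lambda \to \mathbb{Q}$, both of which require a genuine finite-degree symmetric function in the relevant slot. Since $\rho = (n, n-1, \dots, 1)$ is an honest partition, its dual stable Grothendieck polynomial $g_{\rho}$ is a symmetric function of finite degree $|\rho|$ and therefore lies in $\Lambda$. With both hypotheses confirmed, the lemma applies verbatim with $\psi = \tau$ and $a = g_{\rho}$, yielding directly that $A = \{f \in \hat{\Lambda} : f^{\perp}(g_{\rho}) = \tau(f)^{\perp}(g_{\rho})\}$ is closed under finite multiplication and infinite addition, as claimed.
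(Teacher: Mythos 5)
Your proposal is correct and matches the paper's own (implicit) argument exactly: the paper states this as an immediate corollary of Lemma \ref{subalgebra for arbitrary homomorphism}, specialized to $\psi = \tau$ and $a = g_{\rho}$, relying on the facts that $\tau$ is a continuous ring endomorphism of $\hat{\Lambda}$ and that $g_{\rho} \in \Lambda$. Your explicit verification of these two hypotheses, in particular noting that $g_{\rho}$ is a genuine finite-degree element of $\Lambda$ rather than of $\hat{\Lambda}$, is exactly the right check and nothing more is needed.
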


Now we are ready to prove Theorem \ref{main result small g}.

\begin{proof}
By Theorem \ref{true for all e_k}, we have $e_k^{\perp}g_{\rho} = \tau(e_k)^{\perp}g_{\rho}$, so this means $e_k \in A = \{f \in \hat{\Lambda} : f^{\perp}(g_{\rho}) = \tau(f)^{\perp} (g_{\rho})\}$. By Corollary \ref{set is algebra for tau}, $A$ is closed under multiplication and possibly infinite sums, so for all $\lambda$ we have $e_{\lambda} = e_{\lambda_1}e_{\lambda_2} \cdots e_{\lambda_n} \in A$. Since the $e_{\lambda}$ form a basis for $\hat{\Lambda}$ (if we allow infinite linear combinations), any symmetric function $f = \sum_{\lambda}a_{\lambda}e_{\lambda}$ is in $A$. In particular, $G_{\mu} \in A$, so
\begin{align*}
    g_{\rho/\mu} &= G_{\mu}^{\perp}g_{\rho} \\
        &= \tau (G_{\mu})^{\perp}g_{\rho} \\
        &= G_{\mu^T}^{\perp} g_{\rho} \\
        &= g_{\rho/\mu^T}.
\end{align*}

\vspace*{-2\baselineskip}
\end{proof}

\section{Proof for Stable Grothendieck Polynomials}
In this section we prove Theorem \ref{main result big g}, by first proving with the special case when $\mu = (k)$ or $(1^k)$ combinatorially, and then generalizing to arbitrary $\mu$ using the Hopf algebraic structure of the symmetric functions.

\subsection{Proof for $\mu = (k)$ or $(1^k)$}
Throughout this section, we denote the partition $(n, n-1, \dots, 1)$ by $\rho_n$. The following theorem by Buch (\cite{buch}, Theorem 6.9) allows us to prove the special case when $\mu = (k)$ or $(1^k)$ combinatorially.

\begin{theorem}[Buch]\label{big G buch}
For a skew partition $\lambda/\mu$,
\[G_{\lambda / \mu} = \sum_{\nu} (-1)^{|\nu| - |\lambda/\mu|}\alpha_{\lambda/\mu, \nu} G_{\nu},\]
where the coefficient $\alpha_{\lambda/\mu, \nu}$ is the number of set-valued tableaux $T$ of shape $\lambda/\mu$ such that $w(T)$ is a lattice word with content $\nu$.
\end{theorem}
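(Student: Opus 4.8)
The plan is to reduce the statement to the computation of a single Hopf-algebraic structure constant, and then to identify that constant with the asserted tableau count. First I would record the set-valued analogue of Lemma~\ref{skewing thingy of g lambda}: splitting the alphabet $x_1<x_2<\cdots<y_1<y_2<\cdots$ in the defining sum over set-valued tableaux of shape $\lambda$ gives $\Delta(G_\lambda)=\sum_{\mu\subseteq\lambda}G_\mu\otimes G_{\lambda/\mu}$, by exactly the argument used for $g$. By the definition of the skewing operator this yields $g_\mu^{\perp}G_\lambda=G_{\lambda/\mu}$, the $G$-analogue of Theorem~\ref{skewing little g}. Since $G_{\lambda/\mu}$ is symmetric it expands uniquely in the $G$-basis, and the coefficient of $G_\nu$ equals $\langle g_\nu, G_{\lambda/\mu}\rangle=\langle g_\nu, g_\mu^{\perp}G_\lambda\rangle$, which by Lemma~\ref{skewingProperties2} (taking $f=g_\mu$, $g=g_\nu$, $a=G_\lambda$) equals
\[
\langle g_\mu g_\nu,\,G_\lambda\rangle .
\]
Thus it suffices to prove $\langle g_\mu g_\nu, G_\lambda\rangle=(-1)^{|\nu|-|\lambda/\mu|}\alpha_{\lambda/\mu,\nu}$; equivalently, the coefficient of $g_\lambda$ in the product $g_\mu g_\nu$ is the signed lattice-word count on the skew shape $\lambda/\mu$.

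The combinatorial heart is this last identity, and I would attack it with the same $K$-theoretic insertion machinery Buch uses for the product rule of Theorem~\ref{buch}. Concretely, I would extend the reverse reading word and the lattice-word condition to set-valued tableaux of a general skew shape $\lambda/\mu$, and set up a Hecke ($K$-theoretic RSK) correspondence on these reading words. The goal is a content-preserving bijection that sorts each set-valued tableau of shape $\lambda/\mu$ according to the straight shape of its insertion tableau, under which the fibre over the shape $\nu$ is indexed precisely by the tableaux whose reading word is a lattice word of content $\nu$; the unique highest-weight (Yamanouchi) element of each fibre then accounts for one copy of $G_\nu$. Equivalently, one could phrase this as a $K$-jeu-de-taquin rectification argument, or organize it through a crystal structure on set-valued tableaux whose connected components carry characters $G_\nu$ and whose highest-weight elements are exactly the lattice-word fillings.

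I expect the main obstacle to be precisely the point where the $K$-theory departs from the classical Schur case: Hecke insertion and $K$-jeu-de-taquin are neither length-preserving nor confluent in the naive sense, so proving that the insertion (or rectification) is well defined and bijective — while simultaneously tracking the grading that produces the sign $(-1)^{|\nu|-|\lambda/\mu|}$, which records the degree drop from the top degree $|\mu|+|\nu|$ of $g_\mu g_\nu$ down to $|\lambda|$ — is the delicate step. I would also emphasize that there is no formal shortcut through Theorem~\ref{buch} itself: under the duality $\langle G_\lambda, g_\mu\rangle=\delta_{\lambda\mu}$, the constants appearing here are the \emph{product} structure constants of the $g$-basis, which are dual to the coproduct of the $G_\lambda$ and form a genuinely different combinatorial rule from the product rule for the $G_\lambda$ in Theorem~\ref{buch}. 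Hence the skew rule has to be established by its own combinatorial argument rather than deduced formally from the straight-shape product rule.
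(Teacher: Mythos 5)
Your opening reduction contains a genuine error, and it occurs exactly at the point where set-valued tableaux differ from reverse plane partitions. The alphabet-splitting argument of Lemma~\ref{skewing thingy of g lambda} does \emph{not} carry over verbatim to $G_\lambda$: in a reverse plane partition every box holds a single letter, so each box lies in exactly one of the two alphabets, but in a set-valued tableau a single box may contain letters from both alphabets $x$ and $y$. These mixed boxes (they form a rook strip) are precisely why Buch introduces the corrected functions $G_{\lambda//\mu}=\sum_{\sigma}(-1)^{|\mu/\sigma|}G_{\lambda/\sigma}$, summed over rook strips $\mu/\sigma$. The true statements, which the paper itself quotes from Buch in Section 4.2, are $\Delta(G_{\lambda})=\sum_{\nu\subseteq\lambda}G_{\nu}\otimes G_{\lambda//\nu}$ and $g_{\mu}^{\perp}(G_{\lambda})=G_{\lambda//\mu}$, together with $G_{\lambda/\mu}=\sum_{\sigma\subseteq\mu}G_{\lambda//\sigma}$; and $G_{\lambda//\mu}\neq G_{\lambda/\mu}$ in general. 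Already for $\lambda=(1)$ one has $\Delta(G_{(1)})=G_{(1)}\otimes 1+1\otimes G_{(1)}-G_{(1)}\otimes G_{(1)}$, not $G_{(1)}\otimes 1+1\otimes G_{(1)}$. Consequently the identity you reduce the theorem to, namely $\langle g_\mu g_\nu, G_\lambda\rangle=(-1)^{|\nu|-|\lambda/\mu|}\alpha_{\lambda/\mu,\nu}$, is false: the pairing $\langle g_\mu g_\nu,G_\lambda\rangle$ is the coefficient of $G_\nu$ in $G_{\lambda//\mu}$, not in $G_{\lambda/\mu}$. Concretely, for $\lambda=\mu=\nu=(1)$ we have $G_{(1)/(1)}=1$ and $\alpha_{(1)/(1),(1)}=0$, yet $g_{(1)}g_{(1)}=g_{(2)}+g_{(1,1)}-g_{(1)}$, so $\langle g_{(1)}g_{(1)},G_{(1)}\rangle=-1$ (equivalently, $G_{(1)//(1)}=1-G_{(1)}$). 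Your own closing remark --- that these pairings are the product structure constants of the $g$-basis, dual to the coproduct of the $G$'s --- is correct, but it undercuts rather than supports the reduction: those dual constants govern $G_{\lambda//\mu}$, and the rook-strip bookkeeping relating $//$ to $/$ cannot be skipped.

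Beyond this, the part of the argument that would carry the real content --- the Hecke-insertion / $K$-jeu-de-taquin / crystal argument --- is only named, not carried out; you yourself flag well-definedness and bijectivity of the insertion as the delicate steps, so none of the combinatorial identity is actually established. For comparison, the paper does not prove this statement at all: it is imported as Buch's Theorem~6.9 and used as a black box, so the intended justification is a citation (or a faithful reproduction of Buch's Sections 5--7, where the $//$-corrections are built into the theory from the start rather than patched in afterwards).
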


Now, we have the following recurrence for the $\alpha_{\lambda/\mu,\nu}$ coefficients when $\lambda = \rho_n$ and $\mu = (k)$ for some positive integer $k$. 
\begin{lemma}
Fix a partition $\nu = (\nu_1, \nu_2, \dots, \nu_m)$, and let $\nu^{-} = (\nu_2, \dots, \nu_m)$. For  given positive integers $k,n$ with $k<n$, we have
\[\alpha_{\rho_n / (k), \nu} = \alpha_{\rho_{n-1} / (k), \nu^{-}} + 2\alpha_{\rho_{n-1} / (k-1), \nu^{-}}.\]
\end{lemma}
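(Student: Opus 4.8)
The plan is to prove the recurrence combinatorially, reading the coefficients off the tableau interpretation furnished by Theorem \ref{big G buch}: $\alpha_{\rho_n/(k),\nu}$ is the number of set-valued tableaux $T$ of shape $\rho_n/(k)$ whose reverse reading word $w(T)$ is a lattice word of content $\nu$. I would build a content-tracking correspondence that strips the value $1$ out of such a tableau and lands in set-valued tableaux on the smaller staircase $\rho_{n-1}$ with content $\nu^- = (\nu_2,\ldots,\nu_m)$, splitting the image according to the two skew shapes $\rho_{n-1}/(k)$ and $\rho_{n-1}/(k-1)$.

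First I would pin down the first row. Since the top row of $\rho_n/(k)$ occupies columns $k+1,\ldots,n$, its rightmost cell is the very first box read in $w(T)$, and because a lattice word begins with $1$ this cell must be the singleton $\{1\}$; weak increase along the row then forces every cell of the first row to equal $\{1\}$, by an argument parallel to Lemma \ref{contents of nu}. Deleting all entries equal to $1$ and decrementing every remaining entry by $1$ therefore empties the first row and carries the rest of $T$ to a filling of the subdiagram formed by rows $2,\ldots,n$, which is exactly $\rho_{n-1}$. I would then check that this deletion-and-shift sends lattice words of content $\nu$ to lattice words of content $\nu^-$: removing the letters equal to $1$ and lowering every remaining letter preserves the ballot condition, since at every prefix the inequality $\#(a)\ge\#(a+1)$ for $a\ge 2$ becomes the required inequality $\#(a-1)\ge\#(a)$ for the shifted word. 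Hence the image is a genuine contributor to $\alpha_{\rho_{n-1}/\ast,\,\nu^-}$ for an appropriate skew shape.

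The heart of the argument—and the step I expect to be the real obstacle—is identifying the skew shape of the image and the multiplicity with which each image tableau is hit. The value $1$ can occur in the second row only as a leftmost run of singleton boxes $\{1\}$ together with one possibly set-valued corner box near column $k$; whether this corner box is a bare $\{1\}$ that vanishes under the deletion or a larger set that survives (with its $1$ removed) is precisely what decides whether the stripped tableau has its new top row beginning at column $k+1$ (shape $\rho_{n-1}/(k)$) or at column $k$ (shape $\rho_{n-1}/(k-1)$). Carrying out this case analysis and, crucially, inverting the map—reinstating the full top row of $1$'s, the deleted prefix of singletons, and the corner box—is where the coefficients $1$ and $2$ must be extracted: I would aim to show that each tableau counted by $\alpha_{\rho_{n-1}/(k),\nu^-}$ has a single preimage, while each tableau counted by $\alpha_{\rho_{n-1}/(k-1),\nu^-}$ is reached from two preimages corresponding to the two admissible states of the corner box. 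The delicate points are verifying that the lattice condition is respected in both directions of the reconstruction and that no skew shape other than $\rho_{n-1}/(k)$ and $\rho_{n-1}/(k-1)$ can arise; the forced all-$1$ first row and the prefix-of-$1$'s structure of the second row are exactly the rigidity that should make this multiplicity count tractable.
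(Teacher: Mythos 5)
Your plan reproduces the paper's own argument: force the first row to consist of $\{1\}$'s, strip the $1$'s and decrement every remaining entry (the paper's map $f$), and classify tableaux by the state of the corner box $B$ in column $k$ of the second row, namely $\{1\}$, $\{2\}$, or $\{1,2\}$ (the paper runs these as three forward bijections; your fiber count of size $1$ over shape $\rho_{n-1}/(k)$ and size $2$ over shape $\rho_{n-1}/(k-1)$ is the same bookkeeping). However, the multiplicity step you flag as the heart of the argument contains a genuine gap, one that the paper's own proof shares: the two preimages you attach to a tableau of shape $\rho_{n-1}/(k-1)$ do \emph{not} have the same content. The preimage with corner box $\{2\}$ contains $(n-k)+(k-1)=n-1$ ones, while the preimage with corner box $\{1,2\}$ contains $n$ ones (and no $1$ can occur below the second row, by column strictness). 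Since $\alpha_{\rho_n/(k),\nu}$ counts fillings whose word has content exactly $\nu$, the number of $1$'s is pinned to $\nu_1$, so for any fixed $\nu$ at most one of your two preimages is actually counted; the coefficient $2$ never materializes content-by-content.

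Indeed, the recurrence as stated is false, so no proof of it can be completed. Take $n=2$, $k=1$, $\nu=(1,1)$: the only valid filling of $\rho_2/(1)$ with content $(1,1)$ puts $\{1\}$ in the first row and $\{2\}$ in the second, so the left side equals $1$, whereas the right side is $\alpha_{\rho_1/(1),(1)}+2\alpha_{\rho_1,(1)}=0+2=2$. What your case analysis (and the paper's) actually proves is the content-refined version
\[
\alpha_{\rho_n/(k),\nu}\;=\;\begin{cases}\alpha_{\rho_{n-1}/(k),\nu^{-}}+\alpha_{\rho_{n-1}/(k-1),\nu^{-}}, & \nu_1=n,\\[2pt] \alpha_{\rho_{n-1}/(k-1),\nu^{-}}, & \nu_1=n-1,\\[2pt] 0, & \text{otherwise},\end{cases}
\]
where the first line collects the corner-box states $\{1\}$ and $\{1,2\}$ and the second is the state $\{2\}$. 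This repair is harmless to the paper's program: the identical trichotomy for $\rho_n/(1^k)$ (where $B$ contains $\{1\}$, contains no $1$, or contains a $1$ in a larger set) yields the same case-split on $\nu_1$, so Lemma \ref{alpha equality} and everything downstream still follow by the same induction. But to give a correct proof you must track $\nu_1$ through the stripping map rather than assert a uniform fiber size of $2$.
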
 

\begin{proof} 
Consider a set-valued tableau $T$ of shape $\rho_n/(k)$ such that $w(T)$ is a lattice word with content $\nu$. 
The rightmost box in the first row of $T$ must contain the set $\{1\}$, so all boxes in the first row contain the set $\{1\}$. The rightmost box in the second row must contain the set $\{2\}$, and similarly the rightmost $n-k-1$ boxes in the second row must all contain the set $\{2\}$. 

\begin{center}
    \ytableausetup{notabloids}
    \begin{ytableau}
    \none & \none & \none & 1 & 1 & 1\\
     &  & B & 2 & 2\\
     &  &  & \\
     &  & \\
     & \\
     
\end{ytableau}
\end{center}

Since rows are weakly increasing, there are three cases for the contents of the $k$th box $B$ in the second row: $\{1\}$, $\{2\}$, or $\{1, 2\}$. 

Let the function $f$ map the tableau $T$ to the tableau $T^{-}$ by deleting all 1's from the boxes of $T$, deleting all now-empty boxes, and subtracting 1 from all remaining numbers. Assuming that $w(T)$ is a lattice word, $w(T^{-})$ is also a lattice word: the $i$th $a+1$ coming before the $i$th $a+2$ in $w(T)$ corresponds to the $i$th $a$, which comes before the $i$th $a+1$ in $w(T^{-})$.

\bigskip

\noindent\textbf{Case 1: $B$ contains $\{1\}$}

Then, the leftmost $k$ boxes in the second row must all contain the set $\{1\}$, since the rows are weakly increasing. 

\begin{center}
    \ytableausetup{notabloids}
    \begin{ytableau}
    \none & \none & \none & 1 & 1 & 1\\
    1 & 1 & 1 & 2 & 2\\
     &  &  & \\
     &  & \\
     & \\
     
\end{ytableau}
\end{center}

Applying $f$ gives a tableau $T^{-}$ of shape  $\rho_{n-1}/(k)$ and content $\nu^{-}$, as illustrated below. 

\[
\ytableausetup{notabloids}
    \begin{ytableau}
    \none & \none & \none & 1 & 1 & 1 \\
    1 & 1 & 1 & 2 & 2\\
    2 & 2 & 2, 3 & 3\\
    3 & 4 & 4\\
    4 & 5\\
    6
\end{ytableau} 
\quad \iff \quad 
    \begin{ytableau}
    \none &  \none & \none & 1 & 1\\
    1 & 1 & 1, 2 & 2\\
    2 & 3 & 3\\
    3 & 4\\
    5
\end{ytableau}\]
Notice that if a tableau $T^{-}$ of shape $\rho_{n-1}/(k-1)$ has lattice reverse reading word, then it necessarily corresponds to exactly one tableau $T$ of shape $\rho_n/(k)$ with lattice reverse reading word where $B$ contains $\{1\}$. Then these two sets are in bijection, so there are $\alpha_{\rho_{n-1}/(k), \nu^{-}}$ tableaux $T$ of shape $\rho_n/(k)$ with lattice reverse reading word where $B$ contains $\{1\}$.

\bigskip

\textbf{Case 2: $B$ contains $\{2\}$}

Then the $(k-1)$th leftmost box in the second row must contain $\{1\}$, as otherwise the $(n-k+1)$th 2 would come before the $(n-k+1)$th 1 in $w(T)$ and it would not be a lattice word.

\begin{center}
    \ytableausetup{notabloids}
    \begin{ytableau}
    \none & \none & \none & 1 & 1 & 1\\
    1 & 1 & 2 & 2 & 2\\
     &  &  & \\
     &  & \\
     & \\
     
\end{ytableau}
\end{center}

Applying $f$ gives a tableau $T^{-}$ of shape $\rho_{n-1}/(k-1)$ with content $\nu^{-}$, as shown below.
\[
    \ytableausetup{notabloids}
    \begin{ytableau}
    \none & \none & \none & 1 & 1 & 1\\
    1 & 1 & 2 & 2 & 2\\
    2 & 3 & 3 & 3\\
    3 & 4 & 4\\
    4 & 5\\
    6 
    \end{ytableau} \quad \iff \quad
    \begin{ytableau}
    \none & \none & 1 & 1 & 1\\
    1 & 2 & 2 & 2\\
    2 & 3 & 3\\
    3 & 4\\
    5
    \end{ytableau} 
\]

Notice that as in case 1, $f$ is a bijection here, so there are $\alpha_{\rho_{n-1}/(k-1), \nu^{-}}$ tableaux $T$ such that $B$ contains $\{2\}.$

\bigskip

\textbf{Case 3: $B$ contains $\{1, 2\}$}

Then the leftmost $k-1$ boxes in the second row must all contain the set $\{1\}$. 
\begin{center}
    \ytableausetup{notabloids}
    \begin{ytableau}
    \none & \none & \none & 1 & 1 & 1\\
    1 & 1 & 1, 2 & 2 & 2\\
     &  &  & \\
     &  & \\
     & \\
     
\end{ytableau}
\end{center}

The map $f$ gives a tableau of shape $\rho_{n-1}/(k-1)$ with content $\nu^{-}$ which has reverse column word a lattice word. Note that in this case, the inverse map will additionally insert an extra 1 in the $k$th box of the second row, as illustrated below.

\[
\ytableausetup{notabloids}
    \begin{ytableau}
    \none & \none & \none & 1 & 1 & 1 \\
    1 & 1 & 1, 2 & 2 & 2\\
    2 & 2 & 3 & 3\\
    3 & 4 & 4\\
    4 & 5\\
    6
\end{ytableau} \quad \iff \quad 
\ytableausetup{notabloids}
    \begin{ytableau}
    \none &  \none & 1 & 1 & 1\\
    1 & 1 & 2 & 2\\
    2 & 3 & 3\\
    3 & 4\\
    5
\end{ytableau} 
\]

As above, we have a bijection, and so there are $\alpha_{\rho_{n-1}/(k-1), \nu^{-}}$ tableaux $T$ such that $B$ contains $\{1, 2\}.$

Combining all three cases, we have
\[\alpha_{\rho_n / (k), \nu} = \alpha_{\rho_{n-1} / (k), \nu^{-}} + 2\alpha_{\rho_{n-1} / (k-1), \nu^{-}}.\]

\vspace*{-2\baselineskip}
\end{proof}

There exists a similar recurrence for the $\alpha_{\lambda/\mu,\nu}$ coefficients when $\lambda = \rho_n$ and $\mu = (1^k)$ for some positive integer $k$. 
\begin{lemma}
Fix a partition $\nu = (\nu_1, \nu_2, \dots, \nu_m)$, and let $\nu^{-} = (\nu_2, \dots, \nu_m)$. For a given positive integer $n$, we have
\[\alpha_{\rho_n / (1^k), \nu} = \alpha_{\rho_{n-1} / (1^k), \nu^{-}} + 2\alpha_{\rho_{n-1} / (1^{k-1}), \nu^{-}}.\]
\end{lemma}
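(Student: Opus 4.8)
The plan is to prove this recurrence by an argument completely parallel to the previous lemma, transposing the roles of rows and columns. The previous lemma analyzed set-valued tableaux of shape $\rho_n/(k)$, where the removed cells form a horizontal strip in the first row; here the removed cells $(1^k)$ form a vertical strip in the first column. So first I would examine the forced entries: in a lattice-word filling of $\rho_n/(1^k)$, the rightmost box of the first row must contain $\{1\}$, forcing the entire first row to be $\{1\}$'s, and continuing down the rightmost boxes of each row forces the set $\{i\}$ in the rightmost box of row $i$, exactly as in Lemma \ref{contents of nu}. The key structural box to analyze is the top box $B$ of the first column, i.e.\ the box in row $k+1$, column $1$, the first box of the skew shape in that column.

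Next I would set up the same deletion map $f$ that removes all $1$'s, deletes now-empty boxes, and decrements every remaining entry by $1$; as established in the proof of the previous lemma, $f$ preserves the lattice property and sends content $\nu$ to content $\nu^-$. The crux is a three-way case split on the contents of $B$ (the topmost box of the first column of the skew shape), which by weak row-increase and strict column-increase can be $\{1\}$, $\{2\}$, or $\{1,2\}$. In the case $B = \{1\}$, all boxes above it in the column constraints force a configuration that, after applying $f$, yields a tableau of shape $\rho_{n-1}/(1^k)$ with content $\nu^-$, giving the term $\alpha_{\rho_{n-1}/(1^k),\nu^-}$. In the cases $B = \{2\}$ and $B = \{1,2\}$, applying $f$ each yields a tableau of shape $\rho_{n-1}/(1^{k-1})$ with content $\nu^-$, and in each case $f$ restricted to these configurations is a bijection, together contributing $2\alpha_{\rho_{n-1}/(1^{k-1}),\nu^-}$. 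Summing the three cases gives
\[
\alpha_{\rho_n/(1^k),\nu} = \alpha_{\rho_{n-1}/(1^k),\nu^-} + 2\alpha_{\rho_{n-1}/(1^{k-1}),\nu^-}.
\]

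The main obstacle I anticipate is verifying that each case map is genuinely a bijection, and in particular specifying the inverse map correctly. The forward map $f$ is easy to define, but one must check that given a lattice tableau of the smaller shape, there is a unique way to reinsert $1$'s and increment to recover a valid lattice tableau of shape $\rho_n/(1^k)$ landing in the prescribed case for $B$. As in Case 3 of the previous lemma, the inverse in the $B=\{1,2\}$ case must insert an extra $1$ into the relevant box, and one must confirm that this reinsertion is forced and that the resulting reverse reading word remains a lattice word. The column-strictness constraint (rather than the row-weakness constraint that dominated the previous lemma) is what governs the vertical-strip case, so the careful point is checking that the forced $\{i\}$ entries down the first column interact correctly with $B$ to leave exactly the claimed three possibilities and no others.
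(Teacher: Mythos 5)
There is a genuine gap in your case analysis: the trichotomy ``$B$ is $\{1\}$, $\{2\}$, or $\{1,2\}$'' is false for the vertical-strip shape. In the horizontal-strip lemma that constraint came from the fact that $B$ sat in the \emph{second row} with boxes containing $\{2\}$ immediately to its right, so weak row-increase forced $\max B \le 2$. Here $B$ sits at the top of the \emph{first column}, in row $k+1$, with no box above it; the box to its right lies directly below the forced entry $\{k\}$ in row $k$, so by column-strictness its entries are all at least $k+1$, and weak row-increase therefore imposes only $\max B \le k+1$ (or more). Consequently $B$ can be sets such as $\{3\}$, $\{2,3\}$, or $\{1,3\}$ while $w(T)$ remains a lattice word --- indeed the paper's own illustrations for this lemma use $B=\{2,3\}$ and $B=\{1,3\}$. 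The correct trichotomy, which the paper uses, is: (i) $B=\{1\}$ exactly; (ii) $B$ contains no $1$ at all; (iii) $B$ contains a $1$ and has size at least $2$. Cases (ii) and (iii) each map under the deletion map $f$ bijectively onto \emph{all} lattice tableaux of shape $\rho_{n-1}/(1^{k-1})$, giving the coefficient $2$.

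With your restricted cases, the proof fails in two ways. First, tableaux with $B=\{3\}$, $\{2,3\}$, $\{1,3\}$, etc.\ are simply not counted, so the three cases do not exhaust $\alpha_{\rho_n/(1^k),\nu}$ and the left-hand side of the recurrence is undercounted. Second, your claimed bijections in the $B=\{2\}$ and $B=\{1,2\}$ cases cannot be surjective onto the lattice tableaux of shape $\rho_{n-1}/(1^{k-1})$: for instance, a target tableau whose box in the corresponding position is $\{1,2\}$ pulls back (by incrementing and re-inserting the forced row of $1$'s) to a tableau with $B=\{2,3\}$, which lies outside both of your cases. The overall architecture of your argument --- forced rows $1,\dots,k$, the deletion map $f$, a three-way split on $B$, and the resulting shapes $\rho_{n-1}/(1^k)$ and $\rho_{n-1}/(1^{k-1})$ --- matches the paper, but the split must be on whether $B$ equals $\{1\}$, omits $1$, or properly contains $1$, not on $B$ being one of three specific sets.
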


\begin{proof}
Consider a set-valued tableau $T$ of shape $\rho_{n}/(1^k)$ such that $w(T)$ is a lattice word with content $\nu$. The rightmost box in the first row of $T$ must contain the set $\{1\}$, so all boxes in the first row contain the set $\{1\}$. The rightmost box in the second row must contain the set $\{2\}$, so all boxes in the second row must contain the set $\{2\}$. Analogously, all boxes in the $j$th row for $1\le j\le k$ must contain the set $\{j\}$. 

There are three possibilities for the leftmost box $B$ in the $k+1$th row: $B$ contains $\{1\}$, a set without a 1, or a set containing 1 of size at least 2, as illustrated below. Define $f$ as in the previous lemma. 

\begin{center}
    \ytableausetup{notabloids}
    \begin{ytableau}
    \none & 1 & 1 & 1 & 1 & 1\\
    \none & 2 & 2 & 2 & 2\\
    \none & 3 & 3 & 3\\
    B &  & \\
     & \\
     
\end{ytableau}
\end{center}

\textbf{Case 1: $B$ contains $\{1\}$}

There is a single 1 in each column, so applying $f$ gives a tableau $T^{-}$ of shape $\rho_{n-1}/(k)$, as illustrated below. 

Notice that if a tableau $T^{-}$ of shape $\rho_{n-1}/(1^k)$ has lattice reverse reading word, then it necessarily corresponds to exactly one tableau $T$ of shape $\rho_n/(1^k)$ with lattice reverse reading word where $B$ contains $\{1\}$ (given by adding 1 to each number and adding a box containing 1 to the top of each column). Then these two sets are in bijection, so there are $\alpha_{\rho_{n-1}/(1^k), \nu^{-}}$ tableaux $T$ of shape $\rho_n/(1^k)$ with lattice reverse reading word where $B$ contains $\{1\}$.

Then this case contributes $\alpha_{\rho_{n-1}/(1^k), \nu^{-}}$ tableaux. 
\[
\ytableausetup{notabloids}
\begin{ytableau}
    \none & 1 & 1 & 1 & 1 & 1\\
    \none & 2 & 2 & 2 & 2\\
    \none & 3 & 3 & 3\\
    1 & 4 & 4\\
    3, 4 & 5\\
    6
     \end{ytableau}
      \quad \iff \quad
    \begin{ytableau}
    \none & 1 & 1 & 1 & 1\\
    \none & 2 & 2 & 2\\
    \none & 3 & 3\\
     2, 3 & 4\\
     5
     \end{ytableau}
     \]
\bigskip

\textbf{Case 2: $B$ does not contain a 1}

Then the only 1's in the entire set-valued tableau are in the first row, so $f$ takes $T$ to a tableau of shape $\rho_{n-1}/(k-1)$, as illustrated below. Notice that this is a bijection, so there are $\alpha_{\rho_{n-1}/(1^{k-1}), \nu^{-}}$ tableaux $T$ where $B$ does not contain a 1. 
\[
    \ytableausetup{notabloids}
    \begin{ytableau}
    \none & 1 & 1 & 1 & 1 & 1\\
    \none & 2 & 2 & 2 & 2\\
    \none & 3 & 3 & 3\\
    2, 3 & 4 & 4\\
    5 & 5\\
    6
     \end{ytableau}
     \quad \iff \quad 
         \ytableausetup{notabloids}
    \begin{ytableau}
    \none & 1 & 1 & 1 & 1\\
    \none & 2 & 2 & 2\\
    1, 2 & 3 & 3\\
     4 & 4\\
     5
     \end{ytableau}
     \]
\bigskip

\textbf{Case 3: $B$ contains a set of size $\ge 2$ with a 1} 

The 1's in the tableau lie either in the top row or in $B$. Since the box containing $B$ is not deleted by $f$, $T$ is mapped to a tableau of shape $\rho_{n-1}/(k-1)$, as illustrated below. So there are $\alpha_{\rho_{n-1}/(1^{k-1}), \nu^{-}}$ tableaux for this case, since $f$ provides a bijection.
\[
    \ytableausetup{notabloids}
    \begin{ytableau}
    \none & 1 & 1 & 1 & 1 & 1\\
    \none & 2 & 2 & 2 & 2\\
    \none & 3 & 3 & 3\\
    1, 3 & 4 & 4\\
    5 & 5\\
     6
     \end{ytableau}
     \quad \iff \quad 
         \ytableausetup{notabloids}
    \begin{ytableau}
    \none & 1 & 1 & 1 & 1\\
    \none & 2 & 2 & 2\\
    2 & 3 & 3\\
     3 & 4\\
     5
     \end{ytableau}
     \]

Combining all three cases,
\[\alpha_{\rho_n / (1^k), \nu} = \alpha_{\rho_{n-1} / (1^k), \nu^{-}} + 2\alpha_{\rho_{n-1} / (1^{k-1}), \nu^{-}}.\]

\vspace*{-2\baselineskip}
\end{proof}

Using induction and combining the two previous lemmas, we have the following equality between $\alpha$ coefficients.

\begin{lemma}\label{alpha equality}
We have $\alpha_{\rho_{n}/(k), \nu} = \alpha_{\rho_{n}/(1^k), \nu}$ for all positive integers $n$ and nonnegative integers $k$.
\end{lemma}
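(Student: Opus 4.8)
The plan is to prove Lemma \ref{alpha equality} by induction on $n$, using the two recurrences just established as the inductive engine. The two previous lemmas give
\[\alpha_{\rho_n/(k),\nu} = \alpha_{\rho_{n-1}/(k),\nu^{-}} + 2\alpha_{\rho_{n-1}/(k-1),\nu^{-}}\]
and the identical recurrence with every $(j)$ replaced by $(1^j)$. The two recurrences are \emph{structurally identical}: they express the coefficient at level $n$ as the same $\mathbb{Z}$-linear combination of two coefficients at level $n-1$, one with the same $k$ and one with $k-1$, both with the reduced partition $\nu^{-}$. So if the desired equality $\alpha_{\rho_{n-1}/(j),\nu'} = \alpha_{\rho_{n-1}/(1^j),\nu'}$ already holds for \emph{all} $j$ and all $\nu'$ at level $n-1$, then applying each recurrence term-by-term forces the equality at level $n$.

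First I would set up the induction carefully, noting that the statement must be proved simultaneously for all $k$ and all $\nu$ at a fixed $n$, since the recurrence for a given $(n,k,\nu)$ reaches back to coefficients with different $k$ and different $\nu$. Concretely, the inductive hypothesis is that $\alpha_{\rho_{n-1}/(j),\mu} = \alpha_{\rho_{n-1}/(1^j),\mu}$ holds for every nonnegative integer $j$ and every partition $\mu$. The inductive step is then the one-line computation
\[\alpha_{\rho_n/(k),\nu} = \alpha_{\rho_{n-1}/(k),\nu^{-}} + 2\alpha_{\rho_{n-1}/(k-1),\nu^{-}} = \alpha_{\rho_{n-1}/(1^k),\nu^{-}} + 2\alpha_{\rho_{n-1}/(1^{k-1}),\nu^{-}} = \alpha_{\rho_n/(1^k),\nu},\]
where the middle equality is two applications of the inductive hypothesis (with $j=k$ and $j=k-1$) and the outer equalities are the two recurrences.

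Next I would dispatch the base case and the degenerate boundary situations, which is where the only real care is needed. When $k=0$ we have $\mu = (0) = (1^0) = \emptyset$, so $\rho_n/(k)$ and $\rho_n/(1^k)$ are literally the same shape and the equality is trivial; this anchors the $k$-direction. For the base of the $n$-induction, one can take $n=1$ (or $n=0$): here $\rho_1 = (1)$, the only nonempty admissible $\mu$ is $k=1$, and both $(1)/(1)$ and $(1)/(1^1)$ are the empty skew shape, so both sides agree. I should also confirm that the recurrences remain valid at the edges—in particular that the $k-1$ term vanishes appropriately when $k$ exceeds the available rows or columns, and that $\alpha_{\rho_n/(k),\nu} = 0$ whenever the shape is undefined (e.g. $k > n$)—so that the term-by-term substitution is legitimate for every $k$.

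The main obstacle is not any single hard estimate but rather the bookkeeping of the double induction: one must make sure the recurrences hold for the full range of $k$ invoked (including the boundary values $k$ and $k-1$ that may fall outside the "generic" regime treated in the two preceding lemmas), and that the inductive hypothesis is quantified over all $k$ and all $\nu$ rather than a fixed choice. Once that quantification is stated correctly, the proof collapses to matching the two identical recurrences term-by-term, so I expect the writeup to be short.
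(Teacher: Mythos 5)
Your inductive skeleton is exactly the paper's proof: induction on $n$, with the inductive step being the term-by-term matching of the two structurally identical recurrences, the hypothesis quantified over all $k$ and all $\nu$, and the cases $k=0$ and $k>n$ dispatched trivially. However, there is a genuine gap in how you handle the top boundary, and it is precisely the case your plan waves away. You assert that the term-by-term substitution is "legitimate for every $k$" because out-of-range terms vanish and $\alpha_{\rho_n/(k),\nu}=0$ whenever $k>n$. That is fine for $k>n+1$, but it is false at $k=n+1$ (removal of the entire first row, resp.\ entire first column, of $\rho_{n+1}$): there the skew shape is nonempty --- it is a translate of $\rho_n$ --- and the recurrence itself is invalid, not merely truncated. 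Both recurrence lemmas are proved (and, in the row case, stated in the paper with the explicit hypothesis $k<n$) under the assumption that the first row of the skew shape is nonempty, since the case analysis begins with ``the rightmost box of the first row must contain $\{1\}$.'' Concretely, at $k=n+1$ the recurrence would claim
\[
\alpha_{\rho_{n+1}/(n+1),\nu} \;=\; \alpha_{\rho_{n}/(n+1),\nu^{-}} + 2\,\alpha_{\rho_{n}/(n),\nu^{-}} \;=\; 2\,\alpha_{\rho_{n}/(n),\nu^{-}},
\]
and for $n=1$, $\nu=(1)$ this reads $\alpha_{(2,1)/(2),(1)} = 2\,\alpha_{(1)/(1),\emptyset}$, i.e.\ $1=2$, which is false. (The failure is symmetric between $(k)$ and $(1^k)$, so the desired \emph{conclusion} at $k=n+1$ happens to be true, but your derivation of it rests on two false identities.)

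The repair is what the paper does: treat $k=n+1$ by a separate direct argument. The shapes $\rho_{n+1}/(n+1)$ and $\rho_{n+1}/(1^{n+1})$ are both translates of the staircase $\rho_n$, and translation changes neither the set-valued tableaux nor their reverse reading words, so $\alpha_{\rho_{n+1}/(n+1),\nu} = \alpha_{\rho_n,\nu} = \alpha_{\rho_{n+1}/(1^{n+1}),\nu}$ with no recurrence needed. Note this case cannot be skipped even if one only cares about the generic range: the equality at index $n+1$ on level $n+1$ is consumed by the inductive hypothesis when the recurrence is applied at level $n+2$ with $k=n+1$, so the full-row/full-column case must be re-established at every level of the induction. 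With that one boundary case repaired, your argument is complete and coincides with the paper's.
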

\begin{proof}
We use induction on $n$ and only consider $k\le n$, because otherwise $\alpha_{\rho_{n}/(k), \nu} = \alpha_{\rho_{n}/(1^k), \nu} = 0$. The base case of $n = 1$ is true because $\rho_1/(k) = \rho_1/(1^k)$ for $k = 0, 1$. Now suppose that for a given $n$, $\alpha_{\rho_{n}/(k), \nu} = \alpha_{\rho_{n}/1^k, \nu}$ for all $k\le n$. Note that $\rho_{n+1}/(k) = \rho_{n+1} = \rho_{n+1}/(1^k)$ for $k = 0$. For $1\le k \leq n$, 

\begin{align*}
\alpha_{\rho_{n+1}/(k), \nu} &= \alpha_{\rho_{n}/(k), \nu^{-}} + 2\alpha_{\rho_{n}/(k-1), \nu^{-}} \\
                        &= \alpha_{\rho_{n}/(1^k), \nu^{-}} + 2\alpha_{\rho_{n}/(1^{k-1}), \nu^{-}} \\
                        &= \alpha_{\rho_{n+1}/(1^k), \nu}.
\end{align*}

In addition, for $k = n+1$, we have $\rho_{n+1}/(n+1) = \rho_{n} = \rho_{n+1}/(1^{n+1})$ by a translation. All together, $\alpha_{\rho_{n+1}/(k), \nu} = \alpha_{\rho_{n+1}/(1^{k}), \nu}$ for all $k \leq n+1$. Thus, by induction, $\alpha_{\rho_{n}/(k), \nu} = \alpha_{\rho_{n}/(1^k), \nu}$ for all positive integers $n$ and nonnegative integers $k$.
\end{proof}
As a result of this relation between the $\alpha$ coefficients, we can now prove a Stembridge-type equality for $G_{\rho/\mu}$ in the special case where $\mu = (k)$ for some nonnegative integer $k$.
\begin{theorem}\label{G k equality}
   There is a Stembridge-type equality for the skew stable Grothendieck polynomial in the case $\mu=(k)$, i.e. $$G_{\rho/(k)} = G_{\rho/(1^k)}.$$
\end{theorem}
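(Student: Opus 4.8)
The plan is to read off the result directly from Buch's expansion theorem (Theorem \ref{big G buch}) together with the coefficient identity just established in Lemma \ref{alpha equality}. Applying Theorem \ref{big G buch} to each side, I would write
\[
G_{\rho/(k)} = \sum_{\nu} (-1)^{|\nu| - |\rho/(k)|} \alpha_{\rho/(k), \nu} G_{\nu}, \qquad
G_{\rho/(1^k)} = \sum_{\nu} (-1)^{|\nu| - |\rho/(1^k)|} \alpha_{\rho/(1^k), \nu} G_{\nu},
\]
where both sums range over all partitions $\nu$ and express each skew stable Grothendieck polynomial in the basis $\{G_\nu\}$ of $\hat{\Lambda}$.

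The two expansions will then be compared term by term. First I would observe that the sign exponents agree: since $|(k)| = k = |(1^k)|$, we have $|\rho/(k)| = |\rho| - k = |\rho/(1^k)|$, so $(-1)^{|\nu| - |\rho/(k)|} = (-1)^{|\nu| - |\rho/(1^k)|}$ for every $\nu$. Next, Lemma \ref{alpha equality} (applied with $\rho = \rho_n$) gives $\alpha_{\rho/(k), \nu} = \alpha_{\rho/(1^k), \nu}$ for all $\nu$. Hence the coefficient of each $G_\nu$ in the first expansion equals the corresponding coefficient in the second, and the uniqueness of the representation in the $\{G_\nu\}$ basis forces $G_{\rho/(k)} = G_{\rho/(1^k)}$.

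There is essentially no obstacle remaining at this stage: the substantive content lives in Lemma \ref{alpha equality}, whose proof by induction on $n$ combines the two bijective recurrences for $\mu = (k)$ and $\mu = (1^k)$. The only point requiring a moment's care in assembling the theorem is confirming that the sign prefactors in Buch's formula coincide, which reduces to the trivial fact that conjugate partitions have the same size. I therefore expect this theorem to follow as a short corollary of the preceding combinatorial work.
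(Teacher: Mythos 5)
Your proposal is correct and is essentially identical to the paper's own proof: both expand $G_{\rho/(k)}$ and $G_{\rho/(1^k)}$ via Theorem \ref{big G buch} and match coefficients using Lemma \ref{alpha equality}, with the sign agreement following from $|(k)| = |(1^k)|$. Your explicit remarks about the signs and the uniqueness of the $\{G_\nu\}$ expansion are points the paper leaves implicit, but the argument is the same.
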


\begin{proof}
    Combining Lemma \ref{alpha equality} with Theorem \ref{big G buch},
\begin{align*}
    G_{\rho / (k)} &= \sum_{\nu} (-1)^{|\nu|-|\rho/(k)|}\alpha_{\rho/(k), \nu} G_{\nu} \\
    &= \sum_{\nu} (-1)^{|\nu|-|\rho/(1^k)|}\alpha_{\rho/(1^k), \nu} G_{\nu} \\
    &= G_{\rho / (1^k)}.
\end{align*}

\vspace*{-2\baselineskip}
\end{proof}

\subsection{Proof for All Partitions}

Now, we will use the Hopf algebraic structure to extend this result to all $\mu,$ proving Theorem \ref{main result big g}, an analogue of the Stembridge equality for $G_{\rho/\mu}$.  First, we introduce two definitions and a useful theorem from Buch \cite{buch}.

\begin{definition}
    A \emph{rook strip} is a skew partition $\mu/\sigma$ that contains at most one box in each row and column.
\end{definition}

The following definition (\cite{buch}, Equation 6.4), will allow us to utilize the Hopf algebraic structure of $\Lambda$.
\begin{definition}[Buch]
Define $G_{\lambda//\mu}$ as
\[G_{\lambda // \mu} = \sum_{\sigma} (-1)^{|\mu / \sigma|} G_{\lambda/\sigma},\] 
where the sum is over all $\sigma$ such that $\mu / \sigma$ is a rook strip.
\end{definition}

The polynomials $G_{\lambda/\mu}$ are related to the polynomials $G_{\lambda//\mu}$ by the following theorem, as characterized by Buch (\cite{buch}, Equation 7.4).
\begin{theorem}[Buch]
We have
\[G_{\lambda/\mu} = \sum_{\sigma \subseteq \mu} G_{\lambda // \sigma}.\]
\end{theorem}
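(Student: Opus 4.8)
The plan is to prove the identity as the Möbius-style inversion of the defining relation for $G_{\lambda//\mu}$, by direct substitution and sign cancellation. Concretely, I would substitute the definition $G_{\lambda//\sigma} = \sum_{\tau} (-1)^{|\sigma/\tau|} G_{\lambda/\tau}$ (the sum over $\tau$ with $\sigma/\tau$ a rook strip) into the right-hand side $\sum_{\sigma \subseteq \mu} G_{\lambda//\sigma}$, interchange the two summations, and collect the coefficient of each $G_{\lambda/\tau}$. After reindexing, the coefficient of $G_{\lambda/\tau}$ is
\[ \sum_{\tau \subseteq \sigma \subseteq \mu,\ \sigma/\tau \text{ a rook strip}} (-1)^{|\sigma/\tau|}, \]
so the entire identity reduces to showing that this coefficient equals $1$ when $\tau = \mu$ and $0$ whenever $\tau \subsetneq \mu$. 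Note that no linear independence of the $G_{\lambda/\tau}$ is required, since the collapse of the double sum to $G_{\lambda/\mu}$ is a purely formal computation once the coefficients are known.

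The heart of the argument is a clean description of the partitions $\sigma$ with $\tau \subseteq \sigma \subseteq \mu$ and $\sigma/\tau$ a rook strip. I claim these are exactly the shapes obtained from $\tau$ by adding an arbitrary subset of its \emph{$\mu$-addable outer corners}, namely the cells $(i, \tau_i + 1)$ with $\tau_i < \tau_{i-1}$ (an outer corner of $\tau$) and $\tau_i + 1 \le \mu_i$ (room inside $\mu$). The key observations are that the outer corners of any partition lie in pairwise distinct rows and pairwise distinct columns, so adding any subset of them automatically yields a rook strip and preserves the partition shape; and conversely, the rook-strip condition forbids adding a cell in a row $i$ with $\tau_i = \tau_{i-1}$, since maintaining the partition shape would then force an added cell in row $i-1$ lying in the very same column. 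Thus the valid $\sigma$ are in bijection with subsets $S$ of the set $C$ of $\mu$-addable outer corners of $\tau$, with $|\sigma/\tau| = |S|$, and the $\subseteq \mu$ constraint factorizes cell-by-cell.

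Given this description, the coefficient becomes $\sum_{S \subseteq C} (-1)^{|S|} = (1-1)^{|C|}$, which is $1$ if $C = \emptyset$ and $0$ otherwise. It then remains to check that $C = \emptyset$ if and only if $\tau = \mu$: if $\tau = \mu$ there is no row with $\tau_i < \mu_i$, whereas if $\tau \subsetneq \mu$ the smallest row $i$ with $\tau_i < \mu_i$ satisfies $\tau_{i-1} = \mu_{i-1} \ge \mu_i > \tau_i$, exhibiting a $\mu$-addable outer corner. Hence the coefficient of $G_{\lambda/\tau}$ is $\delta_{\tau\mu}$, and the right-hand side collapses to $G_{\lambda/\mu}$, as desired.

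I expect the main obstacle to be the structural lemma in the second paragraph, namely rigorously establishing the bijection between the valid $\sigma$ and the subsets of $\mu$-addable outer corners of $\tau$, since this is exactly the point where the rook-strip condition (at most one cell per row and per column) and the partition condition interact and must be reconciled. Once that lemma is in place, the sign cancellation is the standard inclusion–exclusion identity $(1-1)^{|C|}$, and the characterization $C = \emptyset \iff \tau = \mu$ is a short argument about the first boundary cell where $\tau$ and $\mu$ differ.
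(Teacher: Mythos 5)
Your proof is correct. One thing to note up front: the paper does not actually prove this statement---both the rook-strip definition of $G_{\lambda//\mu}$ and this identity are imported from Buch's paper (cited there as his Equations (6.4) and (7.4)) without proof---so there is no in-paper argument to compare yours against; your proposal supplies a proof where the paper has only a citation. Relative to the paper's definitions, your reduction is exactly right: substituting the rook-strip expansion and exchanging the two (finite) sums reduces the identity to your structural lemma, which is the real content and which holds as stated. The partitions $\sigma$ with $\tau \subseteq \sigma \subseteq \mu$ and $\sigma/\tau$ a rook strip are precisely those obtained from $\tau$ by adding a subset of its $\mu$-addable outer corners: the outer corners of a partition lie in pairwise distinct rows and columns (if rows $i < i'$ had addable corners in the same column, then $\tau_{i'-1} \le \tau_i = \tau_{i'}$ would contradict addability in row $i'$), so any subset may be added; conversely, an added cell in a row $i$ with $\tau_i = \tau_{i-1}$ would, to keep $\sigma$ a partition, force an added cell in row $i-1$ in the same column, violating the rook-strip condition. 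The coefficient of $G_{\lambda/\tau}$ is then $\sum_{S \subseteq C} (-1)^{|S|} = (1-1)^{|C|}$, and your identification of $C = \emptyset$ with $\tau = \mu$ is also correct, including at the boundary cases (row $1$, and rows beyond the length of $\tau$). You are right that no linear independence of the $G_{\lambda/\tau}$ is needed, since the double sum collapses identically to $G_{\lambda/\mu}$. The upshot is that your argument makes this part of the paper self-contained: it exhibits the quoted theorem as a pure inclusion--exclusion consequence of the quoted definition, with no set-valued tableau combinatorics required; what it does not recover is the Hopf-algebraic role of $G_{\lambda//\mu}$ in the coproduct $\Delta(G_{\lambda}) = \sum_{\nu} G_{\nu} \otimes G_{\lambda//\nu}$, which is the form in which the paper actually uses these polynomials afterwards.
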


From the above definitions and theorem, we may prove the following lemma.
\begin{lemma}\label{big G for k}
If $\rho$ is a staircase shape $\rho = (n, n-1, \dots, 1),$ then $G_{\rho // (k)} = G_{\rho // (1^k)}$. 
\end{lemma}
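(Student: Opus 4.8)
The plan is to reduce both $G_{\rho // (k)}$ and $G_{\rho // (1^k)}$ to the already-established equality $G_{\rho/(k)} = G_{\rho/(1^k)}$ of Theorem \ref{G k equality}. The starting point is the defining expansion $G_{\lambda // \mu} = \sum_{\sigma} (-1)^{|\mu/\sigma|} G_{\lambda/\sigma}$, where $\sigma$ ranges over subpartitions of $\mu$ for which $\mu/\sigma$ is a rook strip. The key observation is that when $\mu$ is a single row $(k)$, the rook strip condition is very restrictive: every box of $(k)$ lies in the same row, and a rook strip admits at most one box per row, so the skew shape $(k)/\sigma$ can contain at most one box. Hence the only admissible $\sigma$ are $(k)$ itself, giving the empty skew shape, and $(k-1)$, giving a single box.

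First I would carry out this enumeration explicitly. The sign $(-1)^{|\mu/\sigma|}$ contributes $+1$ for $\sigma = (k)$ and $-1$ for $\sigma = (k-1)$, so that
\[
G_{\rho // (k)} = G_{\rho/(k)} - G_{\rho/(k-1)}.
\]
By the identical argument applied to the single column $(1^k)$ --- where the rook strip condition now forces at most one box because all boxes share a column --- the only admissible $\sigma$ are $(1^k)$ and $(1^{k-1})$, and I would obtain
\[
G_{\rho // (1^k)} = G_{\rho/(1^k)} - G_{\rho/(1^{k-1})}.
\]

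Next I would invoke Theorem \ref{G k equality} twice, once with exponent $k$ and once with exponent $k-1$, giving $G_{\rho/(k)} = G_{\rho/(1^k)}$ and $G_{\rho/(k-1)} = G_{\rho/(1^{k-1})}$. Substituting these into the two displays above yields $G_{\rho // (k)} = G_{\rho // (1^k)}$, as desired. The degenerate cases are harmless: for $k = 1$ one reads $(k-1) = (1^{k-1}) = \emptyset$, so both subtracted terms equal $G_{\rho}$, and for $k = 0$ the two sides coincide trivially.

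The argument is essentially immediate once the rook strip expansion is in hand, so I do not anticipate a serious obstacle; the one point requiring care is the enumeration of rook strips. It is worth double-checking that a single row and a single column each admit exactly two valid $\sigma$ (removing zero or one box) and that no other removal is permitted --- for instance, deleting two non-adjacent boxes from $(k)$ still violates the one-box-per-row condition and so does not produce a rook strip. Once this combinatorial bookkeeping is confirmed, the reduction to Theorem \ref{G k equality} completes the proof.
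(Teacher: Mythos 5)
Your proposal is correct and follows essentially the same route as the paper: enumerate the rook strips of a single row (respectively column) to get $G_{\rho //(k)} = G_{\rho/(k)} - G_{\rho/(k-1)}$ and $G_{\rho //(1^k)} = G_{\rho/(1^k)} - G_{\rho/(1^{k-1})}$, then apply Theorem \ref{G k equality} for both $k$ and $k-1$. Your additional checks (the degenerate cases and the verification that no other $\sigma$ yields a rook strip) are fine but only make explicit what the paper leaves implicit.
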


\begin{proof}

In order for $(k)/\sigma$ to be a rook strip, we need $\sigma = (k)$ or $(k-1)$, so
\[
G_{\rho // (k)} = G_{\rho / (k)} - G_{\rho / (k-1)}.
\]
Similarly, 
\[
G_{\rho // (1^k)} = G_{\rho / (1^k)} - G_{\rho / (1^{k-1})}.
\]
Combining with Theorem \ref{G k equality}, we have $G_{\rho // (k)} = G_{\rho // (1^k)}$. 
\end{proof}

Buch (\cite{buch}, Example 6.8) states that $\Delta(G_{\rho}) = \sum_{\nu \subseteq \rho}G_{\nu} \otimes G_{\rho // \nu}.$
Then, using the skewing operator from Definition \ref{skewing} and the identity $\langle g_{\lambda}, G_{\mu}\rangle = \delta_{\lambda\mu}$, we have
\begin{align*}
g_{\mu}^{\perp}(G_{\rho}) &= \sum_{\nu} \langle g_{\mu}, G_{\nu}\rangle G_{\rho // \nu} \\
&= \langle g_{\mu}, G_{\mu} \rangle G_{\rho // \mu} \\
&= G_{\rho // \mu}.
\end{align*}

In \cite{duality and deformations}, Yeliussizov constructs $\overline{\tau}$ by linearly extending $g_{\lambda}$ $\mapsto g_{\lambda^T},$ and he shows that $\overline{\tau}$ is a ring homomorphism and an involution. We use $\overline{\tau}$ in order to extend Lemma \ref{big G for k} to all $\mu.$ 

\begin{lemma}
Let $\psi$ be an arbitrary ring homomorphism of $\Lambda$. Then the set $A = \{f \in \Lambda: f^{\perp}(a) = \psi(f)^{\perp}(a)$ for $a \in \hat{\Lambda} \}$ is a subalgebra of $\Lambda.$
\end{lemma}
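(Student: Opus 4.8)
The plan is to prove that $A = \{f \in \Lambda : f^{\perp}(a) = \psi(f)^{\perp}(a) \text{ for } a \in \hat{\Lambda}\}$ is a subalgebra by following essentially the same strategy as in Lemma \ref{subalgebra for arbitrary homomorphism}, with two adjustments: the roles of $\Lambda$ and $\hat{\Lambda}$ are swapped (now $f, g \in \Lambda$ while $a \in \hat{\Lambda}$), and we must verify closure under finite rather than infinite addition, since a subalgebra only requires the algebra axioms. First I would note that $A$ is nonempty (it contains $0$, and closure under addition together with scalar behavior of $\perp$ will give it the structure of a vector subspace), and that closure under finite addition is immediate: for $f_1, f_2 \in A$, linearity of $\perp$ and of $\psi$ give $(f_1 + f_2)^{\perp}(a) = f_1^{\perp}(a) + f_2^{\perp}(a) = \psi(f_1)^{\perp}(a) + \psi(f_2)^{\perp}(a) = \psi(f_1 + f_2)^{\perp}(a)$, so $f_1 + f_2 \in A$.

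The substantive step is closure under multiplication. For $f_1, f_2 \in A$, I would show $f_1 f_2 \in A$ by the same chain of manipulations as in the earlier lemma, but now using Lemma \ref{skewingProperties2} in place of Lemma \ref{skewingProperties1}, since here $a \in \hat{\Lambda}$ and the test element lies in $\Lambda$. Concretely, for any $g \in \Lambda$ I would compute
\begin{align*}
\langle g, (f_1 f_2)^{\perp}(a)\rangle &= \langle f_1 f_2 g, a\rangle = \langle f_2 g, f_1^{\perp}(a)\rangle = \langle f_2 g, \psi(f_1)^{\perp}(a)\rangle \\
&= \langle \psi(f_1) f_2 g, a\rangle = \langle \psi(f_1) g, f_2^{\perp}(a)\rangle = \langle \psi(f_1) g, \psi(f_2)^{\perp}(a)\rangle \\
&= \langle \psi(f_2)\psi(f_1) g, a\rangle = \langle \psi(f_1 f_2) g, a\rangle = \langle g, \psi(f_1 f_2)^{\perp}(a)\rangle,
\end{align*}
where the third and sixth equalities use $f_1, f_2 \in A$, and the penultimate uses that $\psi$ is a ring homomorphism (so $\psi(f_1 f_2) = \psi(f_1)\psi(f_2)$, and commutativity of $\Lambda$ lets us reorder the product). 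Since this holds for all $g \in \Lambda$, and two elements of $\hat{\Lambda}$ that pair identically with every $g \in \Lambda$ must be equal (each is uniquely a possibly-infinite combination of Schur functions, detected by pairing against the $s_\lambda$, exactly as argued in the proof of Lemma \ref{subalgebra for arbitrary homomorphism}), we conclude $(f_1 f_2)^{\perp}(a) = \psi(f_1 f_2)^{\perp}(a)$, i.e. $f_1 f_2 \in A$.

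The main obstacle to watch is the bookkeeping around which pairing lemma applies and the finiteness of the relevant sums: because $f_1, f_2, g$ now all lie in $\Lambda$, every product such as $f_1 f_2 g$ is a genuine element of $\Lambda$, so each inner product $\langle \,\cdot\,, a\rangle$ with $a \in \hat{\Lambda}$ is a finite sum and Lemma \ref{skewingProperties2} applies cleanly at each step; the separation-of-elements argument needs only that an element of $\hat{\Lambda}$ is determined by its pairings against the Schur basis of $\Lambda$. I would then close by remarking that since $A$ contains the multiplicative identity and is closed under addition, scalar multiplication, and multiplication, it is a subalgebra of $\Lambda$, as claimed.
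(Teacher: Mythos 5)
Your proposal is correct and follows exactly the route the paper takes: the paper's proof of this lemma simply says to repeat the argument of Lemma \ref{subalgebra for arbitrary homomorphism} with Lemma \ref{skewingProperties2} in place of Lemma \ref{skewingProperties1}, which is precisely the chain of inner-product manipulations and the separation-by-pairing argument you carried out. Your explicit observations — that only finite addition is needed since $A \subseteq \Lambda$, and that all products now lie in $\Lambda$ so every pairing is a finite sum — are the correct bookkeeping the paper leaves implicit.
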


\begin{proof}
From a similar argument as Lemma \ref{subalgebra for arbitrary homomorphism}, using Lemma \ref{skewingProperties2}, we can show that for $f_1, f_2 \in A,$ we have $f_1+f_2 \in A$ and $f_1f_2 \in A.$
\end{proof}

\begin{corollary}\label{closed for big G}
The set $A = \{f \in \Lambda : f^{\perp}(G_{\rho}) = \overline{\tau}(f)^{\perp} (G_{\rho})\}$ is a subalgebra of $\Lambda.$ 
\end{corollary}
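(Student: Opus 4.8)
The plan is to obtain this as an immediate specialization of the preceding lemma, exactly as Corollary \ref{set is algebra for tau} was deduced from Lemma \ref{subalgebra for arbitrary homomorphism} in the dual case. The preceding lemma asserts that for \emph{any} ring homomorphism $\psi$ of $\Lambda$ and any fixed $a \in \hat{\Lambda}$, the set $\{f \in \Lambda : f^{\perp}(a) = \psi(f)^{\perp}(a)\}$ is a subalgebra of $\Lambda$. So the strategy is simply to check that the present situation is an instance of this general statement under the substitutions $\psi = \overline{\tau}$ and $a = G_{\rho}$.

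First I would verify the two hypotheses. For the ring-homomorphism hypothesis, I invoke Yeliussizov's construction in \cite{duality and deformations}, recalled just above, which says that $\overline{\tau}$, obtained by linearly extending $g_{\lambda} \mapsto g_{\lambda^T}$, is a ring homomorphism of $\Lambda$ (in fact an involution); thus $\psi = \overline{\tau}$ is a legitimate choice. For the element $a$, I note that $G_{\rho}$ is a symmetric formal power series, hence lies in $\hat{\Lambda}$, which is precisely the range allowed for $a$ in the lemma; moreover the skewing action $f^{\perp}(G_{\rho})$ for $f \in \Lambda$ is well defined, as was already used above in establishing $g_{\mu}^{\perp}(G_{\rho}) = G_{\rho // \mu}$. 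With these two choices, the set appearing in the lemma becomes exactly $A = \{f \in \Lambda : f^{\perp}(G_{\rho}) = \overline{\tau}(f)^{\perp}(G_{\rho})\}$, and the lemma's conclusion directly yields that this $A$ is a subalgebra of $\Lambda$.

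The main point is that there is essentially no obstacle in the corollary itself: all of the genuine work is carried by the preceding lemma (the $G$-analogue of Lemma \ref{subalgebra for arbitrary homomorphism}), whose proof rests on the adjunction $\langle g, f^{\perp}(a)\rangle = \langle fg, a\rangle$ of Lemma \ref{skewingProperties2} for $f,g \in \Lambda$ and $a \in \hat{\Lambda}$. Consequently the only thing I would need to confirm is that the three structural requirements — $\psi$ a ring homomorphism of $\Lambda$, $a \in \hat{\Lambda}$, and well-definedness of the skewing — transfer verbatim, which they do. This makes the corollary a one-line consequence, and it sets up the final argument: since $A$ is a subalgebra of $\Lambda$, membership of a generating family (obtained in the analogue of Theorem \ref{true for all e_k}) will propagate to all of $\Lambda$, and in particular to the $g_{\mu}$, forcing $G_{\rho/\mu} = G_{\rho/\mu^T}$.
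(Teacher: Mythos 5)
Your proposal is correct and matches the paper's own (implicit) argument exactly: the corollary is obtained by specializing the preceding lemma to $\psi = \overline{\tau}$ (a ring homomorphism of $\Lambda$ by Yeliussizov's result) and $a = G_{\rho} \in \hat{\Lambda}$. Your verification of the two hypotheses is precisely the one-line justification the paper leaves unstated.
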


\begin{lemma}
We have $G_{\rho // \mu} = G_{\rho // \mu^T}.$
\end{lemma}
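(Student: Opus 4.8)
The plan is to mirror exactly the strategy used in the dual stable case (Theorem \ref{main result small g}), but now working inside $\Lambda$ rather than $\hat\Lambda$, using the skewing identity $g_\mu^\perp(G_\rho) = G_{\rho//\mu}$ together with the involution $\overline\tau\colon g_\lambda \mapsto g_{\lambda^T}$ and the subalgebra structure from Corollary \ref{closed for big G}. The goal is to show $G_\mu \in A$ for every $\mu$, where $A = \{f \in \Lambda : f^\perp(G_\rho) = \overline\tau(f)^\perp(G_\rho)\}$; once this is established, applying $f = g_\mu$ (note we want the skewing operator $g_\mu^\perp$) gives $G_{\rho//\mu} = g_\mu^\perp(G_\rho) = \overline\tau(g_\mu)^\perp(G_\rho) = g_{\mu^T}^\perp(G_\rho) = G_{\rho//\mu^T}$, which is the desired identity.

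First I would establish the base case: the elements $h_k$ (the complete homogeneous symmetric functions) lie in $A$. Since Corollary \ref{closed for big G} tells us $A$ is a subalgebra, and since the $h_\lambda = h_{\lambda_1}h_{\lambda_2}\cdots$ form a basis for $\Lambda$, membership of all the $h_k$ forces $A = \Lambda$, and in particular $g_\mu \in A$ for every partition $\mu$. To get the base case I would establish an analogue of Theorem \ref{true for all e_k}: namely $h_k^\perp(G_\rho) = \overline\tau(h_k)^\perp(G_\rho)$. The natural route is to expand $h_k$ as a (finite, since we are in $\Lambda$) linear combination of $g_{(1^n)}$'s — paralleling Proposition \ref{e as linear combo of G} and Lemma \ref{straight shape G}, but on the dual side — and then use the special case $G_{\rho//(k)} = G_{\rho//(1^k)}$ from Lemma \ref{big G for k} to convert $g_{(1^n)}^\perp(G_\rho)$ into $g_{(n)}^\perp(G_\rho)$ term by term, observing that $\overline\tau$ sends $g_{(1^n)} \mapsto g_{(n)}$.

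The main obstacle is getting the correct expansion of $h_k$ in terms of the $g_{(1^n)}$ (equivalently, understanding how $\overline\tau$ interacts with the generators we choose as our basis). In the $g$-case the paper used that $e_k$ expands into $G_{(1^n)}$'s with binomial coefficients, and that $\tau(G_{(1^n)}) = G_{(n)}$ combined with $G_{(1^n)}^\perp g_\rho = G_{(n)}^\perp g_\rho$. Here the dual statement requires knowing the expansion of the chosen generator (I expect $h_k = \sum_{n\ge k}\binom{n-1}{k-1} g_{(1^n)}$, the exact dual of Proposition \ref{e as linear combo of G}, since $g$ and $G$ are Hall-dual) and verifying that $\overline\tau$ acts on each $g_{(1^n)}$ by $g_{(1^n)} \mapsto g_{(n)}$. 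Pinning down this expansion and confirming it is a finite sum (so that no completion subtleties arise and Lemma \ref{perp distributes over infinite sums} is not even needed) is the delicate computational heart; everything else is a formal transcription of the $g$-argument.

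Concretely, the steps in order are: (i) prove $h_k = \sum_{n \ge k}\binom{n-1}{k-1} g_{(1^n)}$ by Hall-duality with Proposition \ref{e as linear combo of G}; (ii) compute $\overline\tau(h_k)$ using $\overline\tau(g_{(1^n)}) = g_{(n)}$; (iii) apply Lemma \ref{big G for k} in the form $g_{(1^n)}^\perp(G_\rho) = G_{\rho//(1^n)} = G_{\rho//(n)} = g_{(n)}^\perp(G_\rho)$ to deduce $h_k^\perp(G_\rho) = \overline\tau(h_k)^\perp(G_\rho)$, i.e. $h_k \in A$; (iv) invoke Corollary \ref{closed for big G} and the fact that the $h_\lambda$ span $\Lambda$ to conclude $A = \Lambda$, hence $g_\mu \in A$; and (v) unwind via $g_\mu^\perp(G_\rho) = G_{\rho//\mu}$ and $\overline\tau(g_\mu) = g_{\mu^T}$ to obtain $G_{\rho//\mu} = G_{\rho//\mu^T}$.
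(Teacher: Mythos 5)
Your overall architecture coincides with the paper's: show $h_k \in A$, invoke Corollary \ref{closed for big G} and the fact that the $h_\lambda$ span $\Lambda$ to get $A = \Lambda$, hence $g_\mu \in A$, then unwind via $g_\mu^{\perp}(G_\rho) = G_{\rho//\mu}$ and $\overline{\tau}(g_\mu) = g_{\mu^T}$. Your steps (iii)--(v) are fine. But step (i), which you yourself flag as the computational heart, fails: there is no expansion of $h_k$ as a linear combination of the $g_{(1^n)}$'s, finite or infinite. Indeed, a reverse plane partition of column shape $(1^n)$ occupies a single column, so its weight records only the \emph{distinct} entries; counting fillings with exactly $m$ distinct values gives $g_{(1^n)} = \sum_{m \le n} \binom{n-1}{m-1} e_m$. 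In particular every monomial appearing in every $g_{(1^n)}$ is squarefree (a product of distinct variables), hence the same is true of any well-defined linear combination of them, whereas $h_k$ contains $x_1^k$; so $h_k$ lies outside their span for all $k \ge 2$. Worse, your conjectured formula $h_k = \sum_{n\ge k}\binom{n-1}{k-1} g_{(1^n)}$ is not even a well-defined element of $\hat{\Lambda}$: substituting the expansion above, the coefficient of $e_m$ would be $\sum_{n}\binom{n-1}{k-1}\binom{n-1}{m-1}$, a divergent series. The error is in how you dualized Proposition \ref{e as linear combo of G}: passing from the $G$-side to the $g$-side should also swap columns for rows, not keep the column shapes $(1^n)$.

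The fix is exactly what the paper does, and it makes the base case trivial: $h_k = g_{(k)}$ identically. A reverse plane partition of the single-row shape $(k)$ is a weakly increasing filling $i_1 \le \cdots \le i_k$, and since the weight counts columns and each of the $k$ columns holds exactly one entry, its monomial is $x_{i_1}\cdots x_{i_k}$; summing over fillings gives $g_{(k)} = h_k$. (This is the genuine asymmetry between the two sides: $G_{(1^k)}$ is an honestly infinite series in the $e_n$'s, which is why the paper's $g$-case needed the binomial inversion of Proposition \ref{e as linear combo of G} and Lemma \ref{perp distributes over infinite sums}, whereas on the dual side the row shape already equals $h_k$ on the nose, so no completion issues arise.) With this observation your base case becomes one line,
\[
h_k^{\perp}(G_\rho) = g_{(k)}^{\perp}(G_\rho) = G_{\rho//(k)} = G_{\rho//(1^k)} = g_{(1^k)}^{\perp}(G_\rho) = \overline{\tau}(g_{(k)})^{\perp}(G_\rho) = \overline{\tau}(h_k)^{\perp}(G_\rho),
\]
using Lemma \ref{big G for k} in the middle, and the remainder of your argument goes through unchanged.
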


\begin{proof}

Let $A = \{f \in \Lambda : f^{\perp}(G_{\rho}) = \overline{\tau}(f)^{\perp}(G_{\rho})\}.$ The polynomials $g_{(k)}$ are elements of $A,$ since \[g_{(k)}^{\perp}(G_{\rho}) = G_{\rho // (k)} = G_{\rho // (1^k)} = g_{(1^k)}^{\perp}(G_{\rho}) = \overline{\tau}(g_{(k)})^{\perp}(G_{\rho}).\]

By definition, \[
g_{(k)} = \sum_P x^P,
\]
summed over reverse plane partitions $P$ of shape $(k)$. For a given reverse plane partition $P,$ the horizontal strip of length $k$ is filled with numbers $i_1 \leq \cdots \leq i_k.$

\begin{center}
    \ytableausetup{notabloids}
    \begin{ytableau}
    i_1 & i_2 & \cdots & i_k
\end{ytableau}
\end{center}

Then, since each number appears once in each column,
\[
g_{(k)} = \sum_P x^P = \sum_{i_1 \leq \cdots \leq i_k} x_{i_1} \cdots x_{i_k}  = h_k.
\]

Now, since $g_{(k)} = h_k,$ we have that $h_k \in A.$ The set $A$ is closed under addition and multiplication by Lemma \ref{closed for big G}, so this means $h_{\lambda} = h_{\lambda_1} h_{\lambda_2} \cdots h_{\lambda_i} \in A.$ Since the $h_{\lambda}$ form a basis for $\Lambda,$ any symmetric function $f = \sum a_{\lambda}h_{\lambda}$ is in $A$ as well. In particular, $g_{\mu} \in A$ for any partition $\mu$. Therefore, 

\begin{align*}
    G_{\rho // \mu} &= g_{\mu}^{\perp}(G_{\rho}) \\
    &= \overline{\tau}(g_{\mu})^{\perp} (G_{\rho}) \\
    &= g_{\mu^T}^{\perp}(G_{\rho}) \\
    &= G_{\rho // \mu^T}.
\end{align*}

\vspace*{-1.5\baselineskip}
\end{proof}

Lastly, we can use these results from the Hopf algebraic structure of $\Lambda$ to prove Theorem \ref{main result big g}.

\begin{proof}

Combining all the above results, we have
\begin{align*}
    G_{\rho / \mu} &= \sum_{\sigma \subseteq \mu} G_{\rho // \sigma} \\
    &= \sum_{\sigma \subseteq \mu} G_{\rho // \sigma^T} \\
     &= \sum_{\sigma^T \subseteq \mu^T} G_{\rho // \sigma^T} \\
     &= G_{\rho / \mu^T}.
\end{align*}

\vspace*{-1.5\baselineskip}
\end{proof}

\section{Acknowledgements}
We would like to thank Adela (YiYu) Zhang for guiding us on our research. We would also like to thank Professor Darij Grinberg for proposing the project and providing helpful suggestions. Finally, we would like to thank the MIT PRIMES-USA program, under which this research was conducted.

\end{document}